\newtheorem{lemma}{Lemma}[section]
\newtheorem{proposition}{Proposition}[section]
\newtheorem{thm}{Theorem}[section]
\newtheorem{remark}{Remark}[section]
\def\P{{\mathbb P}}
\def\ind{\mathds{1}}
\def\text#1{\mbox{\rm #1}}
\DeclarePairedDelimiter{\ceil}{\lceil}{\rceil}
\newcommand{\floor}[1]{{\left\lfloor {#1}\right\rfloor}}
\newcommand{\argmin}{\mathop{\rm argmin}}
\newcommand{\norm}[1]{\|{#1} \|}
\newcommand{\wh}{\widehat}
\newcommand{\wt}{\widetilde}
\newcommand{\wb}{\overline}
\newcommand{\iprod}[2]{\Bigl \langle #1, #2 \Bigr\rangle}
\newcommand{\T}{{\sf T}}
\newcommand{\cF}{\mathcal{F}}
\newcommand{\reals}{\mathbb{R}}
\newcommand{\E}{\mathbb{E}}
\newcommand{\Z}{\mathbb{Z}}
\let\hat\widehat
\let\bar\overline
\newcommand\chzM[1]{{#1}}
\begin{document}

\setlength{\abovedisplayskip}{5pt}
\setlength{\belowdisplayskip}{5pt}
\setlength{\abovedisplayshortskip}{5pt}
\setlength{\belowdisplayshortskip}{5pt}

\title{\LARGE On Estimation of Isotonic Piecewise Constant Signals}

\author{Chao Gao\thanks{Department of Statistics, University of Chicago, Chicago, IL 60637, USA; email: {\tt chaogao@galton.uchicago.edu}},~~Fang Han\thanks{Department of Statistics, University of Washington, Seattle, WA 98195, USA; e-mail: {\tt fanghan@uw.edu}}, ~and~Cun-Hui Zhang\thanks{Department of Statistics and Biostatistics, Rutgers University, Piscataway, NJ 08854, USA; e-mail: {\tt cunhui@stat.rutgers.edu}} }

\date{}

\maketitle

\begin{abstract}
Consider a sequence of real data points $X_1,\ldots, X_n$ with underlying means $\theta^*_1,\dots,\theta^*_n$.
This paper starts from studying the setting that $\theta^*_i$ is both piecewise constant and monotone 
as a function of the index $i$. For this, we establish the exact minimax rate of estimating such monotone functions, and thus give a non-trivial answer to an open problem in the shape-constrained analysis literature. The minimax rate under the {loss of the sum of squared errors} involves an interesting iterated logarithmic dependence on the dimension, a phenomenon that is revealed through characterizing the interplay between the isotonic shape constraint and model selection complexity. We then develop a penalized least-squares procedure for estimating the vector $\theta^*=(\theta^*_1,\dots,\theta^*_n)^{\T}$. 
This estimator is shown to achieve the derived minimax rate adaptively. 
For the proposed estimator, we further allow the model to be misspecified and derive oracle inequalities with the optimal rates, and show there exists a computationally efficient algorithm to compute the exact solution. 
\end{abstract}

{\bf Keywords:} isotonic piecewise constant function, reduced isotonic regression, iterated logarithmic dependence, adaptive estimation, oracle inequalities.

\section{Introduction}\label{sec:intro}

Consider an observed vector $X=(X_1,...,X_n)^{\T}$ of independent entries and an unknown underlying mean $\theta^*=(\theta_1^*,...,\theta_n^*)^{\T}$. This paper starts from the problem of estimating such $\theta^*$ that is isotonic piecewise constant.
Specifically, for any $ k\in (0:n]$, we define the parameter space of interest 
\chzM{as the set of all nondecreasing vectors with at most $k$ pieces,}  
\begin{align*}
\Theta_k^{\uparrow} =& \Big\{\theta\in\mathbb{R}^n: \text{there exist }\{a_j\}_{j=0}^k\text{ and }\{\mu_j\}_{j=1}^k\text{ such that }\\
&~~0=a_0\leq a_1\leq\cdots\leq a_k=n,\\ 
&~~\mu_1\leq\mu_2\leq\cdots\leq\mu_k, \text{ and }\theta_i=\mu_j\text{ for all }i\in(a_{j-1}:a_j]\Big\}.
\end{align*}
The notation $(a:b]$ stands for the set of all integers $i$ that satisfy $a<i\leq b$.
For any vector $\theta^*\in\Theta_k^{\uparrow}$, it is a piecewise constant signal with at most $k$ steps that take different values. When $k=n$, the space $\Theta_k^{\uparrow}$ contains all vectors $\theta^*$ that satisfy $\theta^*_1\leq\theta^*_2\leq\cdots\leq\theta^*_n$. Estimation of $\theta^*$ under this condition is recognized as isotonic regression. It has been one of the most popular and successful directions in the shape-constrained analysis literature. General discussions on relevant methods and theory 
can be found 
in \cite{robertson1988order}, \cite{groeneboom1992information}, \cite{silvapulle2011constrained}, and \cite{groeneboom2014nonparametric}, to name just a few. However, in certain cases, isotonic regression may overfit the data by producing a result with too many steps. This inspires research on fitting isotonic regression with the restriction of the number of steps. According to \cite{schell1997reduced}, the problem is termed as reduced isotonic regression. The parameter space $\Theta_k^{\uparrow}$ precisely describes such regression functions.

Despite its practical importance in change-point and shape-constrained analyses, the fundamental limit of estimating $\theta^*$ in the class $\Theta_k^{\uparrow}$ is still unknown. We summarize the results in the literature by assuming that $X\sim N(\theta^*,\sigma^2I_n)$. In terms of upper bound, \cite{chatterjee2015risk} show explicitly that
$$\inf_{\wh{\theta}}\sup_{\theta^*\in\Theta_k^{\uparrow}}\mathbb{E}\|\wh{\theta}-\theta^*\|^2\leq C\sigma^2 k\log(en/k),$$
and the rate $\sigma^2 k\log(en/k)$ can be adaptively achieved by isotonic regression. See \cite{bellec2015sharp} and \cite{bellec2015sharp2} for results with the same rate. In terms of lower bound, \cite{bellec2015sharp2} show
$$\inf_{\wh{\theta}}\sup_{\theta^*\in\Theta_k^{\uparrow}}\mathbb{E}\|\wh{\theta}-\theta^*\|^2\geq c\sigma^2k.$$
We can see the above upper and lower bounds do not match, and it is unclear if either bound is sharp. 

In this paper, we settle a solution to this open problem by deriving the precise minimax rate of the space $\Theta_k^{\uparrow}$. Thus, the gap between the upper and lower bounds in the literature is closed. Surprisingly, neither the upper nor the lower bound in the literature is sharp. We prove that for $k\geq 2$, the minimax rate takes the form
$$\inf_{\wh{\theta}}\sup_{\theta^*\in\Theta_k^{\uparrow}}\mathbb{E}\|\wh{\theta}-\theta^*\|^2\asymp \sigma^2 k\log\log(16n/k).$$
It is interesting that the minimax rate of the problem has an iterated logarithmic dependence on $n/k$, an engaging 
feature of the space $\Theta_k^{\uparrow}$. 

We show that the minimax rate can be achieved by solving a least-squares problem in the space $\Theta_k^{\uparrow}$. This is exactly the procedure of reduced isotonic regression. In comparison, the ordinary isotonic regression proves to achieve only a sub-optimal rate $\sigma^2k\log(en/k)$. Therefore, our results provide a theoretical justification that the reduced isotonic regression can avoid overfitting the data and practically attain better performances over 
the ordinary isotonic regression (cf. \cite{schell1997reduced}, \cite{salanti2003nonparametric}, \cite{haiminen2008algorithms}). 

The proof of the result is non-trivial.  Our analysis involves repeatedly partitioning the studied sequence according to the nature of the reduced isotonic regression estimator. This allows us to use martingale maximal inequalities by Levy and Doob, and gives us the sharp minimax rate.

Besides understanding the fundamental challenge in estimating the piecewise monotone functions, in practice, it is always the case that: (i) the number of steps or pieces $k$ is unknown; (ii) the model could be misspecified. In addition, practically we would love to have computationally feasible algorithm to compute the exact solution. Indeed, in this manuscript we propose a penalized least-squares (reduced isotonic regression) estimator that achieves the minimax rate without knowing $k$. We further allow the model to be misspecified and prove oracle inequalities with the optimal rates.  Moreover, by exploring a key property of reduced isotonic regression and by leveraging the pool-adjacent-violators algorithm (PAVA) 
\citep{mair2009isotone}, 
we develop a computationally efficient algorithm to compute 
the $k$-piece least-squares estimator for all $k$ and thus
the penalized least-squares estimator.

This paper also obtains exact minimax rates under the $\ell_p$ loss with $1\leq p<2$. In contrast to the case $p = 2$, the minimax rates are now parametric. Furthermore, we show that this rate can be adaptively achieved by isotonic regression, but not by the reduced isotonic regression procedure. In other words, the nature of the problem can be {dramatically} changed by using a different loss function.

The rest of the paper is organized as follows. In Section \ref{sec:upper}, we introduce the problem setting and present the minimax rate. We then introduce an adaptive estimation procedure in Section \ref{sec:adapt}. The computational issues of the estimators are discussed in Section \ref{sec:disc}. We will also put our results in a larger picture and discuss a few other related problems in Section \ref{sec:disc}. All the proofs are relegated to Section \ref{sec:proof} and the supplement.

\paragraph{Notation}
Let $\Z$ and $\reals$ be the sets of integers and real numbers. For any positive integer $d$, we use $[d]$ to denote the set $\{1,2,...,d\}$. Let $\ind(\cdot)$ denote the indicator function. For a real number $x$, $\ceil{x}$ is the smallest integer no smaller than $x$, $\floor{x}$ is the largest integer no larger than $x$, $x_+=x\ind(x\geq 0)$ and $x_-=-x\ind(x<0)$ are the positive and negative components of $x$. For any $a,b\in\reals$, write $a\wedge b=\min\{a,b\}$ and $a\vee b=\max\{a,b\}$. For an arbitrary vector $\theta=(\theta_1,\ldots,\theta_n)^\T\in\reals^n$ and an index set $J\subset [n]$, we denote $\theta_J$ to be the sub-vector of $\theta$ with entries indexed by $J$, and for any $p\geq 1$,
\[
\norm{\theta}_p=\Big(\sum_{i=1}^n|\theta_i|^p \Big)^{1/p},~~{\rm and}~~\norm{\theta}_{J,p}=\Big(\sum_{i\in J}\theta_i^p \Big)^{1/p}.
\]
In particular, we denote $\norm{\theta}=\norm{\theta}_2$ and $\norm{\theta}_J=\norm{\theta}_{J,2}$. Let $\overline{\theta}_{J}=\frac{1}{|J|}\sum_{i\in J}\theta_i$ represent the sample mean across the sequence $\theta_{J}$. For any real value $a$ and positive integer $n$, define 
\[
\{a\}^n=(\underbrace{a,a,\ldots,a}_{n})^\T. 
\]
For any sets of vectors $\Theta_1\subset \reals^{n_1},\ldots,\Theta_m\subset \reals^{n_m}$, denote
\[
\bigtimes_{\ell=1}^m\Theta_{\ell} = \Big\{\theta=(\theta_{(1)}^\T,\ldots,\theta_{(m)}^\T)^\T\in\reals^{\sum_{i=1}^m n_i}:\theta_{(\ell)}\in\Theta_\ell\Big\}.
\]
Throughout the paper, let $c, C, c_1, C_1,  c_2, C_2,\ldots$ be generic universal positive constants whose actual values may vary at different places. For any two positive data sequences $\{a_n, n=1,2,\ldots\}$ and $\{b_n,n=1,2,\ldots\}$, we write $a_n\lesssim b_n$ or $a_n=O(b_n)$ if there exists a constant $C>0$ such that $a_n\leq C b_n$ for all $n$ from natural numbers. The notation $a_n\asymp b_n$ means $a_n\lesssim b_n$ and $b_n\lesssim a_n$. We use $\mathbb{P}$ and $\mathbb{E}$ to denote generic probability and expectation operations whenever the distributions can be determined from the context.

\section{Minimax rates}\label{sec:upper}

In this section, we present the minimax rate of the space $\Theta_{k}^{\uparrow}$ with respect to the squared $\ell_2$ loss. We first consider the upper bound. Given the observation $X\in \mathbb{R}^n$, we define the constrained least-squares estimator as
\begin{equation}
\wh{\theta}(\Theta_{k}^{\uparrow})=\argmin_{\theta\in \Theta_{k}^{\uparrow}}\|X-\theta\|^2.\label{eq:ls}
\end{equation}
Computational issues related to this estimator will be discussed in Section \ref{sec:compute}. Note that if $X\sim N(\theta^*,\sigma^2I_n)$, $\wh{\theta}(\Theta_{k}^{\uparrow})$ is simply the maximum likelihood estimator (MLE) restricted onto the parameter space $\Theta_{k}^{\uparrow}$. However, we do not need to assume a Gaussian error for the risk bound presented below.
In detail, consider the observation 
\[
X=\theta^*+Z,
\]
where we assume the error variables $\{Z_i\}_{i=1}^n$ are independent with zero mean and satisfy one of the following conditions,
\begin{equation}
\begin{cases} \max_{1\leq i\leq n}\mathbb{E}\Bigl|Z_i/\sigma\Bigr|^{2+\epsilon}\leq C_1, 
& \hbox{not identically distributed } Z_i\hbox{'s},
\cr \mathbb{E}(Z_1^2/\sigma^2)\log(e+Z_1^2/\sigma^2) \le C_1, & \hbox{identically distributed } Z_i\hbox{'s},
\end{cases}
\label{eq:moment}
\end{equation}
for some number $\sigma>0$, 
an arbitrarily small universal constant $\epsilon\in(0,1)$, and some universal constant $C_1>0$. 
It is easy to see that the Gaussian error $Z\sim N(0,\sigma^2 I_n)$ is a special case.

\begin{thm} \label{thm:upper2}
Consider $X=\theta^*+Z$ with any $\theta^*\in\mathbb{R}^n$ and $Z$ satisfying (\ref{eq:moment}).
Then, we have
$$\mathbb{E}\|\wh{\theta}(\Theta_k^{\uparrow})-\theta^*\|^2\leq C\Big[\inf_{\theta\in\Theta_k^{\uparrow}}\|\theta-\theta^*\|^2 + \sigma^2 + \sigma^2k\log\log(16n/k)\ind\{k\geq 2\}\Big]$$
for all $k\in[n]$ with some universal constant $C>0$.
\end{thm}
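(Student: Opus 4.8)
The plan is to prove the oracle inequality by a two-stage argument. First, a standard ``basic inequality'' for the constrained least-squares estimator: since $\wh\theta(\Theta_k^\uparrow)$ minimizes $\|X-\theta\|^2$ over $\Theta_k^\uparrow$, comparing it to the $\ell_2$-projection $\theta^o$ of $\theta^*$ onto $\Theta_k^\uparrow$ gives, after expanding the squares,
\[
\|\wh\theta - \theta^*\|^2 \le \|\theta^o - \theta^*\|^2 + 2\iprod{Z}{\wh\theta - \theta^o}.
\]
So it suffices to control $\E\, \sup_{\theta\in\Theta_k^\uparrow}\iprod{Z}{\theta-\theta^o}/\|\theta-\theta^o\|$ and feed the resulting bound through the usual ``$2ab\le \delta a^2 + \delta^{-1}b^2$'' peeling step; this reduces the theorem to showing that a suitable localized Gaussian-type complexity of $\Theta_k^\uparrow$ is $O\pth{\sigma^2 + \sigma^2 k\log\log(16n/k)\ind\{k\ge2\}}$. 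Since $\Theta_k^\uparrow$ is a union over all choices of the $k$ breakpoints $0=a_0\le\cdots\le a_k=n$ of a product of $k$ monotone cones (one isotonic cone on each block $(a_{j-1}:a_j]$), with the additional inter-block ordering constraint only shrinking the set, the complexity is at most that of $\bigtimes_{j=1}^k \cM_{n_j}$ where $\cM_{n_j}$ is the monotone cone of dimension $n_j=a_j-a_{j-1}$, maximized over block-size profiles $(n_1,\dots,n_k)$ with $\sum n_j=n$.

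Next, the heart of the matter: for a single block of length $m$, the empirical-process increment $\iprod{Z}{\theta-\theta^o}$ over the monotone cone does \emph{not} have a clean worst-case bound of order $\sigma^2\log\log m$ uniformly — the $\sigma^2 k\log(en/k)$ literature bound comes precisely from bounding each block's statistic by $\sigma^2\log(en_j)$ and summing. To get the iterated logarithm one has to exploit the isotonic structure more carefully: following the strategy flagged in the introduction, I would decompose the projection residual on each block dyadically, partitioning the block recursively according to where $\wh\theta$ takes its constant values, and then bound the contributions of the $2^\ell$ subblocks at level $\ell$ by a maximal inequality. The key observation is that on each level the relevant quantity is a maximum over partial sums of $Z$ of a \emph{normalized} fluctuation (a block average minus a running average), and Doob's $L^2$ maximal inequality / Lévy's inequality for martingales bounds the maximum of $r$ such normalized partial-sum functionals by $O(\sigma^2)$ rather than $O(\sigma^2\log r)$; summing $O(\log m)$ levels then produces the single extra $\log\log m$ factor. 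Turning the Gaussian heuristic into a genuine bound under the weak moment conditions \eqref{eq:moment} requires the martingale maximal inequalities to be applied to truncated/symmetrized versions of the $Z_i$, handling the residual tail via the $(2+\epsilon)$-th moment (or the $L\log L$ moment in the i.i.d.\ case), which is exactly why those two regimes appear.

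Finally I would aggregate over blocks and over the choice of breakpoints. Summing the per-block bounds gives $\sum_{j=1}^k O(\sigma^2 + \sigma^2\log\log(n_j))$; bounding $\sum_j \log\log(n_j)$ under $\sum_j n_j = n$ by $k\log\log(en/k)$ via concavity of $\log\log$ (which is where the $n/k$, rather than $n$, enters, and where the $\log\log(16n/k)$ form with the constant $16$ is arranged so the bound is vacuously valid when $k=n$) yields the $\sigma^2 k\log\log(16n/k)$ term; the lone $\sigma^2$ absorbs the $k=1$ case and low-order terms; and the supremum over breakpoint configurations is free because the per-block bounds depend only on block \emph{lengths}, which is the reason a naive union bound over the $\binom{n}{k}$ partitions — which would reintroduce a $\log\binom{n}{k}\asymp k\log(en/k)$ factor — is \emph{not} needed. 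I expect the main obstacle to be the second stage: setting up the recursive dyadic partition of each block so that the martingale maximal inequalities apply cleanly at every level, and carefully tracking that the number of ``active'' levels is $O(\log m)$ while each level costs only $O(\sigma^2)$, all under the non-identically-distributed weak-moment assumption rather than Gaussianity.
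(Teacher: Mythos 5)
Your opening move (the basic inequality against the projection $\theta^{(k)}$ of $\theta^*$ onto $\Theta_k^{\uparrow}$, a dyadic decomposition inside each oracle block, and Doob/L\'evy maximal inequalities applied to truncated errors to handle the weak moment condition \eqref{eq:moment}) is indeed the paper's strategy. But the step you lean on to make this work is not valid: you reduce the theorem to ``a localized Gaussian-type complexity of $\Theta_k^{\uparrow}$ of order $\sigma^2k\log\log(16n/k)$,'' and you assert that the supremum over breakpoint configurations is ``free because the per-block bounds depend only on block lengths.'' $\Theta_k^{\uparrow}$ is a non-convex union over breakpoint configurations, and expectation and supremum over that union cannot be exchanged; bounding each configuration separately and taking the worst block-length profile does not bound the expected supremum over the union. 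This is precisely the crux of the problem --- entropy and (local) Gaussian-width arguments of this type are what the literature used and they only give the sub-optimal $\sigma^2k\log(en/k)$, as the paper notes. The whole point of the actual proof is to avoid ever paying for the random positions of the estimated knots, and that requires a structural fact about the estimator, not a complexity bound for the parameter set.

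Concretely, two ingredients are missing, and your stated mechanism for the iterated logarithm does not hold up. First, the claim that ``Doob's $L^2$ maximal inequality / L\'evy's inequality bounds the maximum of $r$ such normalized partial-sum functionals by $O(\sigma^2)$ rather than $O(\sigma^2\log r)$'' is false: Doob controls the running maximum of a \emph{single} martingale, not the maximum of $r$ distinct standardized functionals; and even granting $O(\sigma^2)$ per dyadic level, summing the $O(\log n_j)$ levels of a block gives $\sigma^2\log n_j$, i.e.\ the sub-optimal $k\log(en/k)$ again --- it does not produce a $\log\log$ factor. In the paper the fixed-endpoint structure comes from Lemma~\ref{lem:cun-hui} (the fitted value at an estimated knot is sandwiched between data averages anchored at the \emph{oracle} block endpoints, see \eqref{eq:cunhui's-magic}), and the iterated logarithm comes from a selection argument: a (block, dyadic-scale) pair contributes only if it captures one of the at most $k$ estimated knots (the indicators $\delta_{\pm}$ in \eqref{eq:def+2}, \eqref{eq:def-2}), so at most $k$ of the $m\asymp\sum_j\log_2 n_j\asymp k\log(en/k)$ pairs are active, and a union bound over the possible active sets with sub-exponential tails (after the double truncation) costs $\sigma^2 k\log(em/k)\asymp\sigma^2k\log\log(16n/k)$; the $\log\log$ is the logarithm of the \emph{number} of dyadic scales, entering through this union bound, not through ``$\log m$ levels each costing $O(\sigma^2)$.'' Without identifying the sandwich property and this knot-counting/union-bound mechanism, the proposal cannot reach the claimed rate. (A lesser omission: in the misspecified case the bias terms $|\wb{\theta}^*_{(\wh{a}_{h-1}:b_h'']}-\theta^{(k)}_{\wh{a}_h}|$ must be shown to contribute only a constant multiple of $\|\theta^{(k)}-\theta^*\|^2$, which requires the weighting argument around \eqref{eq:crazy-magic}; your outline does not address this.)
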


Note that Theorem \ref{thm:upper2} is an oracle inequality without any assumption on the true mean vector $\theta^*$.
Besides the trivial bound $C\left(\inf_{\theta\in\Theta_1^{\uparrow}}\|\theta-\theta^*\|^2+\sigma^2\right)$ for $k=1$, it is interesting that the stochastic error scales as $\sigma^2k\log\log(16n/k)$ for $k\geq 2$. This iterated logarithmic term appears due to the isotonic constraint of the solution $\wh\theta(\Theta_k^\uparrow)$ as well as the properties of partial sum processes. More technical discussions on this point will be given in Section \ref{sec:compare-piece}, which discusses the importance of the isotonic constraint in more details. 

If the condition $\theta^*\in\Theta_k^\uparrow$ holds, then we immediately obtain the following corollary
$$\sup_{\theta^*\in\Theta_k^\uparrow}\mathbb{E}\|\wh{\theta}(\Theta_k^{\uparrow})-\theta^*\|^2\leq C\sigma^2k\log\log(16n/k),$$
when $k\geq 2$. This improves previous risk bounds for the space $\theta^*\in\Theta_{k}^{\uparrow}$ in the literature. 
For example, 
for the ordinary isotonic regression estimator
\begin{equation}
\wh{\theta}^{(iso)}=\wh{\theta}(\Theta_{n}^{\uparrow})=\argmin_{\theta:\theta_1\leq\theta_2\leq\cdots\leq\theta_n}\|X-\theta\|^2,\label{eq:iso-def}
\end{equation}
Theorem 2.1 of \cite{zhang2002risk} gives
$$
\sum_{i=n_1+1}^{n_2}\big|\wh{\theta}^{(iso)}_i-\theta^*_i\big|^2 \le \int_0^{n_2-n_1} 
\frac{C\sigma^2}{1\vee x}dx, 
$$
whenever $0\le n_1< n_2\le n$ and $\theta^*_{n_2} = \theta^*_{n_1+1}$ for a nondecreasing $\theta^*$.
Thus, as explicitly derived in \cite{chatterjee2015risk}, 
$$\sup_{\theta^*\in\Theta_{k}^{\uparrow}}\mathbb{E}\norm{\wh{\theta}^{(iso)}-\theta^*}^2\leq C\sigma^2 k\log(en/k).$$
Our result shows that the logarithmic error term in 
the above bound can be improved by restricting the least-squares optimization to the space $\theta^*\in\Theta_{k}^{\uparrow}$. 
This modification of the estimator is necessary, as shown below. 
\begin{proposition}\label{prop:lower-LSE}
There exists a universal constant $c>0$, such that
$$\sup_{\theta^*\in\Theta_{k}^{\uparrow}}\mathbb{E}\norm{\wh{\theta}^{(iso)}-\theta^*}^2\geq c\sigma^2k\log(en/k).$$
\end{proposition}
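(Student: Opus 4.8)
The plan is to exhibit a single nondecreasing $k$-piece signal on which ordinary isotonic regression is forced to pay the classical logarithmic price separately on each of $k$ pieces. Work in the Gaussian model $X\sim N(\theta^*,\sigma^2 I_n)$, the relevant special case of \eqref{eq:moment}. Partition $[n]$ into $k$ consecutive blocks $B_1,\dots,B_k$, each of cardinality between $m:=\floor{n/k}$ and $m+1$, and for a parameter $h>0$ let $\theta^{*(h)}$ equal $bh$ on $B_b$; then $\theta^{*(h)}\in\Theta_k^{\uparrow}$. The first step is an elementary decoupling observation: once $h>\max_i Z_i-\min_i Z_i$, every entry of $X$ on $B_{b+1}$ strictly exceeds every entry on $B_b$, so the concatenation of the within-block isotonic regressions of $X_{B_1},\dots,X_{B_k}$ is already monotone (its value on $B_b$ never exceeds $\max X_{B_b}<\min X_{B_{b+1}}\le$ its value on $B_{b+1}$) and is optimal even without the across-block monotonicity constraints; hence by uniqueness of the projection it equals $\wh\theta^{(iso)}(X)$. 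By translation equivariance of isotonic regression, on this event $\|\wh\theta^{(iso)}-\theta^{*(h)}\|^2=Q:=\sum_{b=1}^k\|\wh\theta^{(iso)}(Z_{B_b})\|^2$, a quantity that does not depend on $h$.

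The second step is to let $h\to\infty$. Since $\max_i Z_i-\min_i Z_i<\infty$ almost surely, the nonnegative variables $\|\wh\theta^{(iso)}-\theta^{*(h)}\|^2$ converge almost surely to $Q$, so Fatou's lemma gives $\sup_{h>0}\E\|\wh\theta^{(iso)}-\theta^{*(h)}\|^2\ge\liminf_{h\to\infty}\E\|\wh\theta^{(iso)}-\theta^{*(h)}\|^2\ge\E[Q]$, with no domination needed. As $Z_{B_1},\dots,Z_{B_k}$ are independent, $\E[Q]=\sum_{b=1}^k\E\|\wh\theta^{(iso)}(Z_{B_b})\|^2$, which reduces everything to the one-block statement: for $\ell$ i.i.d.\ $N(0,\sigma^2)$ observations with constant (say zero) mean, $\E\|\wh\theta^{(iso)}\|^2\asymp\sigma^2\log\ell$ --- in fact it equals $\sigma^2\sum_{i=1}^\ell 1/i$, the statistical dimension of the monotone cone, equivalently $\sigma^2$ times the expected number of constant blocks of $\wh\theta^{(iso)}$. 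Since each $|B_b|\ge m$, this gives $\E[Q]\ge k\sigma^2\sum_{i=1}^m 1/i\ge c\,\sigma^2 k\log(en/k)$ after elementary manipulation using $\floor{n/k}\ge n/(2k)$, which is the claimed bound. (The block structure is essential: $\theta^*=0$ alone yields only $\gtrsim\sigma^2\log n$, missing the factor $k$.)

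The only input that is not routine is the one-block lower bound $\E\|\wh\theta^{(iso)}\|^2\gtrsim\sigma^2\log\ell$, and I expect this to be the single real obstacle; the block construction, the decoupling across well-separated blocks, and the limit $h\to\infty$ are all elementary. If a self-contained argument is preferred to a citation, I would use the identity $\E\|\wh\theta^{(iso)}\|^2=\sigma^2\,\E[\,\text{number of constant blocks of }\wh\theta^{(iso)}\,]$ (Stein's lemma together with the fact that the divergence of isotonic regression equals its number of level sets) and then bound the expected number of blocks below by a constant multiple of $\log\ell$; equivalently, this is the expected number of vertices of the greatest convex minorant of the partial-sum random walk $S_0,\dots,S_\ell$, whose order $\log\ell$ is a classical Sparre--Andersen-type fact.
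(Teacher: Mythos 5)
Your proposal follows essentially the same route as the paper's proof: the same $k$-block construction with widely separated levels, the same decoupling of the global isotonic fit into independent block-wise projections of the noise, and the same per-block bound $\E\|\Pi_{\mathcal{H}_m}Z\|^2\gtrsim\sigma^2\log(em)$ with $m\asymp n/k$ (the statistical dimension of the monotone cone, which the paper cites from \cite{amelunxen2014living} and you derive via the divergence/number-of-blocks identity). The only differences are cosmetic --- you send the separation $h\to\infty$ and invoke Fatou, whereas the paper fixes a large $\kappa$ and discards the small-probability event by Cauchy--Schwarz --- so the argument is correct and needs no changes.
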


Next, we show that the rate obtained by Theorem \ref{thm:upper2} is optimal by giving a matching minimax lower bound. To this end, we consider the Gaussian distribution $X\sim N(\theta^*,\sigma^2I_n)$. In the following a lower bound construction for $k=2$ is provided, with the generalization to $k\geq 2$ briefly sketched.

By Fano's inequality (Proposition \ref{prop:fano}), we need to find some subset $T\subset\Theta_2^{\uparrow}$ such that the ratio $$\frac{\max_{\theta,\theta'\in T}\|\theta-\theta'\|^2/(2\sigma^2)}{\log\mathcal{M}(\epsilon,T,\|\cdot\|)}$$ is bounded by a sufficiently small constant. Here, $\mathcal{M}(\epsilon,T,\|\cdot\|)$ stands for the packing number of $T$ with radius $\epsilon$ and distance $\|\cdot\|$. We will take $\epsilon^2\asymp\log\log (16n)$. Since the minimax rate is simply $\sigma^2$ if $n$ is bounded by a constant, we only need to construct $T$ with a sufficiently large $n$. For each $\ell\in\{1,2,...,\ceil{\log_2n}\}$, construct the vector $\theta_\ell\in\mathbb{R}^n$ by filling the last $\ceil{n2^{-\ell}}$ entires with $\sqrt{\alpha\sigma^2 2^\ell\log\log_2 n/n}$ and the remaining entries $0$. It is easy to see that $\theta_\ell\in\Theta_2^{\uparrow}$ for all $\ell\in\{1,2,...,\ceil{\log_2n}\}$. For any $j<\ell$, we have
\begin{eqnarray*}
\norm{\theta_\ell-\theta_j}^2 &\geq& \ceil{n2^{-\ell}}\Bigl(\sqrt{\frac{\alpha\sigma^2 2^\ell\log\log_2 n}{n}}-\sqrt{\frac{\alpha\sigma^2 2^j\log\log_2 n}{n}}\Bigr)^2 \\
&\geq& \alpha\sigma^2\log\log_2n\Bigl(1-2^{\frac{j-\ell}{2}}\Bigr)^2 \\
&\geq& \frac{\alpha\sigma^2}{20}\log\log_2n.
\end{eqnarray*}
Therefore,
\begin{equation}
\log\mathcal{M}\Bigl(\sqrt{\frac{\alpha\sigma^2}{20}\log\log_2n},T,\norm{\cdot}\Bigr)\geq \log\log_2n, \label{eq:packing}
\end{equation}
where $T=\big\{\theta_\ell:\ell=1,2,...,\ceil{\log_2n}\big\}$. Moreover, since $\norm{\theta_\ell}^2\leq 3\alpha\sigma^2\log\log_2n$ for all $\ell$, we have
\begin{equation}
\max_{\theta,\theta'\in T}\frac{1}{2\sigma^2}\norm{\theta-\theta'}^2\leq 6\alpha\log\log_2n. \label{eq:KL}
\end{equation}
Hence, by (\ref{eq:packing}) and (\ref{eq:KL}), we can choose a very small $\alpha>0$ to ensure the ratio $\frac{\max_{\theta,\theta'\in T}\|\theta-\theta'\|^2/(2\sigma^2)}{\log\mathcal{M}(\epsilon,T,\|\cdot\|)}$ to be small. This leads to the minimax lower bound
$$\inf_{\wh{\theta}}\sup_{\theta^*\in\Theta_2^{\uparrow}}\mathbb{E}\|\wh{\theta}-\theta\|^2\geq c\sigma^2\log\log (16n),$$
for $k=2$.

For a general $k>2$, the idea is to divide the integer set $[n]$ into $\ceil{k/2}-1$ consecutive intervals with length approximately $\floor{2n/k}$. Then, we can apply the above construction to each of the $\ceil{k/2}-1$ interval. For each interval, a lower bound $c\sigma^2\log\log(2n/k)$ is obtained. Summing up these lower bounds over all the $k/2$ intervals, we get the desired rate. Details of this argument will be given in Section \ref{sec:proof-lower}, and the according minimax lower bound is presented as follows.

\begin{thm} \label{thm:lower}
There exists some universal constant $c>0$, such that
$$\inf_{\wh{\theta}}\sup_{\theta^*\in\Theta_{k}^{\uparrow}}\mathbb{E}\|\wh{\theta}-\theta^*\|^2\geq \begin{cases}
c\sigma^2, & k=1, \\
c\sigma^2k\log\log(16n/k), & k\geq 2,
\end{cases}$$
where the infimum is taken over all measurable functions  of $X$ 
and the expectation is taken under which $X\sim N(\theta^*,\sigma^2 I_n)$.
\end{thm}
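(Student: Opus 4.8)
The plan is to treat $k=1$ with a one-line two-point argument and, for $k\ge 2$, to build a global construction out of $\asymp k/2$ independent copies of the $k=2$ construction already displayed above, reducing the general statement to that case through an ``independent blocks'' lower bound. For $k=1$, take the pair $\{0\}^n,\{\delta\}^n\in\Theta_1^{\uparrow}$ with $\delta=\sigma/\sqrt n$: then $\mathrm{KL}\big(N(\{0\}^n,\sigma^2I_n)\,\|\,N(\{\delta\}^n,\sigma^2I_n)\big)=n\delta^2/(2\sigma^2)=1/2$ while $\|\{0\}^n-\{\delta\}^n\|^2=\sigma^2$, so Le Cam's two-point bound gives $\inf_{\wh\theta}\sup_{\theta^*\in\Theta_1^{\uparrow}}\Expect\|\wh\theta-\theta^*\|^2\ge c\sigma^2$. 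The same two-point bound, applied inside a block of length $N$ (using $\{0\}^N$ versus the vector that is $0$ on the first half and $\sigma/\sqrt N$ on the second half, which has two pieces), produces a per-block floor of $c\sigma^2$, which will be the fallback in the regime where $n/k$ is bounded and $\log\log(16n/k)\asymp 1$.

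For general $k\ge 2$, I would partition $[n]$ into $m=\lceil k/2\rceil-1$ consecutive intervals $I_1,\dots,I_m$ of (nearly equal) length $N\asymp 2n/k$, noting $2m\le k-1$. On each $I_j$ place the $k=2$ construction from the paragraph preceding Theorem~\ref{thm:lower}: a family $T_j\subset\reals^{|I_j|}$ of $\lceil\log_2 N\rceil$ vectors that is $\epsilon_j$-separated with $\epsilon_j^2\asymp\sigma^2\log\log N$ and obeys $\max_{u,v\in T_j}\|u-v\|^2/(2\sigma^2)\le 6\alpha\log\log_2 N$. Then translate the $j$-th block by the deterministic vector $c_j\{1\}^{|I_j|}$, with $c_j=(j-1)M$ and $M$ an upper bound for every height occurring in every $T_j$. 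This translation makes the concatenated vector nondecreasing while leaving two pieces per block, so $\bigtimes_{j=1}^m\big(c_j\{1\}^{|I_j|}+T_j\big)\subset\Theta_{2m}^{\uparrow}\subset\Theta_k^{\uparrow}$; moreover translating by a constant vector changes neither the separations $\epsilon_j$ nor the $\mathrm{KL}$ numbers, since the observation model translates with it.

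The combination step is then a standard additive-risk lower bound. Equip $\bigtimes_j\big(c_j\{1\}^{|I_j|}+T_j\big)$ with the product prior $\pi=\bigotimes_j\mathrm{Unif}\big(c_j\{1\}^{|I_j|}+T_j\big)$. Because the squared loss is additive over blocks and the Gaussian noise is independent across coordinates, the posterior of the $j$-th block given all of $X$ depends only on $X_{I_j}$, so the Bayes risk of $\pi$ equals $\sum_{j=1}^m$ (Bayes risk of the uniform prior on block $j$, from $X_{I_j}$ alone). Each of these summands is bounded below by Fano's inequality (Proposition~\ref{prop:fano}) exactly as in the $k=2$ computation above: with $\mathcal{M}(\epsilon_j,T_j,\|\cdot\|)=\lceil\log_2 N\rceil$, the ratio $\big(\max_{u,v}\|u-v\|^2/(2\sigma^2)\big)/\log\mathcal{M}(\epsilon_j,T_j,\|\cdot\|)\le 6\alpha\log\log_2 N/\log\lceil\log_2 N\rceil$ is a small universal constant once $\alpha$ is chosen small, so the per-block risk is at least $c\epsilon_j^2\asymp c\sigma^2\log\log N$. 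Summing over the $m\asymp k/2$ blocks, and using $\log\log N\asymp\log\log(2n/k)\asymp\log\log(16n/k)$ (each quantity being bounded below by a positive constant), yields $\inf_{\wh\theta}\sup_{\theta^*\in\Theta_k^{\uparrow}}\Expect\|\wh\theta-\theta^*\|^2\ge c\sigma^2 k\log\log(16n/k)$, with the two-point fallback of the first paragraph covering the residual case where $N$ is too small for $\log_2 N\ge 2$.

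The step I expect to be the real obstacle is not any single inequality but making the blockwise assembly airtight for \emph{every} $k\in[2,n]$ rather than merely asymptotically: choosing the shifts $c_j$ so that the concatenation is provably nondecreasing with at most $k$ distinct pieces uniformly over the candidate set, keeping each per-block packing/KL pair in precisely the form Proposition~\ref{prop:fano} requires when $|I_j|$ is only a small multiple of $n/k$ (so that $\log\lceil\log_2 N\rceil$ is still usefully large), and tracking constants carefully enough that $\log\log(2n/k)$ can be traded for $\log\log(16n/k)$ throughout the summation.
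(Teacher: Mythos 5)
Your proposal is correct and follows essentially the same route as the paper: the paper likewise handles $k=1$ by a standard one-dimensional argument, $k=2$ by Fano with the dyadic construction of Section 2, and $k\ge 3$ by concatenating $\lceil k/2\rceil-1$ vertically shifted copies of that two-piece construction on blocks of length $\asymp 2n/k$, decomposing the risk additively across independent blocks (your product-prior/Bayes formulation is just the explicit version of the paper's ``separable structure and sufficiency'' step) and treating the boundary regimes $n/k\lesssim 1$ by constant-order per-block bounds.
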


Combining the results of Theorem \ref{thm:upper2} and Theorem \ref{thm:lower}, we obtain the minimax rate of the problem
$$\inf_{\wh{\theta}}\sup_{\theta^*\in\Theta_{k}^{\uparrow}}\mathbb{E}\|\wh{\theta}-\theta^*\|^2\asymp  \begin{cases}
\sigma^2, & k=1, \\
\sigma^2k\log\log(16n/k), & 2 \le k\le n.
\end{cases}$$
The minimax rate implies that the iterated logarithmic dependence on $n$ is an essential feature of the space $\Theta_{k}^{\uparrow}$. 

\section{Adaptive estimation}\label{sec:adapt}

The estimator \eqref{eq:ls} that achieves the minimax rate requires the knowledge of $k$. This section proposes an adaptive estimator that can also achieve the minimax rate without knowing the value of $k$. Recalling the notation $\wh{\theta}(\Theta_{k}^{\uparrow})=\argmin_{\theta\in\Theta_k^{\uparrow}}\|X-\theta\|^2$, we propose an adaptive estimator $\wh{\theta}=\wh{\theta}(\Theta_{\wh{k}}^{\uparrow})$ with a data-driven $\wh{k}$. The data-driven $\wh{k}$ is defined through the following penalized least-squares optimization. That is,
\begin{equation}
\wh{k}=\argmin_{k\in[n]}\Big\{\|X-\wh{\theta}(\Theta_{k}^{\uparrow})\|^2+\text{pen}_{\tau}(k)\Big\}.\label{eq:wh-k-def}
\end{equation}
Inspired by the minimax rate, the penalty function is defined by
\begin{equation}
\text{pen}_{\tau}(k)=\begin{cases}
\tau, & k=1, \\
\tau k\log\log(16n/k), & 2\leq k\leq n.
\end{cases}\label{eq:pen-n-1}
\end{equation}
The estimator $\wh{\theta}$ enjoys the following adaptive oracle inequality.

\begin{thm}\label{thm:adaptive-oracle}
Consider $X=\theta^*+Z$ with any $\theta^*\in\mathbb{R}^n$ and $Z$ satisfying (\ref{eq:moment}).
We use the estimator $\wh{\theta}=\wh{\theta}(\Theta_{\wh{k}}^{\uparrow})$ with $\wh{k}$ defined in \eqref{eq:wh-k-def}. The tuning parameter is chosen as $\tau= C'\sigma^2$ for some sufficiently large universal constant $C'>0$. Then, we have
$$\mathbb{E}\|\wh{\theta}-\theta^*\|^2\leq C\min_{1\leq k\leq n}\Big\{\inf_{\theta\in\Theta_k^{\uparrow}}\|\theta-\theta^*\|^2 + \text{pen}_{\tau}(k)\Big\}$$
with some universal constant $C>0$.
\end{thm}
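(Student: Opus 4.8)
The plan is to deduce Theorem~\ref{thm:adaptive-oracle} from Theorem~\ref{thm:upper2} by a standard penalized–least–squares argument, with the penalty $\text{pen}_\tau(\cdot)$ calibrated so that it dominates (up to constants) the stochastic-error term $\sigma^2 + \sigma^2 k\log\log(16n/k)\ind\{k\ge 2\}$ appearing in Theorem~\ref{thm:upper2}. Fix an arbitrary target index $k_0\in[n]$. By definition of $\wh k$ in \eqref{eq:wh-k-def}, for every $k$,
\[
\|X-\wh\theta(\Theta_{\wh k}^{\uparrow})\|^2+\text{pen}_\tau(\wh k)\le \|X-\wh\theta(\Theta_{k}^{\uparrow})\|^2+\text{pen}_\tau(k),
\]
and in particular this holds for $k=k_0$. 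Expanding $\|X-\theta\|^2 = \|\theta^*-\theta\|^2 + 2\iprod{Z}{\theta^*-\theta} + \|Z\|^2$ on both sides and cancelling $\|Z\|^2$, I would rearrange to the basic inequality
\[
\|\wh\theta-\theta^*\|^2 \le \inf_{\theta\in\Theta_{k_0}^\uparrow}\|\theta-\theta^*\|^2 + \text{pen}_\tau(k_0) - \text{pen}_\tau(\wh k) + 2\iprod{Z}{\wh\theta-\tilde\theta_{k_0}},
\]
where $\tilde\theta_{k_0}$ is the projection of $\theta^*$ onto $\Theta_{k_0}^\uparrow$ (any near-minimizer). The whole game is to control the cross term $2\iprod{Z}{\wh\theta-\tilde\theta_{k_0}}$ uniformly over the random model index $\wh k$.

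The key technical step is a uniform bound of the form
\[
\E\Big[\sup_{k\in[n]}\Big(2\iprod{Z}{\wh\theta(\Theta_k^\uparrow)-\theta^*} - \tfrac12\|\wh\theta(\Theta_k^\uparrow)-\theta^*\|^2 - \lambda_k\Big)_+\Big]\le C\sigma^2,
\]
with $\lambda_k = c\,\sigma^2 k\log\log(16n/k)\ind\{k\ge2\} + c\sigma^2$, which is exactly the stochastic-error quantity that Theorem~\ref{thm:upper2} already controls \emph{in expectation} for each fixed $k$. To upgrade the fixed-$k$ bound to a bound uniform in $k$ I would not re-run the martingale analysis; instead I would note there are only $n$ values of $k$, and the Theorem~\ref{thm:upper2} proof in fact yields an exponential tail (or at least a bound on a suitable exponential moment) for the deviation $\iprod{Z}{\wh\theta(\Theta_k^\uparrow)-\theta^*} - \tfrac14\|\wh\theta(\Theta_k^\uparrow)-\theta^*\|^2$ at level $\asymp\sigma^2 k\log\log(16n/k)$; a union bound over $k\in[n]$ then costs only an extra additive $O(\sigma^2\log n)$, which is absorbed since $\sum_{k}$ is not taken — we only need the bound at the single random value $\wh k$, and $\log n \lesssim \wh k\log\log(16n/\wh k) + \log n$. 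Actually the cleaner route: since $\text{pen}_\tau(k) \ge \tau(k\vee 1)$ grows linearly in $k$ while the needed slack grows like $k\log\log(16n/k)$, choosing $\tau = C'\sigma^2$ large enough makes $\text{pen}_\tau(k)$ dominate $\lambda_k + C\sigma^2\log n$ for \emph{all} $k$ simultaneously once we also observe $\log n$ can be charged against the $k=1$ penalty plus the linear growth; so after the union bound the $-\text{pen}_\tau(\wh k)$ term eats the slack $\lambda_{\wh k}$, the cross term's self-bounded part $\tfrac14\|\wh\theta-\theta^*\|^2$ is moved to the left side, and one is left with $\|\wh\theta-\theta^*\|^2 \lesssim \inf_{\theta\in\Theta_{k_0}^\uparrow}\|\theta-\theta^*\|^2 + \text{pen}_\tau(k_0) + \sigma^2$, whence taking the min over $k_0$ gives the claim (absorbing the stray $\sigma^2$ into $\text{pen}_\tau(k_0)\ge\tau$).

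There is one genuine subtlety I would flag as the main obstacle: the cross term $\iprod{Z}{\wh\theta(\Theta_{\wh k}^\uparrow)-\tilde\theta_{k_0}}$ involves \emph{two} estimators indexed by different models, and the self-bounding trick only directly handles $\iprod{Z}{\wh\theta(\Theta_{\wh k}^\uparrow)-\theta^*}$. I would split $\iprod{Z}{\wh\theta-\tilde\theta_{k_0}} = \iprod{Z}{\wh\theta-\theta^*} + \iprod{Z}{\theta^*-\tilde\theta_{k_0}}$; the first piece is handled by the uniform-in-$k$ argument above, and the second is a fixed linear functional of $Z$ with $\E\iprod{Z}{\theta^*-\tilde\theta_{k_0}}=0$ and variance $\sigma^2\|\theta^*-\tilde\theta_{k_0}\|^2$, so by Cauchy–Schwarz $2\iprod{Z}{\theta^*-\tilde\theta_{k_0}} \le \tfrac14\|\theta^*-\tilde\theta_{k_0}\|^2 + 4\sigma^2 W$ with $\E W = O(1)$ — again absorbed. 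The bookkeeping of which constant goes where (ensuring $C'$ in $\tau=C'\sigma^2$ is chosen \emph{after} the constants from Theorem~\ref{thm:upper2}'s tail bound are fixed, so that $\text{pen}_\tau$ strictly dominates the required slack for every $k$) is the part that needs care, but it is routine once the exponential-moment version of Theorem~\ref{thm:upper2}'s deviation estimate is in hand. Since the text of Theorem~\ref{thm:upper2} is stated only in expectation, the honest version of this proof either (i) points to the stronger exponential-tail statement proved inside Section~\ref{sec:proof}, or (ii) re-derives the needed tail bound by the same repeated-partitioning plus Doob/Lévy maximal inequality argument sketched in Remark~\ref{remark:R1}; I would take route (i).
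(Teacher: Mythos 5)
Your overall skeleton (basic inequality from the definition of $\wh{k}$ in \eqref{eq:wh-k-def}, then control of the cross term, then absorption by the penalty) matches the paper's, but the step you rely on to handle the randomness of $\wh{k}$ has a genuine gap. You upgrade the fixed-$k$ deviation bound to a uniform-in-$k$ bound by a union bound over the $n$ model sizes, conceding an additive cost of order $\sigma^2\log n$, and then claim this cost can be dominated by $\text{pen}_\tau(k)$ once $\tau=C'\sigma^2$ is large. That domination fails: $\text{pen}_\tau(k)=\tau k\log\log(16n/k)$, so for small $k$ (e.g.\ $k=2$ it is of order $\sigma^2\log\log n$) no universal constant $C'$ makes it exceed $C\sigma^2\log n$ for large $n$; nor can the cost be charged to $\text{pen}_\tau(\wh{k})$, since when $\theta^*$ has few pieces the selected $\wh{k}$ is typically small. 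The inequality ``$\log n\lesssim \wh{k}\log\log(16n/\wh{k})+\log n$'' is vacuous, and ``charging $\log n$ against the $k=1$ penalty plus linear growth'' only works when $k\gtrsim \log n$. As written, your route proves at best an oracle inequality with an extra $\sigma^2\log n$ term, which destroys exactly the iterated-logarithmic adaptivity that Theorem~\ref{thm:adaptive-oracle} asserts. A secondary problem is your ``route (i)'': under the moment condition \eqref{eq:moment} (only $2+\epsilon$ moments, or $L\log L$ in the i.i.d.\ case) the full deviation cannot have an exponential tail; the paper's tail bounds (e.g.\ in Lemmas~\ref{lem:er12} and \ref{lem:er12-all-k-2-part}) hold only for truncated pieces, with the heavy-tailed remainder controlled in expectation only, so the exponential-moment statement you want to cite is not actually available.

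The paper's proof avoids the union bound over model sizes altogether. It keeps the decomposition of $2\iprod{X-\theta^*}{\wh{\theta}-\theta^{(k)}}$ from the proof of Theorem~\ref{thm:upper2} — which is organized around the \emph{fixed} oracle knots $A_k=\{a_j\}$ and is purely deterministic in the estimator's knots — and simply lets $\{\wh{a}_h\}$ be the knots of $\wh{\theta}(\Theta_{\wh{k}}^{\uparrow})$. The only place randomness of the model size enters is the counting of active indicators $\delta_\pm$, and Lemma~\ref{lem:er12-all-k} (proved via a union bound over knot placements stratified by the number $L\le\wt{k}$ of active indicators, costing $L\log(em/L)$ rather than $\log n$) yields the bound $C\{\sigma^2k\log\log(16n/k)+\sigma^2\mathbb{E}\wh{k}\log\log(16n/\wh{k})\}$. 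The term $\sigma^2\mathbb{E}\wh{k}\log\log(16n/\wh{k})$ is then absorbed by $2\mathbb{E}\,\text{pen}_\tau(\wh{k})$ sitting on the left-hand side of the basic inequality, once $\tau=C_1\sigma^2$ is large; nothing of size $\sigma^2\log n$ ever appears. To repair your argument you would need either this ``selected model pays for its own complexity'' mechanism, or deviation bounds whose tails improve with $k$ (so that the union bound over $k$ costs only $O(\sigma^2)$), neither of which your proposal supplies.
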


\begin{remark}
Unlike in isotonic regression, an implicit assumption of Theorem \ref{thm:adaptive-oracle} is that we need to know the order of the variance $\sigma^2$. When $Z_i\sim N(0,\sigma^2)$, the unknown $\sigma$ can be estimated by the following robust procedure, 
$$
{\wh \sigma} = \frac{\hbox{\rm Median}\Big(|X_{i+1}-X_{i}|, 1\le i<n\Big)}
{\sqrt{2}\,\hbox{\rm Median}(|N(0,1)|)},
$$
As $\left|\{i: \big|\E\big[ X_{i+1}-X_{i} \big]\big| > \epsilon_0\sigma\big\}\right|$ 
is bounded by $k-1$ when $\theta^*$ has $k$ pieces and by $\|\theta^*\|_1/(\epsilon_0\sigma)$ in general, 
the above ${\wh \sigma}$ is consistent when $\min(k,\|\theta^*\|_1/\sigma)=o(n)$ and is of the order $\sigma$ when $\min(k,\|\theta^*\|_1/\sigma)\le c_0n$ for some fixed small enough constant $c_0>0$. 
On the other hand, estimation of $\sigma^2$, or even just its order, is impossible when $\theta^*$ is arbitrary. In this case, whether it is still possible to achieve the oracle inequality in Theorem \ref{thm:adaptive-oracle} is an interesting open problem.
\end{remark}

Theorem \ref{thm:adaptive-oracle} can be viewed as an adaptive version of Theorem \ref{thm:upper2}. The oracle inequality automatically selects the best $k$ that achieves the optimal bias-variance tradeoff. When the true mean vector $\theta^*$ does belong to the space $\Theta_k^{\uparrow}$, we have $\mathbb{E}\|\wh{\theta}-\theta^*\|^2\lesssim\text{pen}_{\tau}(k)$, and thus the minimax rate is achieved without the knowledge of $k$.

When $\theta^*\in\Theta_n^{\uparrow}$ so that it is isotonic, the above oracle inequality can be further improved.
By \cite{meyer2000degrees} and \cite{zhang2002risk}, as $\theta^*$ is isotonic, the estimator $\wh{\theta}^{(iso)}=\wh{\theta}(\Theta_n^{\uparrow})$ satisfies the risk bound
\begin{equation}
\mathbb{E}\|\wh{\theta}^{(iso)}-\theta^*\|^2\lesssim \sigma^2\Big\{\log(en)+n^{1/3}(V(\theta^*)/\sigma)^{2/3}\Big\},\label{eq:cubic rate}
\end{equation}
where $V(\theta^*)=\theta_n^*-\theta_1^*$ is the total variation of the vector $\theta^*$. This risk bound can be significantly smaller than $\text{pen}_{\tau}(k)$ when $V(\theta^*)/\sigma$ is small and $k$ is large.
This motivates us to modify the value of $\text{pen}_{\tau}(n)$ to achieve
the better rate between (\ref{eq:cubic rate}) and (\ref{eq:pen-n-1}). A direct choice of the modified penalty is just the bound on the right hand side of (\ref{eq:cubic rate}). However, this option depends on the value of $V(\theta^*)$, which may not be available in practice. Inspired by the risk analysis in \cite{zhang2002risk}, we consider
\begin{align}
\label{eq:pen-n} \tau\Big\{\log(en) +   \sum_{\{\ell\geq 0:2^{\ell}\leq n/3\}}\frac{\wh{l}_{\tau}(2^{\ell+1})-\wh{l}_{\tau}(2^{\ell})}{2^{\ell+1}}\Big\},
\end{align}
where
$$\hat{l}_{\tau}(m):=\min\Big\{n,3m + m\sqrt{m+1}\Big(\wb{X}_{[n-m:n-m/2)}-\wb{X}_{(1+m/2:1+m]}\Big)/\sqrt{\tau}\Big\}.$$
Note that (\ref{eq:pen-n}) is a data-driven estimate of the risk of $\wh{\theta}^{(iso)}$.
Then, we have a well-defined penalty function on $[n]$ by combining (\ref{eq:pen-n-1}) and (\ref{eq:pen-n}). The modified penalty function in summary is 
\begin{equation*}
\wt{\text{pen}}_{\tau}(k)=\begin{cases}
\tau, & k=1, \\
\tau \text{pen}_{\tau}(k), & 2\leq k\leq n-1, \\
\tau\Big\{\log(en) +   \sum_{\{\ell\geq 0:2^{\ell}\leq n/3\}}\frac{\wh{l}_{\tau}(2^{\ell+1})-\wh{l}_{\tau}(2^{\ell})}{2^{\ell+1}}\Big\}, &k=n.
\end{cases}
\end{equation*}
With some appropriate choice of $\tau$, the performance of $\wh{\theta}=\wh{\theta}(\Theta_{\wh{k}}^{\uparrow})$ is given by the following theorem.

\begin{thm}\label{thm:g-adaptation}
Consider $X=\theta^*+Z$ with any $\theta^*\in\Theta_n^{\uparrow}$ and $Z$ satisfying $\displaystyle \max_{1\leq i\leq n}\mathbb{E}|Z_i/\sigma|^{2+\epsilon}\leq C_1$.
We use the estimator $\wh{\theta}=\wh{\theta}(\Theta_{\wh{k}}^{\uparrow})$ with $\wh{k}$ selected by the modified penalty function $\wt{\text{pen}}_{\tau}(k)$. The tuning parameter is chosen as $\tau= C'\sigma^2$ for some sufficiently large universal constant $C'>0$. Then, we have
$$\mathbb{E}\|\wh{\theta}-\theta^*\|^2\leq C\min_{1\leq k\leq n}\Big\{\inf_{\theta\in\Theta_k^{\uparrow}}\|\theta-\theta^*\|^2 + {\rm isoerr}_k(\theta^*)\Big\},$$
for some universal constant $C>0$. The stochastic error term ${\rm isoerr}_k(\theta^*)$ is defined by
$${\rm isoerr}_k(\theta^*)= \begin{cases}
\sigma^2, & k=1, \\
\sigma^2\min\Big\{k\log\log(\frac{16n}{k}),\log(en)\!+\!n^{1/3}(\frac{V(\theta^*)}{\sigma})^{2/3}\Big\}, & k\geq 2.
\end{cases}$$
\end{thm}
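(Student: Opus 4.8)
The plan is to reduce the asserted bound to two separate estimates and then attack the genuinely new ingredient, which is the data-driven block $\wt{\text{pen}}_\tau(n)$ of the modified penalty. Since $\theta^*\in\Theta_n^{\uparrow}$ gives $\inf_{\theta\in\Theta_n^{\uparrow}}\|\theta-\theta^*\|^2=0$ and the cubic term $\log(en)+n^{1/3}(V(\theta^*)/\sigma)^{2/3}$ is free of $k$, a short manipulation shows that the right-hand side of the theorem equals, up to universal constants, $\min\{\sigma^2\log(en)+\sigma^2 n^{1/3}(V(\theta^*)/\sigma)^{2/3},\ \min_{1\le k\le n}\{\inf_{\theta\in\Theta_k^{\uparrow}}\|\theta-\theta^*\|^2+\text{pen}_\tau(k)\}\}$, so it is enough to dominate $\mathbb E\|\wh\theta-\theta^*\|^2$ by a universal multiple of each term in this minimum. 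I would split according to whether $\wh k=n$: on $\{\wh k=n\}$ one has $\wh\theta=\wh\theta^{(iso)}$, and (\ref{eq:cubic rate}) already caps the contribution of this event by $\sigma^2\{\log(en)+n^{1/3}(V(\theta^*)/\sigma)^{2/3}\}$; so only the event $\{\wh k\le n-1\}$, on which $\wt{\text{pen}}_\tau$ coincides with $\text{pen}_\tau$, requires work.

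For the first estimate I would, on $\{\wh k\le n-1\}$, compare the selected criterion value against an arbitrary target $k^*\le n-1$ and $\theta\in\Theta_{k^*}^{\uparrow}$: the basic inequality $\|X-\wh\theta\|^2+\text{pen}_\tau(\wh k)\le\|X-\theta\|^2+\text{pen}_\tau(k^*)$, expanded around $\theta^*$, leaves us with the task of controlling uniformly over $k\le n-1$ and $\eta\in\Theta_k^{\uparrow}$ the localized process $2\langle Z,\eta-\pi_k\rangle-\tfrac12\|\eta-\pi_k\|^2-\tfrac12\,\text{pen}_\tau(k)$, where $\pi_k$ is the projection of $\theta^*$ onto $\Theta_k^{\uparrow}$; this is precisely the object handled in the proof of Theorem~\ref{thm:adaptive-oracle} by the repeated-partitioning argument together with the Levy and Doob maximal inequalities, and it yields the bound $\lesssim\min_{1\le k\le n-1}\{\inf_{\theta\in\Theta_k^{\uparrow}}\|\theta-\theta^*\|^2+\text{pen}_\tau(k)\}+\sigma^2$ after the usual (delicate) bookkeeping in which the bias contributions from the two sides of the basic inequality are made to cancel.

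For the second estimate, still on $\{\wh k\le n-1\}$, I would compare against $k=n$: subtracting $\|Z\|^2$ from $\|X-\wh\theta\|^2+\text{pen}_\tau(\wh k)\le\|X-\wh\theta^{(iso)}\|^2+\wt{\text{pen}}_\tau(n)$ and using the projection inequality $\langle Z,\wh\theta^{(iso)}-\theta^*\rangle\ge\|\wh\theta^{(iso)}-\theta^*\|^2$ (valid because $\wh\theta^{(iso)}$ is the projection of $X$ onto the cone $\Theta_n^{\uparrow}\ni\theta^*$), the isotonic estimator drops out with a favourable sign and one is left with $\|\wh\theta-\theta^*\|^2\le 2\langle Z,\wh\theta-\theta^*\rangle+\wt{\text{pen}}_\tau(n)-\text{pen}_\tau(\wh k)$; the term $2\langle Z,\wh\theta-\theta^*\rangle-\text{pen}_\tau(\wh k)$ is handled exactly as in the first estimate (splitting $\wh\theta-\theta^*=(\wh\theta-\pi_{\wh k})+(\pi_{\wh k}-\theta^*)$ and using $\|\wh\theta-\theta^*\|^2=\|\wh\theta-\pi_{\wh k}\|^2+\|\pi_{\wh k}-\theta^*\|^2$), so everything hinges on $\wt{\text{pen}}_\tau(n)$ being "right-sized". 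Concretely, I would establish two properties of $\wt{\text{pen}}_\tau(n)$, which is by design the empirical analogue of the risk estimate of \cite{zhang2002risk}: (i) $\mathbb E[\wt{\text{pen}}_\tau(n)]\lesssim\sigma^2\{\log(en)+n^{1/3}(V(\theta^*)/\sigma)^{2/3}\}$; and (ii) for $\tau=C'\sigma^2$ with $C'$ large, $\wt{\text{pen}}_\tau(n)$ exceeds $\|\wh\theta^{(iso)}-\theta^*\|^2$ (plus a multiple of $\text{pen}_\tau(\wh k)$) with probability at least $1-n^{-cC'}$. Property (i) makes the $\wt{\text{pen}}_\tau(n)$ term in the displayed bound of the desired order, while property (ii) guarantees that $k=n$ is not under-penalized — so that on the complement of $\{\wh k=n\}$ the isotonic fit could not have beaten the chosen $\wh k$ unless $\text{pen}_\tau(\wh k)$ itself was small — and the $n^{-cC'}$ exceptional event contributes only $O(\sigma^2)$ since $\|\wh\theta-\theta^*\|^2\le\|Z\|^2$. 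Combining the two estimates then gives the minimum and hence the theorem.

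The heart of the matter, and the step I expect to be hardest, is proving (i) and (ii), which is where the particular shape of $\hat l_\tau(m)$ enters. Writing $\bar X_{[n-m:n-m/2)}-\bar X_{(1+m/2:1+m]}$ as its signal plus noise parts, the signal part is nonnegative and at most $V(\theta^*)$ by monotonicity of $\theta^*$ (telescoping), while the noise part has mean zero and fluctuation of order $\sigma/\sqrt m$; since $\hat l_\tau(m)=\min\{n,\,3m+m\sqrt{m+1}(\,\cdot\,)/\sqrt\tau\}$, this yields $\mathbb E[\hat l_\tau(m)]\le\min\{n,\,3m+c\,m^{3/2}V(\theta^*)/\sigma\}$, and substituting into the dyadic sum $\sum_{\{\ell\ge 0:2^\ell\le n/3\}}\{\hat l_\tau(2^{\ell+1})-\hat l_\tau(2^\ell)\}/2^{\ell+1}$ — which after an Abel rearrangement becomes a geometric series whose contributions balance at the crossover $m\asymp(n\sigma/V(\theta^*))^{2/3}$ — produces (i), with the $\log(en)$ piece coming from the explicit term and from $\sum 3\cdot 2^\ell/2^{\ell+1}\asymp\log_2 n$. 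Property (ii) would follow by applying the partial-sum and martingale maximal inequalities used elsewhere in the paper to the fluctuations of $\bar Z_{[n-m:n-m/2)}-\bar Z_{(1+m/2:1+m]}$ across the dyadic scales (so that, with $C'$ large, $\hat l_\tau(m)$ concentrates above $3m$ on its non-saturated range), combined with the pointwise risk analysis of \cite{zhang2002risk}, which is exactly what forces the definition of $\hat l_\tau$ and shows that $\wt{\text{pen}}_\tau(n)$ dominates the isotonic risk with overwhelming probability; the contraction of the projection onto the monotone cone then bounds the left-over event.
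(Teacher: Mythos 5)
Your reduction of the right-hand side to the two separate targets (A) the cubic-rate bound $\sigma^2\{\log(en)+n^{1/3}(V(\theta^*)/\sigma)^{2/3}\}$ and (B) the oracle bound $\min_{k}\{\inf_{\theta\in\Theta_k^{\uparrow}}\|\theta-\theta^*\|^2+\text{pen}_{\tau}(k)\}$ is correct, and your treatment of (A) is broadly in line with the paper (which gets it more directly from $\|\wh{\theta}-\wh{\theta}^{(n)}\|^2\leq\wt{\text{pen}}_{\tau}(n)$ plus Lemma \ref{lem:isotonic-bound}). The genuine gap is in (B): you only prove the oracle bound on the event $\{\wh{k}\leq n-1\}$, and on $\{\wh{k}=n\}$ you cap the contribution by \eqref{eq:cubic rate}. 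That yields $\mathbb{E}\|\wh{\theta}-\theta^*\|^2\lesssim \min_{k\le n-1}\{\text{bias}_k+\text{pen}_{\tau}(k)\}+\sigma^2\{\log(en)+n^{1/3}(V/\sigma)^{2/3}\}$, a \emph{sum}, not the minimum the theorem asserts. Concretely, take $\theta^*\in\Theta_2^{\uparrow}$ with a huge jump $V$: the theorem demands the rate $\sigma^2\log\log(16n)$, but your bound for the $\{\wh{k}=n\}$ piece is the cubic rate (or at best the isotonic-LSE rate $\sigma^2\log(en)$), and nothing in your argument shows this event is negligible at the required scale. Your property (ii) is invoked only to control the comparison against $k=n$ \emph{on the complement} of $\{\wh{k}=n\}$, so it never rescues the oracle bound on $\{\wh{k}=n\}$ itself. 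This is exactly the difficulty the paper's Step 1 is built to overcome: it proves the oracle bound for the \emph{full} expectation by running the penalized basic inequality to obtain two pathwise bounds --- one (via Lemma \ref{lem:er12-all-k-2-part}) with stochastic error of order $\sigma^2\{k\log\log(16n/k)+\wh{k}\log\log(16n/\wh{k})\}$, and one in which the problematic cross term is bounded, using $\wh{A}_{\wh{k}}\subset\wh{A}_n$, by $\|\wh{\theta}^{(n)}-\theta^*\|^2$ --- then takes the minimum of the two and shows, using the tail bounds of Lemma \ref{lem:isotonic-bound} and the fact that $\wt{\text{pen}}_{\tau}(n)$ is by construction an empirical estimate of the isotonic risk, that this minimum is dominated by $\wt{\text{pen}}_{\tau}(k)+\wt{\text{pen}}_{\tau}(\wh{k})$ and can be absorbed into the $2\wt{\text{pen}}_{\tau}(\wh{k})$ kept on the left side, uniformly in the value of $\wh{k}$, including $\wh{k}=n$.

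A secondary but real problem is your tail bookkeeping for (ii): under the assumption $\max_i\mathbb{E}|Z_i/\sigma|^{2+\epsilon}\leq C_1$ you cannot obtain events of probability $1-n^{-cC'}$, nor control the exceptional event via $\|\wh{\theta}-\theta^*\|^2\lesssim\|Z\|^2$ by Cauchy--Schwarz (that needs fourth moments). The paper's argument is deliberately structured around polynomial tails of order $(1+t)^{-(1+\epsilon/2)}$ (Lemma \ref{lem:isotonic-bound}) and exponential tails for the truncated chaining variables (Lemma \ref{lem:er12-all-k-2-part}), which are then \emph{integrated} inside minima with deterministic or penalty-sized quantities rather than used as crude high-probability localization. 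To repair your proposal you would need to replace the event split by an argument that controls $\mathbb{E}[\|\wh{\theta}^{(iso)}-\theta^*\|^2\ind\{\wh{k}=n\}]$ at the oracle scale, which essentially forces you back to the paper's min-of-two-bounds construction.
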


We remark that the rate in the above theorem is always no greater than that of Theorem \ref{thm:adaptive-oracle}. If we further impose the condition that $V(\theta^*)/\sigma\leq n^{1-\delta}$ for some universal constant $\delta\in(0,1)$, the rate given by Theorem \ref{thm:g-adaptation} can be summarized into three phases,
$${\rm isoerr}_k(\theta^*)\asymp \begin{cases}
\sigma^2, & k=1, \\
\sigma^2 k\log\log(16n), & 2\leq k\leq \frac{\log(en)+n^{1/3}(V(\theta^*)/\sigma)^{2/3}}{\log\log(16n)}, \\
\sigma^2\Big\{\log(en)\!+\!n^{1/3}(V(\theta^*)/\sigma)^{2/3}\Big\}, & k > \frac{\log(en)+n^{1/3}(V(\theta^*)/\sigma)^{2/3}}{\log\log(16n)}.
\end{cases}$$
In other words, the adaptive estimator with the modified penalty can achieve both the minimax rates of the class $\Theta_k^{\uparrow}$ derived in this paper and the rate of isotonic regression in \cite{meyer2000degrees} and \cite{zhang2002risk}.

An interesting open problem is whether it is possible to obtain sharp oracle inequalities with the constant before the approximation error to be exactly one. The counter example constructed by \cite{rigollet2012sparse} in a sparse linear regression setting seems to suggest that this task may be impossible for the penalized least-squares procedure considered in this paper.

\section{Discussion}\label{sec:disc}

\subsection{Computational issues}\label{sec:compute}

The optimization problem (\ref{eq:ls}) is recognized as reduced isotonic regression in the literature \citep{schell1997reduced}, and related $\ell_0$ optimization problems have been studied in literature (see, for example, \cite{friedrich2008complexity} and \cite{jewell2017exact} among many others). As $k=n$, the solution to the isotonic regression problem, $\wh{\theta}(\Theta_n^{\uparrow})$,  can be computed efficiently in $O(n)$ time using the pool-adjacent-violators algorithm (PAVA) \citep{mair2009isotone}. Computation of $\wh{\theta}(\Theta_k^{\uparrow})$ for $k=1,2,...,n-1$ may seem to be combinatorial, but by taking advantage of the PAVA solution, it can be reduced to a simple dynamic programming. 

In detail, denote the set of knots (change points) of $\wh{\theta}(\Theta_k^{\uparrow})$ by $\wh{A}_k$. The following two properties are immediate from Lemma \ref{lem:cun-hui} (that will be stated in Section \ref{sec:LEMMA}):
\begin{enumerate}
\item For any $k\in[n]$, we have $\wh{A}_k\subset \wh{A}_n$;
\item For any $k\in[n]$, $\wh{\theta}(\Theta_k^{\uparrow})$ is a piecewise constant function with knots in $\wh{A}_k$. Moreover, each piece is a sample average of the $X_i$'s in that block.
\end{enumerate}
The first property asserts that the knots of $\wh{\theta}(\Theta_k^{\uparrow})$ are always contained in the solution of PAVA. The second property implies that $\wh{\theta}(\Theta_k^{\uparrow})$ can be obtained by averaging consecutive entries of $\wh{\theta}(\Theta_n^{\uparrow})$. Since $\wh{\theta}(\Theta_n^{\uparrow})$ is already isotonic, one does not need to worry about the isotonic constraint anymore, and the only task is to find the best change points among $\wh{A}_n$ that minimize the squared error loss.
Therefore, one can first run PAVA and obtain a set of potential knots $\wh{A}_n=\{t_j\}_{j=1}^{\wh{n}}$. Then, the search for the knots of $\wh{\theta}(\Theta_k^{\uparrow})$ in $\{t_j\}_{j=1}^{\wh{n}}$ can be implemented efficiently through dynamic programming. Note that $\wh{A}_k=\wh{A}_n$ for all $k\geq \wh{n}$, and we only need to find $\wh{A}_k$ for $k<\wh{n}$. Details of implementation are given in Algorithm \ref{alg:knots} for completedness.
\def\Loss{\hbox{{\rm Loss}}}
\def\knots{\hbox{{\rm knots}}}
\def\means{\hbox{{\rm means}}}
\def\knot{\hbox{{\rm left.knot}}}
\def\TLoss{\hbox{{\rm T.Loss}}}
\begin{algorithm}
\DontPrintSemicolon
\SetKwInOut{Input}{Input}\SetKwInOut{Output}{Output}
\Input{$\{X_i\}_{i=1}^n$, $t_0=0$, knots $t_1<\cdots<t_{\wh{n}}=n$ from PAVA}
\Output{$\wh{A}_k$ and the corresponding piecewise average for all $k<\wh{n}$} 
\nl For $j$ in $1:\wh{n}$, compute the partial sums of $X_i$ and $X_i^2$,\;
\qquad$S(j)\leftarrow \sum_{0<i\leq t_j}X_i,\quad SS(j) \leftarrow \sum_{0<i\le t_j}X_i^2$.

\nl For all $(\ell,j)$ such that $0\leq \ell<j\leq \wh{n}$, compute the loss for fitting by mean in $(t_{\ell}:t_j]$,\;
\qquad $\Loss(\ell,j) \leftarrow SS(j)-SS(\ell) - (S(j)-S(\ell))^2/(a_j-a_\ell)$.

\nl For $j$ in $1:\wh{n}$, copy the loss for fitting by mean in $(0:t_j]$,\;
\qquad $\TLoss(1,j) \leftarrow \Loss(0,j)$.

\nl For $k$ in $2:\wh{n}-1$\;
\qquad For $j$ in $k:\wh{n}$, compute the minimal loss for $k$-piece monotone fit in $(0:t_j]$,\;
\qquad\qquad $\knot(k,j) \leftarrow \argmin_{1\le \ell<j}\{ \TLoss(k-1,\ell) + \Loss(\ell,j)\}$,\;
\qquad\qquad	$\TLoss(k,j) \leftarrow \TLoss(k-1,\knot(k,j)) + \Loss(\knot(k,j),j)$.\;
\qquad	$\knots(k,k) = \wh{n}$.\;
\qquad For $j$ in $(k-1):1$, compute $\wh{A}_k$, \;
\qquad\qquad	$\knots(k,j) \leftarrow \knot(j+1,\knots(k,j+1))$.

\caption{Computation of $\wh{A}_k$ for all $k<\wh{n}$\label{alg:knots}}
\end{algorithm}

Since Algorithm \ref{alg:knots} computes $\wh{\theta}(\Theta_k^{\uparrow})$ for all $k$, one can directly use the results to obtain the adaptive estimator $\wh{\theta}=\wh{\theta}(\Theta_{\wh{k}}^{\uparrow})$ via (\ref{eq:wh-k-def}). By \cite{friedrich2008complexity}, the complexity of Algorithm \ref{alg:knots} is $O(\wh{n}^3)$ after PAVA. Therefore, the overall complexity of (\ref{eq:wh-k-def}) is $O(n+\wh{n}^3)$. This leads to a worst-case complexity bound $O(n^3)$. However, since $\wh{n}$ enjoys the rate $\sigma^2\{V/\sigma + \log(en)+n^{1/3}(V/\sigma)^{2/3}\}$ 
by Theorem 1 of \cite{meyer2000degrees}, 
with high probability the isotonic regression (or PAVA) yields an $\wh{n}$ of order $O(n^{1/3})$ when $V/\sigma=O(1)$. This leads to a linear complexity $O(n)$.


\subsection{Comparison with piecewise constant models}\label{sec:compare-piece}

A closely related problem to estimating isotonic piecewise constant functions is the estimation of piecewise constant signals without the monotone condition. We define the space of piecewise constant models as
\begin{align} \label{eq:tkg}
\Theta_k &= \Big\{\theta\in\mathbb{R}^n: \text{there exist }\{a_j\}_{j=0}^k\text{ and }\{\mu_j\}_{j=1}^k\text{ such that }\\
&0=a_0\leq a_1\leq\cdots\leq a_k=n,\text{ and }\theta_i=\mu_j\text{ for all }i\in(a_{j-1}:a_j]\Big\}. \notag
\end{align}
This section shows that $\Theta_{k}^{\uparrow}$ and $\Theta_{k}$ have different error behaviors.

\begin{thm}\label{thm:upper1}
For any $k\in[n]$, the minimax rate for the space $\Theta_{k}$ is given by
$$\inf_{\wh{\theta}}\sup_{\theta^*\in\Theta_{k}}\mathbb{E}\|\wh{\theta}-\theta^*\|^2\asymp\begin{cases}
\sigma^2, & k=1, \\
\sigma^2\log\log(16n), & k=2, \\
\sigma^2k\log(en/k), & k\geq 3,
\end{cases}$$
where the expectation is taken over the distribution $X\sim N(\theta^*,\sigma^2I_n)$.
\end{thm}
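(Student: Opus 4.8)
The plan is to establish matching upper and lower bounds in the three regimes $k=1$, $k=2$, and $k\ge 3$. The case $k=1$ is the estimation of a single Gaussian mean, so $\mathbb{E}\|\hat\theta(\Theta_1)-\theta^*\|^2=\sigma^2$ and the matching lower bound $\gtrsim\sigma^2$ is classical. Throughout I write $\Theta_k=\bigcup_S V_S$, where $S$ runs over subsets of $[n-1]$ with $|S|\le k-1$ and $V_S\subseteq\reals^n$ is the $(|S|+1)$-dimensional subspace of vectors that are constant between consecutive points of $S\cup\{0,n\}$; thus $\hat\theta(\Theta_k)$ is simply the residual-minimizer among the projections $\{P_{V_S}X\}_S$. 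For the upper bound with $k\ge 3$ and $\theta^*\in\Theta_k$, I start from the basic least-squares inequality $\|\hat\theta(\Theta_k)-\theta^*\|^2\le 2\langle Z,\hat\theta(\Theta_k)-\theta^*\rangle$. Since $\hat\theta(\Theta_k)$ and $\theta^*$ each have at most $k$ pieces, their difference lies in some $V_S$ with $|S|\le 2k-2$, so Cauchy--Schwarz gives $\|\hat\theta(\Theta_k)-\theta^*\|^2\le 4\max_{|S|\le 2k-2}\|P_{V_S}Z\|^2$. Each $\|P_{V_S}Z\|^2$ is stochastically dominated by $\sigma^2\chi^2_{2k-1}$, and there are at most $\binom{n-1}{2k-2}(2k-1)\le (Cn/k)^{2k}$ such subspaces, so a standard chi-square maximal inequality (sub-exponential tail plus a union bound) yields $\mathbb{E}\max_{|S|\le 2k-2}\|P_{V_S}Z\|^2\lesssim\sigma^2\big(k+k\log(en/k)\big)\asymp\sigma^2k\log(en/k)$; when $k>n/2$ one instead uses the trivial bound $\mathbb{E}\|Z\|^2=\sigma^2 n\asymp\sigma^2 k\asymp\sigma^2 k\log(en/k)$.

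For $k=2$ the crude union bound above only produces $\sigma^2\log n$, so a finer argument is required. Here I use $\Theta_2=\Theta_2^{\uparrow}\cup\Theta_2^{\downarrow}$ and observe that, being a minimum over a union, $\hat\theta(\Theta_2)$ coincides with whichever of $\hat\theta(\Theta_2^{\uparrow}),\hat\theta(\Theta_2^{\downarrow})$ has the smaller residual. Assuming without loss of generality $\theta^*\in\Theta_2^{\uparrow}$: on the event that the nondecreasing fit is selected, $\|\hat\theta(\Theta_2)-\theta^*\|^2=\|\hat\theta(\Theta_2^{\uparrow})-\theta^*\|^2$, which is $\lesssim\sigma^2\log\log(16n)$ in expectation by Theorem~\ref{thm:upper2} (with zero bias); on the complementary event $\|X-\hat\theta(\Theta_2^{\downarrow})\|^2\le\|X-\hat\theta(\Theta_2^{\uparrow})\|^2\le\|X-\theta^*\|^2$, so $\|\hat\theta(\Theta_2)-\theta^*\|^2=\|\hat\theta(\Theta_2^{\downarrow})-\theta^*\|^2\le 4\,(W^{\downarrow})^2$ with $W^{\downarrow}:=\sup_{\theta\in\Theta_2^{\downarrow},\,\theta\neq\theta^*}\langle Z,\theta-\theta^*\rangle_+/\|\theta-\theta^*\|$. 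Combining, $\mathbb{E}\|\hat\theta(\Theta_2)-\theta^*\|^2\lesssim\sigma^2\log\log(16n)+\mathbb{E}(W^{\downarrow})^2$, and it remains to show $\mathbb{E}(W^{\downarrow})^2\lesssim\sigma^2\log\log(16n)$. For fixed $\theta^*\in\Theta_2^{\uparrow}$, every $\theta-\theta^*$ with $\theta\in\Theta_2^{\downarrow}$ is of the form $c\mathbf{1}+d\,\mathbf{1}_{(0:b]}+\gamma\,\mathbf{1}_{(0:a^*]}$ with $c\in\reals$, $d\ge 0$, $b\in[n]$, and $a^*,\gamma\ge 0$ fixed, so $W^{\downarrow}\le\max_{b\in[n]}\|P_{\Span(\mathbf{1},\mathbf{1}_{(0:a^*]},\mathbf{1}_{(0:b]})}Z\|$; the part of this not depending on $b$ contributes $O(\sigma)$, while the $b$-dependent one-dimensional part is a partial-sum process with one free endpoint, whose supremum over $b$ is of order $\sigma\sqrt{\log\log(16n)}$ by the iterated-logarithm maximal inequality of Lemma~\ref{lem:simple-lil} --- exactly the mechanism behind Theorem~\ref{thm:upper2}. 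This gives $\mathbb{E}\|\hat\theta(\Theta_2)-\theta^*\|^2\lesssim\sigma^2\log\log(16n)$, and the case $\theta^*\in\Theta_2^{\downarrow}$ is symmetric.

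For the lower bounds: the $k=1$ bound is the single-mean bound, and for $k=2$ the inclusion $\Theta_2^{\uparrow}\subseteq\Theta_2$ together with Theorem~\ref{thm:lower} at $k=2$ immediately gives the minimax risk over $\Theta_2$ is $\gtrsim\sigma^2\log\log(16n)$. For $k\ge 3$ I use a ``spike'' construction: set $m=\floor{(k-1)/2}$ (so $m\asymp k$), split $[n]$ into $m$ consecutive blocks of length $L=\floor{n/m}\asymp n/k$, and on each block take the $\asymp L$ vectors that equal $h:=\sqrt{\alpha\sigma^2\log(16L)}$ at one interior entry and $0$ elsewhere. Every concatenation has at most $2m+1\le k$ pieces and hence lies in $\Theta_k$. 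Within a block this is a Gaussian location model over $\asymp L$ points that are pairwise at squared distance $2h^2$ and pairwise KL-distance $h^2/\sigma^2=\alpha\log(16L)$, so Fano's inequality (Proposition~\ref{prop:fano}) gives per-block minimax risk $\gtrsim\sigma^2\log(16L)\asymp\sigma^2\log(en/k)$ for a small enough universal $\alpha$; a standard tensorization of minimax risk over the $m$ independent blocks with the additive squared-$\ell_2$ loss then yields $\gtrsim m\sigma^2\log(en/k)\asymp\sigma^2 k\log(en/k)$, valid once $L\ge 2$, i.e.\ $k\lesssim n$. When $k>n/2$, $\log(en/k)\asymp 1$ and it suffices to note that $\Theta_k$ contains a $\ceil{k/2}$-dimensional coordinate subspace (free values at every other index), giving the matching bound $\gtrsim\sigma^2 k$.

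I expect the main obstacle to be the $k=2$ upper bound: this is precisely the regime where the generic union-over-subspaces argument is off by a factor $\log n/\log\log(16n)$, so one cannot avoid re-running the partition-plus-maximal-inequality analysis of Theorem~\ref{thm:upper2}, now while tracking the model selection between nondecreasing and nonincreasing two-piece fits and absorbing the (possibly very large) misspecification bias of the fit that is \emph{not} selected. Everything else --- the $k=1$ case, the $k\ge 3$ upper bound via the chi-square maximal inequality, and all the lower bounds --- is routine.
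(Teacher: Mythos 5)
Your proposal is correct, but it follows a noticeably different route from the paper in the two places where there is actual work to do. For the $k\ge 3$ bounds the paper simply observes that the rate $\sigma^2k\log(en/k)$ is known: the upper bound is cited from the literature and the lower bound is obtained by embedding the sparse class $S_{\floor{(k-1)/2}}\subset\Theta_k$ and invoking the classical sparse-means lower bound; your self-contained union-over-subspaces/chi-square argument and your one-spike-per-block Fano-plus-tensorization construction prove exactly the same facts from scratch (your tensorization step is the same sufficiency/product device the paper uses inside Theorem~\ref{thm:lower}), which buys self-containment at the cost of length. The genuine divergence is the $k=2$ upper bound. The paper treats the least-squares estimator over $\Theta_2$ directly and asserts that the whole proof of Theorem~\ref{thm:upper2} goes through because a two-piece $\theta^*$ is automatically monotone; this requires re-verifying the structural facts of Lemma~\ref{lem:cun-hui} for the unconstrained two-piece fit. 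You instead split on which of $\wh{\theta}(\Theta_2^{\uparrow})$, $\wh{\theta}(\Theta_2^{\downarrow})$ wins the selection, use Theorem~\ref{thm:upper2} as a black box on the correctly oriented event, and on the mis-selected event bound the error by the normalized supremum $W^{\downarrow}$ over the wrongly oriented cone; this is more modular and avoids reproving the structural lemma, but shifts the burden onto showing $\E (W^{\downarrow})^2\lesssim\sigma^2\log\log(16n)$. One small imprecision there: after projecting out the fixed two-dimensional span of $\mathbf{1}$ and $\mathbf{1}_{(0:a^*]}$, the $b$-dependent direction is a normalized bridge-type process (e.g.\ $(S_b-(b/a^*)S_{a^*})/\sqrt{b(1-b/a^*)}$ on $(0:a^*]$, and its analogue on $(a^*:n]$), not literally a one-free-endpoint partial-sum process; splitting each of the two segments at its midpoint and anchoring at $0$, $a^*$, or $n$ reduces it to one-free-endpoint maxima plus $O_P(\sigma)$ fixed terms, after which Lemma~\ref{lem:simple-lil} gives the claimed $\sigma^2\log\log(16n)$ bound, so the step is sound but needs this extra reduction spelled out. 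The lower bounds for $k=1,2$ coincide with the paper's.
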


The upper bound in Theorem \ref{thm:upper1} can be achieved by the least-squares estimator $\wh{\theta}(\Theta_{k})=\argmin_{\theta\in\Theta_{k}}\|X-\theta^*\|^2$ when $k$ is known, or achieved by its penalized version when $k$ is unknown. The penalty can be chosen proportional to the minimax rate, following the classic approach in, for example, \cite{birge1993rates} and \cite{birge2001gaussian}. These estimators can be computed efficiently via dynamic programming \citep{friedrich2008complexity}.

We emphasize that the results for $k\geq 3$ are well known in the literature \citep{donoho1994minimax,birge2001gaussian,boysen2009consistencies,raskutti2011minimax,li2016fdr} and we claim no originality there. Instead, our stress is on comparing $\Theta_{k}^{\uparrow}$ and $\Theta_{k}$. First, it can be seen that the main difference between these two spaces is that the minimax rate of the former scales as $\sigma^2k\log\log(16n/k)$, while that of the latter scales as $\sigma^2k\log(en/k)$, for $k\geq 3$. The case $k=2$ is special, and both spaces have minimax rates $\log\log (16n)$. This is because the signals in $\Theta_{2}$ is either nondecreasing or nonincreasing. 


Secondly, we emphasize that the minimax rate of $\Theta_k$ is only for  the Gaussian observations $X\sim N(\theta^*,\sigma^2I_n)$. With regard to the upper bound, the assumption of Gaussian errors can be easily relaxed to sub-Gaussian errors. However, the sub-Gaussianity cannot be further relaxed, as illustrated below.
Consider the observation $X=\theta^*+Z\in\mathbb{R}^n$. Assume i.i.d. error variables $Z_1,...,Z_n\sim p_{\gamma}$, where the density function is specified as
\begin{equation}
p_{\gamma}(x)\propto \exp\Bigl(-|x|^{\gamma}\Bigr),\label{eq:gamma-dist}
\end{equation}
for some $\gamma\in(0,2]$. When $\gamma=2$, we recover the Gaussian-like (sub-Gaussian) error. For $\gamma\in(0,2)$, we get a heavier tail than the Gaussian one. The following proposition shows that the sub-Gaussian assumption cannot be relaxed.
\begin{proposition}\label{prop:imp}
Consider the error distribution (\ref{eq:gamma-dist}) for some $\gamma\in(0,2]$. For the space $\Theta_3$, we have the lower bound,
$$\inf_{\wh{\theta}}\sup_{\theta^*\in\Theta_3}\mathbb{E}\norm{\wh{\theta}-\theta^*}^2\geq c\Bigl\{\log (en)\Bigr\}^{2/\gamma},$$
for some universal constant $c>0$.
\end{proposition}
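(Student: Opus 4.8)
The plan is to exhibit a two-point (or few-point) family inside $\Theta_3$ whose members are indistinguishable with constant probability even though they are separated by order $\{\log(en)\}^{2/\gamma}$ in squared $\ell_2$ distance, and then invoke a standard testing/Le~Cam argument. The key observation is that for the heavy-tailed density $p_\gamma$ with $\gamma<2$, the maximum of $n$ i.i.d.\ copies of $|Z_i|$ is of order $\{\log n\}^{1/\gamma}$, so a single observation can be displaced by that much without the likelihood ratio blowing up. Concretely, fix an amplitude $h\asymp\{\log(en)\}^{1/\gamma}$ and consider the signal that is $0$ everywhere except on one short block of length $1$ (or a short block of constant length $m_0$), where it equals $h$; perturbing the location or the presence of that spike gives a family of vectors all lying in $\Theta_3$ (at most three constant pieces: a zero piece, a spike piece, another zero piece). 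Each pair differs by exactly $h^2$ (times the block length) in squared norm, which is the claimed rate.

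First I would pin down the right amplitude: choose $h$ so that $\exp(-h^\gamma)\asymp 1/n$, i.e.\ $h=(c\log n)^{1/\gamma}$ for a small constant $c$; then the probability that $\max_i |Z_i|\ge h$ is bounded away from $1$ (and, after scaling $c$, can be made as small as desired). Second, I would set up the comparison between $\P_0$ (all means zero) and a mixture $\P_1$ over the location $j\in[n]$ of the height-$h$ spike, or alternatively a simple two-point comparison between "no spike" and "spike at a fixed location." For the two-point version the likelihood ratio $\prod_i p_\gamma(X_i-\theta^{(1)}_i)/p_\gamma(X_i)$ concentrates: on the event $\{|Z_j|\le h\}$ it is bounded, and the complementary event has small probability, so the total variation distance between $\P_0$ and $\P_1$ is bounded below $1$; Le~Cam's two-point lemma then gives the minimax lower bound $\gtrsim h^2\asymp\{\log(en)\}^{2/\gamma}$. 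If a cleaner constant is wanted, the mixture-over-location version controls $\chi^2(\P_1\|\P_0)$ via $\E_0[(d\P_1/d\P_0)^2]=\frac1{n^2}\sum_{i,j}\E_0[L_iL_j]$, where $L_i$ is the single-coordinate likelihood ratio for a spike at $i$; the diagonal terms contribute $\frac1n \E_0 L_1^2$ and the off-diagonal terms are $\le 1$, so one needs $\E_0 L_1^2=\E[p_\gamma(Z-h)^2/p_\gamma(Z)^2]\lesssim n$, which is exactly what the choice of $h$ buys since $p_\gamma(z-h)/p_\gamma(z)\le e^{Ch^{\gamma-1}|z|+Ch^\gamma}$ and the relevant moment generating integral is finite and of order $e^{Ch^\gamma}\asymp n^{Cc}$.

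The main obstacle is controlling the likelihood ratio tail for the non-log-concave regime $\gamma<1$, where $p_\gamma$ is not even unimodal-smooth in a convenient way and the ratio $p_\gamma(z-h)/p_\gamma(z)$ is governed by $|{-}|z-h|^\gamma+|z|^\gamma|$; one must split into the bulk $|z|\lesssim h$ (where $||z-h|^\gamma-|z|^\gamma|\le h^\gamma$, a uniform bound) and the tail $|z|\gg h$ (where the difference is $\lesssim h|z|^{\gamma-1}\le h$ and is thus even smaller), and verify that in neither region does the ratio exceed $e^{O(h^\gamma)}=n^{O(c)}$, so picking $c$ small keeps $\E_0 L_1^2\le n$. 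A secondary technical point is that when $h$ is not an integer multiple of anything the separation is literally $h^2$ for a one-coordinate spike; to avoid the boundary case $n$ small one simply notes the bound is trivial (the rate is a constant) when $n=O(1)$, matching the $\log(en)$ normalization. Once these estimates are in place, Le~Cam (or Fano with the mixture) finishes the argument, and the same construction placed in a length-$m_0$ block with $m_0$ a fixed constant shows the phenomenon is robust to requiring genuinely three nontrivial pieces.
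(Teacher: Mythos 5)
Your construction is the right one and matches the paper's: single spikes of height $h\asymp(\log n)^{1/\gamma}$ placed at one coordinate lie in $\Theta_3$, and the paper's proof runs Fano's inequality over the packing set $T=\{h\,e_j\}_{j=1}^n$, using its Lemma~\ref{lem:KL-gamma} to bound the KL diameter by $C(\alpha+\alpha^\gamma)\log n$ against $\log\mathcal{M}\geq\log n$. Your mixture-over-location $\chi^2$ computation is a valid close cousin of that argument: the diagonal term $\E_0 L_1^2\lesssim e^{Ch^\gamma}=n^{Cc}$ divided by the mixture factor $1/n$ is exactly what makes it work, and this is the same mechanism (uncertainty over $n$ spike locations) that drives the paper's Fano bound.

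However, the route you present as primary --- the ``simple two-point comparison between `no spike' and `spike at a fixed location''' --- does not work, and the reasoning you give for it contains a genuine error. With the spike location fixed and $h=(c\log n)^{1/\gamma}\to\infty$, the two hypotheses are consistently distinguishable: under $\P_0$ the single-coordinate likelihood ratio is $L=\exp(-|Z_j-h|^\gamma+|Z_j|^\gamma)$, which for typical $Z_j$ (say $|Z_j|\leq h/2$) is at most $e^{-(h/2)^\gamma}=n^{-c'}\to 0$. What Le~Cam's lemma needs is the overlap $\int\min(d\P_0,d\P_1)=\E_0[\min(1,L)]$ to be bounded away from zero, i.e.\ $L$ bounded \emph{below} on an event of constant $\P_0$-probability; your observation that $L$ is bounded \emph{above} by $e^{O(h^\gamma)}$ on $\{|Z_j|\leq h\}$ is true but irrelevant. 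One checks directly that $\E_0[\min(1,L)]\leq \P_1(Z<h/2)+\P_0(Z\geq h/2)\to 0$, so $\TV(\P_0,\P_1)\to 1$ and the two-point bound is vacuous. The location uncertainty is therefore not an optional refinement ``for a cleaner constant'' but the essential ingredient; your own $\chi^2$ calculation shows this, since without the $1/n$ from averaging over locations the bound $\E_0L_1^2\asymp e^{h^\gamma}$ is useless. If you drop the two-point version and keep only the mixture (or, equivalently, Fano over the $n$ spike positions with the KL bound you sketch for the two regimes $\gamma\leq 1$ and $\gamma\in(1,2]$, which is precisely the paper's Lemma~\ref{lem:KL-gamma}), the proof is complete.
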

Since the desired minimax rate for $\Theta_3$ is $\sigma^2\log (en)$, Proposition \ref{prop:imp} implies that the minimax rate under the Gaussian assumption cannot be achieved unless $\gamma=2$. In other words, unlike Theorem \ref{thm:upper2}, a sub-Gaussian tail is necessary for the result of Theorem \ref{thm:upper1}, the second important difference between the two spaces $\Theta_{k}^{\uparrow}$ and $\Theta_{k}$.

We end this section with a relatively technical discussion of the difference between models $\Theta_k$ and $\Theta_k^\uparrow$. Denoting the estimated change points of $\hat\theta(\Theta_k^\uparrow)$ as $\{\wh{a}_j\}$. Consider the case where $\theta^*_i=\mu$ for $a\le i\le b$ and 
$a\le\wh{a}_{j-1} < \wh{a}_j\le b$. 
The error for $\wh{a}_{j-1}\le i\le  \wh{a}_j$ is $|\wb{X}_{(\wh{a}_{j-1}:\wh{a}_j]}-\mu|^2$. 
For simplicity of discussion, let us suppose $\wh{a}_j-\wh{a}_{j-1}$ is of order $n/k$. 
Without isotonic constraint, few additional structure is exploitable between $\wh{a}_{j-1}$ and $\wh{a}_j$, and the optimal fit is shown to suffer an extra logarithmic factor.  With isotonic constraint, on the other hand, the two change points $\wh{a}_{j-1}$ and $\wh{a}_j$ have an additional constraint:
\begin{equation*}
|\wb{X}_{(\wh{a}_{j-1}:\wh{a}_j]}-\mu|^2\leq |\wb{X}_{(a:\wh{a}_j]}-\mu|^2\vee|\wb{X}_{(\wh{a}_{j-1}:b]}-\mu|^2.
\end{equation*}
Now for each term on the right hand side above, one end point is random and the other is fixed. Therefore, both $|\wb{X}_{(a:\wh{a}_j]}-\mu|^2$ and $|\wb{X}_{(\wh{a}_{j-1}:b]}-\mu|^2$ are of order $k\log\log(16n/k)/n$, implied by the asymptotics of partial sum processes (cf. Lemma \ref{lem:simple-lil}).

\subsection{Implications for change-point detection}

The lower bound result in the paper is strongly related to the problem of determining the ``region of detectability" (ROD) in the change-point detection literature. On one hand, when there are multiple change-points, the ROD has been established in \cite{arias2005near}, where these authors show that in various settings a signal strength of the order at least $\sqrt{\log (en)/n}$ is necessary for consistent detection. A gap exists when there is only one change-point. 

The result of Theorem \ref{thm:lower} helps close this gap. As a matter of fact, by a slight modification of the proof of Theorem \ref{thm:lower} for the case $k=2$, it is straightforward to prove the following proposition. The result shows that it is impossible to differentiate the one-step function from a two-step function when the signal gap is of  order smaller than $\sqrt{\log\log (16n)/n}$. On the other hand, consistent detection of signal when the gap is of a comparable order has already been established (see, for example, Chapter 1.5 in \cite{csorgo1997limit}).
\begin{proposition}\label{cor:test} Let $\mathbb{E}_{\theta}$ stand for the expectation induced by $N(\theta,\sigma^2I_n)$. Define the following parameter space:
\[
\Theta_2(c) := \Big\{ \theta\in\Theta_2: (\mu_2-\mu_1)^2\cdot (a_1\wedge (n-a_1))>c\sigma^2\log\log (16n)  \Big\},
\]
where $\mu_1,\mu_2,a_1$ are defined in \eqref{eq:tkg}. We then have, for some small enough universal constant $c>0$,
\[
\inf_{0\leq \phi\leq 1}\Big\{\sup_{\theta\in\Theta_1}\mathbb{E}_\theta\phi+\sup_{\theta\in\Theta_2(c)}\mathbb{E}_{\theta}(1-\phi)\Big\}\geq c_1,
\]
where $c_1$ is another universal constant in $(0,1)$.
\end{proposition}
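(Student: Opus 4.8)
The plan is to turn the composite‑vs‑composite testing problem into a single one‑vs‑mixture comparison, reusing the packing vectors from the $k=2$ case of Theorem \ref{thm:lower}, and then to bound the resulting total variation through a $\chi^2$ computation. As a preliminary step I would dispose of small $n$: when $n$ is below an absolute threshold $n_0$, the separation demanded by $\Theta_2(c)$ is only a constant multiple of $\sigma^2$, so comparing $N(\{0\}^n,\sigma^2 I_n)$ with a single admissible two‑step alternative already forces a constant lower bound on the testing risk; hence I may assume $n\ge n_0$.

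\emph{Reduction.} Since $\{0\}^n\in\Theta_1$, I would lower bound $\sup_{\theta\in\Theta_1}\mathbb{E}_\theta\phi$ by $\mathbb{E}_0\phi$ under $P_0=N(\{0\}^n,\sigma^2 I_n)$. Put $L=\ceil{\log_2 n}$ and, for $\ell\in\{1,\dots,L\}$, let $\theta_\ell$ be exactly the vector used in the proof of Theorem \ref{thm:lower}: its last $\ceil{n2^{-\ell}}$ coordinates equal $h_\ell=\sqrt{\alpha\sigma^2 2^\ell\log\log_2 n/n}$ and the rest are $0$, with $\alpha\in(0,1/3)$ a small absolute constant. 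Granting that every $\theta_\ell\in\Theta_2(c)$, I would put the uniform prior on $\{\theta_1,\dots,\theta_L\}$, so that $\sup_{\theta\in\Theta_2(c)}\mathbb{E}_\theta(1-\phi)\ge\mathbb{E}_{\bar P}(1-\phi)$ with $\bar P=L^{-1}\sum_{\ell=1}^L N(\theta_\ell,\sigma^2 I_n)$; then the quantity to bound below is at least $\inf_\phi\{\mathbb{E}_0\phi+\mathbb{E}_{\bar P}(1-\phi)\}=1-\TV(P_0,\bar P)$. To see $\theta_\ell\in\Theta_2(c)$, note $\theta_\ell$ has the single change point $a_1=n-\ceil{n2^{-\ell}}$ and levels $\mu_1=0,\mu_2=h_\ell$, and $a_1\wedge(n-a_1)\ge\tfrac12\ceil{n2^{-\ell}}\ge\tfrac12 n2^{-\ell}$, whence $(\mu_2-\mu_1)^2(a_1\wedge(n-a_1))\ge\tfrac12\alpha\sigma^2\log\log_2 n$; since $\log\log_2 n\ge\tfrac12\log\log(16n)$ for $n\ge n_0$, taking $c=\alpha/5$ does it.

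\emph{Controlling $\TV(P_0,\bar P)$.} I would use $\TV(P_0,\bar P)\le\tfrac12\sqrt{\chi^2(\bar P\,\|\,P_0)}$ together with the Gaussian‑mixture identity
\begin{equation*}
\chi^2(\bar P\,\|\,P_0)+1=\frac{1}{L^2}\sum_{\ell=1}^L\sum_{\ell'=1}^L\exp\!\Big(\frac{\langle\theta_\ell,\theta_{\ell'}\rangle}{\sigma^2}\Big).
\end{equation*}
Because the support of $\theta_{\ell\vee\ell'}$ sits inside that of $\theta_{\ell\wedge\ell'}$, one gets $\langle\theta_\ell,\theta_{\ell'}\rangle=\ceil{n2^{-(\ell\vee\ell')}}h_\ell h_{\ell'}$, and bounding the ceiling yields $\langle\theta_\ell,\theta_{\ell'}\rangle/\sigma^2\le 3\alpha\log\log_2 n\cdot 2^{-|\ell-\ell'|/2}$. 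Writing $p=\log_2 n$ and $d=|\ell-\ell'|$, the diagonal contributes $L^{-1}(p^{3\alpha}-1)\asymp p^{3\alpha-1}\to0$, while the off‑diagonal part is at most $\tfrac{2}{L}\sum_{d\ge1}\big(p^{3\alpha 2^{-d/2}}-1\big)$; splitting this at $d^\star\asymp\log_2\log\log_2 n$, beyond which $3\alpha 2^{-d/2}\log p\le 3\alpha$, bounds the sum by $d^\star p^{3\alpha}+O(\alpha)$, so the off‑diagonal part is $O(d^\star p^{3\alpha-1})\to0$. Hence $\chi^2(\bar P\,\|\,P_0)\to0$, so $\TV(P_0,\bar P)\to0$ and the testing risk exceeds $3/4$ for $n\ge n_0$; combined with the bounded‑$n$ case this gives a universal $c_1\in(0,1)$.

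The only genuinely delicate point is the last step. A single two‑point comparison ($\theta_\ell$ against $\{0\}^n$) is useless here because $\chi^2(N(\theta_\ell,\sigma^2 I_n)\,\|\,P_0)=p^{\Theta(\alpha)}-1\to\infty$, so the dilution across the $\asymp\log n$ scales is essential; and the $\chi^2$ bound only closes because at most $\asymp\log\log n$ pairs of scales carry non‑negligible inner product — the same combinatorial mechanism that produces the iterated‑logarithm rate in Theorem \ref{thm:lower}. Getting the geometric decay $2^{-|\ell-\ell'|/2}$ of the cross terms right (through the nested supports and the ceiling estimates) and then verifying that the double sum stays bounded for small $\alpha$ is where the work lies.
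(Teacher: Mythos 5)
Your proposal is correct and follows essentially the same route as the paper's proof: Le Cam's two-point method with a uniform mixture over the same dyadic two-step vectors, a second-moment ($\chi^2$) computation of the likelihood ratio using the nested supports to get the $2^{-|\ell-\ell'|/2}$ decay of the cross terms, and a near/far-diagonal split of the double sum. The only differences are cosmetic (you verify membership in $\Theta_2(c)$ explicitly, handle bounded $n$ separately, and split at a slightly smaller threshold than the paper's $2\log_2 q$), so no further changes are needed.
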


Proposition \ref{cor:test} complements Theorem 2.3 in \cite{arias2005near}, and both results together give a clear picture of the ROD when one or multiple change-points are present.

\subsection{Minimax rates for unimodal piecewise constant functions}\label{sec:unimodal}

The class of unimodal functions is widely studied in the literature \citep{bickel1996some, birge1997estimation, shoung2001least, kollmann2014unimodal}. It is often studied side by side with the isotonic functions \citep{boyarshinov2006linear, stout2008unimodal}. In this section, we show that the techniques developed in this paper also lead to the derivation of the minimax rate of the class of unimodal piecewise constant functions. We define the parameter space of interest as follow,
\begin{align*}
\Theta_k^{\wedge} =& \Big\{\theta\in\mathbb{R}^n: \text{there exist }\{a_j\}_{j=0}^k\text{ and }\{\mu_j\}_{j=1}^k\text{ such that }\\
&~~0=a_0\leq a_1\leq\cdots\leq a_k=n,\mu_1\leq\cdots\leq\mu_{\ell-1}\leq\mu_{\ell}\geq\mu_{\ell+1}\geq\cdots\geq\mu_k, \\
&~~\text{ and }\theta_i=\mu_j\text{ for all }i\in(a_{j-1}:a_j]\Big\}.
\end{align*}
This class has been studied by \cite{chatterjee2015adaptive}, who provide an upper bound of order $\sigma^2k\log(en)$.
It is interesting to note the relation
$$\Theta_k^{\uparrow}\subset\Theta_k^{\wedge}\subset\Theta_k,
$$
which indicates that the minimax rate of $\Theta_k$ is between those of $\Theta_k^{\uparrow}$ and $\Theta_k$. The following theorem gives the exact minimax rate.
\begin{thm}\label{thm:unimodal}
For any $k\in[n]$, the minimax rate for the space $\Theta_{k}^{\wedge}$ is given by
$$\inf_{\wh{\theta}}\sup_{\theta\in\Theta_{k}^{\wedge}}\mathbb{E}\|\wh{\theta}-\theta^*\|^2\asymp\begin{cases}
\sigma^2, & k=1, \\
\sigma^2\log\log(16n), & k=2, \\
\sigma^2\log(en), & 3\leq k\leq \frac{\log(en)}{\log\log(16n)}, \\
\sigma^2k\log\log(16n/k), & k>\frac{\log(en)}{\log\log(16n)},
\end{cases}$$
where the expectation is taken over the distribution $X\sim N(\theta^*,\sigma^2I_n)$.
\end{thm}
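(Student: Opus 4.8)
The plan is to deduce the unimodal rate from the isotonic rate already established, treating the four regimes for $k$ separately and combining upper and lower bounds. The key structural observation is that any $\theta\in\Theta_k^\wedge$ splits at its mode index $a_\ell$ into a nondecreasing piece on $(0:a_\ell]$ with at most $\ell$ blocks and a nonincreasing piece on $(a_\ell:n]$ with at most $k-\ell+1$ blocks. For the \emph{upper bound}, I would fix a candidate mode location and number of pieces on each side, fit a constrained least-squares estimator on each side using the reduced isotonic machinery of Theorem \ref{thm:upper2} (the nonincreasing case is identical by symmetry $i\mapsto n+1-i$), and then pay a model-selection price for searching over the mode location (one of $\le n$ choices, contributing $O(\sigma^2\log(en))$) and over the split $k=\ell+(k-\ell+1)-1$ of pieces (at most $k$ choices, contributing $O(\sigma^2\log k)$). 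Concretely, running the penalized estimator of Section \ref{sec:adapt} simultaneously on both halves for every candidate mode, and taking the best overall fit, gives
$$\E\|\wh\theta-\theta^*\|^2\lesssim \sigma^2\log(en) + \min_{\ell}\Big[\sigma^2\ell\log\log(16n/\ell)\ind\{\ell\ge 2\} + \sigma^2(k-\ell+1)\log\log(16n/(k-\ell+1))\ind\{k-\ell+1\ge 2\}\Big],$$
which after optimizing over $\ell$ (balancing the two sides, each with $\approx k/2$ pieces) yields $\sigma^2\log(en) + \sigma^2 k\log\log(16n/k)$; taking the minimum of this with $\sigma^2\log(en)$-type bounds when $k$ is small reproduces all four cases. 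Since $\Theta_k^\wedge\subset\Theta_k$, one always also has the $\sigma^2 k\log(en/k)$ bound, but the above is sharper for the relevant range.

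For the \emph{lower bound}, I would argue in each regime. When $k=1$ the bound $c\sigma^2$ is trivial. When $k=2$, $\Theta_2^\wedge = \Theta_2^\uparrow\cup\Theta_2^\downarrow\supset\Theta_2^\uparrow$, so Theorem \ref{thm:lower} gives $c\sigma^2\log\log(16n)$ directly. When $k>\log(en)/\log\log(16n)$ the bound $c\sigma^2 k\log\log(16n/k)$ follows again from $\Theta_k^\uparrow\subset\Theta_k^\wedge$ and Theorem \ref{thm:lower}. The genuinely new piece is the middle regime $3\le k\le \log(en)/\log\log(16n)$, where we must show a lower bound of order $\sigma^2\log(en)$. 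Here I would build a hypothesis family exploiting that a single ``bump'' (one up-step followed by one down-step, i.e.\ an element of $\Theta_3^\wedge$) placed at an unknown location and an unknown dyadic scale behaves like a free two-change-point signal: for each dyadic length $2^m\le n$ and each of the $\asymp n/2^m$ disjoint intervals $I$ of that length, let $\theta_{m,I}$ be $\sqrt{\alpha\sigma^2\log(en)/2^m}$ on $I$ and $0$ elsewhere, which lies in $\Theta_3^\wedge\subset\Theta_k^\wedge$. Any two such vectors at the same scale are $\gtrsim\sqrt{\sigma^2\log(en)}$ apart, while all have squared norm $\asymp\sigma^2\log(en)$; the packing number of the whole family is $\gtrsim n$, so $\log\mathcal M\gtrsim\log(en)$ and the KL-type diameter is $\lesssim\sigma^2\log(en)$, and Fano (Proposition \ref{prop:fano}) gives the $\sigma^2\log(en)$ lower bound. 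This is exactly the standard single-bump construction used for $\Theta_3$; the only thing to check is that it survives inside the unimodal class, which it does since a bump is unimodal.

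The main obstacle I anticipate is the upper-bound bookkeeping: making the model-selection penalty for the mode location and the piece split genuinely additive with the two $\log\log$ terms, rather than multiplicative or producing a spurious $\log\log$ on the $\log(en)$ term. The cleanest route is to note that searching over the $\le n$ mode positions and $\le k$ splits is a finite model-selection problem with $\le nk\le n^2$ models, so a penalty of the form $\sigma^2\log(n^2)=O(\sigma^2\log(en))$ suffices by a standard union bound over Gaussian (or, under (\ref{eq:moment}), moment-controlled) deviations — this is the same device used to prove Theorem \ref{thm:adaptive-oracle}, applied now over the enlarged but still polynomial-size model list. One then invokes Theorem \ref{thm:upper2} conditionally on each fixed mode and split to control the within-model error by $\sigma^2\ell\log\log(16n/\ell)+\sigma^2(k-\ell+1)\log\log(16n/(k-\ell+1))$, and optimizes. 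A secondary check is verifying that in the regime $3\le k\le\log(en)/\log\log(16n)$ the term $k\log\log(16n/k)$ is indeed dominated by $\log(en)$, so that the upper bound simplifies to $\sigma^2\log(en)$ there, matching the lower bound and closing the gap.
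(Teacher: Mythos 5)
Your overall architecture coincides with the paper's: split a unimodal signal at its mode into two monotone pieces, control each side by Theorem \ref{thm:upper2}, pay an additive $O(\sigma^2\log(en))$ for searching over the $O(n^2)$ mode/split configurations, and obtain the lower bound by combining $\Theta_k^{\uparrow}\subset\Theta_k^{\wedge}$ (Theorem \ref{thm:lower}) with a single-bump construction in the middle regime. The genuine difference is the selection device. The paper avoids your self-identified ``main obstacle'' by aggregation in the style of \cite{leung2006information} with a sample split: it forms $U=X+W$ and $V=X-W$, fits the per-configuration reduced isotonic estimators on $U$, and aggregates them with exponentially weighted mixtures computed from $V$; this requires only the in-expectation risk bound of Theorem \ref{thm:upper2} for each fixed configuration and adds exactly $8\sigma^2\log|\Omega_k|\lesssim \sigma^2\log(en)$. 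Your alternative---penalized least squares over the $\leq n^2$ models with penalty $\asymp\sigma^2\log(en)$---is workable under Gaussian noise, but it is not ``the same device used to prove Theorem \ref{thm:adaptive-oracle}'': that proof contains no union bound over models and instead reruns the iterated-logarithm lemmas with a random $\wh{k}$. To make your route rigorous you would need high-probability (not merely in-expectation) versions of the $k\log\log$ within-model stochastic bounds so that they survive the union over polynomially many models; such tails exist (cf.\ Lemma \ref{lem:er12-all-k-2-part}), but assembling them is exactly the bookkeeping the aggregation step renders unnecessary. On the lower bound, your dyadic bump family restricted to a single scale is precisely the $1$-sparse family; the paper simply notes $S_1\subset\Theta_3^{\wedge}\subset\Theta_k^{\wedge}$ and quotes the sparse normal-means bound $c\sigma^2\log(en)$, so the two arguments are essentially identical (if you retain several scales, verify separation only within a fixed scale, since cross-scale pairs need not be well separated; Proposition \ref{prop:fano} then applies as you say).

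One concrete oversight: your claim that ``taking the minimum with $\sigma^2\log(en)$-type bounds reproduces all four cases'' fails at $k=2$, where the rate is $\sigma^2\log\log(16n)\ll\sigma^2\log(en)$, so a generic mode-search penalty of order $\log(en)$ is too large. The fix is the paper's observation that $\Theta_k^{\wedge}=\Theta_k$ for $k\leq 2$: a two-piece unimodal vector is monotone (nondecreasing or nonincreasing), hence no mode search is needed---fit the increasing and the decreasing two-piece least squares and choose between these two candidates, which costs only $O(\sigma^2)$ and recovers the $\sigma^2\log\log(16n)$ bound as in Theorem \ref{thm:upper1}. With that amendment and a rigorous treatment of the selection step (or by switching to the aggregation argument), your proof goes through.
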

Interestingly, we observe that the minimax rates have four phases, and can have either a logarithmic behavior or an iterated logarithmic behavior, depending on the regime of $k$. 
When $k=2$, the minimax rate is driven by the isotonic structure. When $3\leq k\leq \frac{\log(en)}{\log\log(16n)}$, the rate $\sigma^2\log(en)$ results from the uncertainty of the mode of the function.
Finally,
the $\sigma^2k\log\log(16n/k)$ rate for a large $k$ is again driven by the isotonic structure of a unimodal function. 


\nocite{han2016multivariate,kim2016adaptation}

\subsection{Minimax rates under $\ell_p$ loss with $1\leq p<2$} \label{sec:general-lp}

Section \ref{sec:upper} gives the minimax rate of the space $\Theta_{k}^\uparrow$ with respect to the squared $\ell_2$ loss. In particular, Theorems \ref{thm:upper2} and \ref{thm:lower} show that the minimax rate involves an interesting iterated logarithmic term. This is in contrast with the original isotonic regression estimator $\hat\theta(\Theta_n^{\uparrow})$, which is of an additional logarithmic term in view of Proposition \ref{prop:lower-LSE}. 

In this section we present an interesting phenomenon that a reversed argument applies to an $\ell_p$ loss with $1\leq p<2$. For this, we first reveal that the difference between the minimax risk of $\Theta_{k}^\uparrow$ and the rate of $\hat\theta(\Theta_n^{\uparrow})$ will vanish when we consider an $\ell_p$ loss with $1\leq p<2$. 

\begin{proposition}\label{prop:lp-risk}
Consider $X=\theta^*+Z$ with $Z_1,\ldots,Z_n$ independent, mean zero, and satisfying $\E|Z_i/\sigma|^2\leq C_1$ for some universal constant $C_1>0$. We then have, for any $k\in[n]$ and $1\leq p<2$, 
\[
\sup_{\theta^*\in\Theta_{k}^{\uparrow}}\E\norm{\hat\theta(\Theta_n^{\uparrow})-\theta^*}_p^p\leq C\sigma^pn(k/n)^{p/2}
\]
for some universal constant $C>0$. On the other hand, there exists some universal constant $c>0$ such that, for any $1\leq p<2$,
\[
\inf_{\hat\theta}\sup_{\theta^*\in\Theta_{k}^{\uparrow}}\E\norm{\hat\theta-\theta^*}^p_p\geq c\sigma^pn(k/n)^{p/2},
\]
where the infimum is taken over all measurable functions of $X$ and the expectation is taken under which $X\sim N(\theta^*,\sigma^2I_n)$.
\end{proposition}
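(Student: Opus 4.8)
The plan is to prove the two inequalities separately; the upper bound is the substantive one, and it is precisely there that the logarithmic factor of the $\ell_2$ theory disappears.

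\emph{Upper bound.} Fix $\theta^*\in\Theta_{k}^{\uparrow}$ with pieces $(a_{j-1}:a_j]$, values $\mu_1\le\cdots\le\mu_k$, and piece lengths $n_j=a_j-a_{j-1}$, and write $\hat\theta^{(iso)}=\hat\theta(\Theta_n^{\uparrow})$. I would start from the classical max--min representation $\hat\theta^{(iso)}_i=\max_{s\le i}\min_{t\ge i}\bar X_{[s:t]}$, where $\bar X_{[s:t]}$ denotes the average of $X_s,\dots,X_t$. Centering at the local level $\mu_j$ for $i$ in the $j$-th piece and using that $\theta^*$ is nondecreasing, one obtains the one-sided estimates $(\hat\theta^{(iso)}_i-\mu_j)_+\le\max_{s\le i}\big|\bar Z_{[s:a_j]}\big|$ and $(\hat\theta^{(iso)}_i-\mu_j)_-\le\max_{t\ge i}\big|\bar Z_{[a_{j-1}+1:\,t]}\big|$, the point being that in every window on the right-hand side one endpoint is pinned to a piece boundary, which forces the window length to be at least $a_j-i+1$ (resp.\ $i-a_{j-1}$) and keeps $\bar\theta^*$ over that window on the correct side of $\mu_j$. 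I would then peel the window length dyadically and apply Doob's $L^2$ maximal inequality to the partial sums of the $Z_i$ (this is the only place the moment condition $\E|Z_i/\sigma|^2\le C_1$ is used), together with Jensen's inequality (concavity of $x\mapsto x^{p/2}$, valid since $p<2$), to reach the \emph{pointwise} bound $\E\big|\hat\theta^{(iso)}_i-\theta^*_i\big|^p\le C\sigma^p\,\delta_i^{-p/2}$, where $\delta_i:=\min(i-a_{j-1},\,a_j-i+1)$ is essentially the distance from $i$ to its nearest piece boundary. Summing over $i$ in piece $j$ uses only $\sum_{v=1}^{n_j}v^{-p/2}\le C_p\,n_j^{1-p/2}$ — which is where $p<2$ is essential, since this sum diverges logarithmically at $p=2$ — and finally concavity of $x\mapsto x^{1-p/2}$ together with $\sum_j n_j=n$ gives $\sum_{j=1}^k n_j^{1-p/2}\le k(n/k)^{1-p/2}=n(k/n)^{p/2}$, which is the asserted bound.

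\emph{Lower bound.} For the matching lower bound I would use a routine Assouad argument. Partition $[n]$ into $k$ consecutive blocks of length $m\asymp n/k$ and, for $\omega\in\{0,1\}^k$, let $\theta^{\omega}$ equal $(2j+\omega_j)\delta$ on the $j$-th block, with $\delta:=c_0\sigma/\sqrt m$. The spacing guarantees that $\theta^{\omega}$ is automatically nondecreasing with at most $k$ pieces, hence $\theta^{\omega}\in\Theta_{k}^{\uparrow}$, and the estimation problem decouples across blocks. Flipping the $j$-th bit changes $\theta^{\omega}$ by $\delta$ on $m$ coordinates — a change of $m\delta^p$ in the $\ell_p^p$ loss — while the Kullback--Leibler divergence between the two induced Gaussian laws equals $m\delta^2/(2\sigma^2)=c_0^2/2$, a small constant. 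Assouad's lemma then yields $\inf_{\hat\theta}\sup_{\theta^*\in\Theta_{k}^{\uparrow}}\E\|\hat\theta-\theta^*\|_p^p\gtrsim k\cdot m\delta^p\asymp\sigma^p k\,m^{1-p/2}\asymp\sigma^p n(k/n)^{p/2}$.

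\emph{Main obstacle.} The crux is the upper bound, and specifically the avoidance of a spurious factor $(\log(en/k))^{p/2}$: the tempting shortcut — bound the $\ell_2$ risk of $\hat\theta(\Theta_n^{\uparrow})$ on each piece by Zhang's estimate $\asymp\sigma^2\log(en_j)$ and pass to $\ell_p$ by H\"older — carries exactly this factor. Removing it forces the sharp pointwise bound $\E|\hat\theta^{(iso)}_i-\theta^*_i|^p\lesssim\sigma^p\delta_i^{-p/2}$, which in turn hinges on anchoring the outer endpoint of each max--min window at a piece boundary (so the windows are long) and on a careful dyadic application of Doob's inequality; once this is in hand, everything reduces to elementary summation identities. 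The lower bound is entirely standard.
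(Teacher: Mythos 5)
Your proposal is correct and, in substance, coincides with the paper's own route: the paper simply cites Theorem 2.1 of Zhang (2002) for the upper bound and a classical reference for the lower bound, and your pointwise estimate $\E|\hat\theta^{(iso)}_i-\theta^*_i|^p\lesssim\sigma^p\delta_i^{-p/2}$ obtained from the max--min representation with dyadic peeling and Doob's inequality, followed by $\sum_j n_j^{1-p/2}\le n(k/n)^{p/2}$, is exactly the content of that cited result, while your Assouad construction is the standard classical lower bound. In effect you have written out, self-contained and correctly, the argument the paper invokes by citation.
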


Secondly we show that, quite interestingly, the reduced isotonic regression estimator cannot recover the above minimax risk under an $\ell_p$ loss with $1\leq p<2$, even if it is the maximum likelihood estimator of the truth.   

\begin{proposition}\label{prop:lp-reduced}
Consider $X=\theta^*+Z$ with $Z\sim N(0, \sigma^2I_n)$. Then, for any $1\leq p\leq 2$ and $2\leq k\leq n$, we have
\[
\sup_{\theta^*\in\Theta_{k}^\uparrow}\E\norm{\hat\theta(\Theta_{k}^\uparrow)-\theta^*}_p^p \asymp \sigma^pn\{k\log\log(16n/k)/n\}^{p/2}.
\]
\end{proposition}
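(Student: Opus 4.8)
For $1\le p\le 2$ the power-mean (Hölder) inequality gives $\|v\|_p^p\le n^{1-p/2}\|v\|_2^{p}$ for every $v\in\reals^n$, so Jensen's inequality applied to the concave map $t\mapsto t^{p/2}$ yields
\[
\E\|\hat\theta(\Theta_k^\uparrow)-\theta^*\|_p^p\le n^{1-p/2}\bigl(\E\|\hat\theta(\Theta_k^\uparrow)-\theta^*\|^2\bigr)^{p/2}.
\]
Since $\theta^*\in\Theta_k^\uparrow$, the approximation-error term in Theorem \ref{thm:upper2} vanishes and $\E\|\hat\theta(\Theta_k^\uparrow)-\theta^*\|^2\le C\sigma^2 k\log\log(16n/k)$ for $k\ge2$; substituting gives $\E\|\hat\theta(\Theta_k^\uparrow)-\theta^*\|_p^p\lesssim\sigma^p n\{k\log\log(16n/k)/n\}^{p/2}$, the upper half of the claim. (At $p=2$ the two halves are simply Theorems \ref{thm:upper2} and \ref{thm:lower}.)

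\textbf{Lower bound.} It is enough to exhibit one $\theta^*\in\Theta_k^\uparrow$ with $\E\|\hat\theta(\Theta_k^\uparrow)-\theta^*\|_p^p\gtrsim\sigma^p n\{k\log\log(16n/k)/n\}^{p/2}$. The plan is to reuse the construction behind the minimax lower bound (proof of Theorem \ref{thm:lower}) but to track the \emph{spatial} behaviour of this particular least-squares fit. Concretely I would (i) split $[n]$ into $m\asymp k$ consecutive cells of length $\asymp n/k$ and, inside each cell, install a short isotonic sub-staircase as in the proof of Theorem \ref{thm:lower}, with increments tuned to the critical scale $\asymp\sigma\sqrt{(k/n)\log\log(16n/k)}$, so that $\theta^*$ has at most $k$ pieces and lies in $\Theta_k^\uparrow$; (ii) by Lemma \ref{lem:cun-hui} write $\hat\theta(\Theta_k^\uparrow)$ as a block-average fit with knots $\{\wh{a}_j\}$ and exploit the monotonicity constraint exactly as in Remark \ref{remark:R1}, namely the domination $|\wb{X}_{(\wh{a}_{j-1}:\wh{a}_j]}-\mu|^2\le|\wb{X}_{(a:\wh{a}_j]}-\mu|^2\vee|\wb{X}_{(\wh{a}_{j-1}:b]}-\mu|^2$ with one endpoint pinned at a true knot, to reduce matters to partial-sum fluctuations with one fixed endpoint; (iii) show, via a lower-bound (reverse-LIL) companion of Lemma \ref{lem:simple-lil}, that on a constant fraction of the cells the fit is \emph{forced} to carry an error of magnitude $\asymp\sigma\sqrt{(k/n)\log\log(16n/k)}$ over a block of length $\asymp n/k$; (iv) sum, $\asymp m\cdot(n/k)\cdot\bigl(\sigma\sqrt{(k/n)\log\log(16n/k)}\bigr)^p\asymp\sigma^p n\{k\log\log(16n/k)/n\}^{p/2}$. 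A convenient variant is to randomize the sub-staircase increments and take expectations, deducing the existence of a good deterministic $\theta^*$.

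\textbf{Main difficulty.} The crux is step (iii): for $p<2$ it is not enough that the error have the right $\ell_2$ energy, since an error of that energy concentrated on a short block is cheap in $\ell_p^p$. The construction must therefore supply just enough sub-structure to \emph{pin} the estimated knots $\wh{a}_j$ near the true ones — forcing balanced error blocks of length $\asymp n/k$ — while adding so little signal that the fit cannot simply interpolate $\theta^*$ and thereby recover the minimax $\ell_p$ rate $\sigma^p n(k/n)^{p/2}$ of Proposition \ref{prop:lp-risk}. Turning the heuristic of Remark \ref{remark:R1} around, i.e.\ producing a matching \emph{lower} bound on $|\wb{X}_{(a:\wh{a}_j]}-\mu|$ for the knot $\wh{a}_j$ actually selected by least squares, uniformly over the cells, is where the real work lies; for $p=2$ this step is vacuous and the statement follows directly from Theorems \ref{thm:upper2} and \ref{thm:lower}.
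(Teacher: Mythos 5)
Your upper bound coincides with the paper's (Jensen/H\"older from the $\ell_2$ oracle bound of Theorem \ref{thm:upper2}), and your reduction of the general-$k$ lower bound to independent cells with well-separated levels is also how the paper proceeds (as in Proposition \ref{prop:lower-LSE}). The problem is that your step (iii) --- the only step that actually produces the iterated-logarithm factor per coordinate over blocks of length $\asymp n/k$ --- is left as ``where the real work lies,'' and the mechanism you propose for it points in the wrong direction. If the estimated knots are \emph{pinned near the true ones}, each fitted level is essentially a sample mean over a block of length $\asymp n/k$, whose deviation is $\asymp\sigma\sqrt{k/n}$ per coordinate, i.e.\ the parametric rate of Proposition \ref{prop:lp-risk} with no $\log\log$ factor; and the only way a block average with one endpoint pinned at a true knot attains LIL-size deviation $\asymp\sigma\sqrt{(k/n)\log\log(16n/k)}$ is when the free endpoint makes the block short --- exactly the scenario you yourself note is cheap in $\ell_p^p$ for $p<2$. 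So neither regime accessible to your ``pinning plus reverse-LIL'' scheme yields an error of size $\eta\asymp\sigma\sqrt{(k/n)\log\log(16n/k)}$ spread over $\asymp n/k$ coordinates, and no reverse analogue of Lemma \ref{lem:simple-lil} is supplied or obviously available to force it.

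The paper's proof exploits the opposite phenomenon: within each cell (take $k=2$, cell length $n$, $\sigma=1$) it plants a \emph{single} jump of size $\eta=\sqrt{c\log\log n/n}$ with $c$ \emph{small}, and shows that the least-squares change point $\wh a$, which maximizes $\Lambda_a=a\wb{X}_{(0:a]}^2+(n-a)\wb{X}_{(a:n]}^2$, then runs \emph{away} from the true knot: by Darling--Erd\H{o}s/LIL-type results of Cs\"org\H{o}--Horv\'ath, the noise maximum over all $a$ lives at level $2\log\log n$ and is achieved near the boundary, while restricting $a$ to the middle gains at most $\asymp c\log\log n$ from the true jump plus an $O(\log\log\log n)$ noise term, so for small $c$ one has $\wh a<n/\log n$ or $n-\wh a<n/\log n$ with probability tending to one. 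On that event the fitted level on the long block averages across the jump and is biased by $\asymp\eta$ on $\asymp n$ coordinates, which gives the $\ell_p^p$ lower bound $\gtrsim n^{1-p/2}(\log\log n)^{p/2}$ once the stochastic part of the block mean is removed via the maximal bound $\E\max_a a|\wb{Z}_{(a:n]}|^p\lesssim n^{1-p/2}$ (Lemma \ref{lem:lp1}); the general $k$ then follows by tiling. In short, the missing idea is that the jump must be tuned just \emph{below} the LSE's detection threshold so that the knot is mislocated macroscopically and the error is a bias spread over a long block, rather than a noise fluctuation pinned near a correctly located knot; without this (or an equivalent argument) your lower bound does not go through for $1\le p<2$.
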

Unlike the estimator $\wh{\theta}(\Theta_n^{\uparrow})$, for $\wh{\theta}(\Theta_k^{\uparrow})$, the iterated logarithmic term does not disappear when an $\ell_p$ loss with $1\leq p<2$ is considered. Since Proposition \ref{prop:lp-reduced} gives both upper and lower bounds for the $\ell_p$ risk, the reduced isotonic regression estimator $\wh{\theta}(\Theta_k^{\uparrow})$ is not optimal for the class $\Theta_k^{\uparrow}$ when $1\leq p<2$, compared with the minimax rate given in Proposition \ref{prop:lp-risk}. This indicates that, compared to the classical isotonic regression estimator, the performance of the reduced isotonic regression estimator hinges more on its definition, that is, minimizing the squared $\ell_2$ risk. This interesting phenomenon is summarized by the following table. The rates displayed are for the normalized $\ell_p$ loss $\|\hat\theta-\theta^*\|_p^p/(n\sigma^p)$.

\begin{center}
  \begin{tabular}{ l | c | c | r }
    \hline
     & minimax rate & $\wh{\theta}(\Theta_k^{\uparrow})$ & $\wh{\theta}(\Theta_n^{\uparrow})$\\ \hline
    $1\leq p<2$ & $\left(\frac{k}{n}\right)^{p/2}$ & $\left(\frac{k\log\log(16n/k)}{n}\right)^{p/2}$ & $\left(\frac{k}{n}\right)^{p/2}$\\ \hline
    $p=2$ & $\frac{k\log\log(16n/k)}{n}$ & $\frac{k\log\log(16n/k)}{n}$ & $\frac{k\log(en/k)}{n}$\\
    \hline
  \end{tabular}
\end{center}

\section{Proofs}\label{sec:proof}

This section contains the proofs of the main results in Sections \ref{sec:upper} and \ref{sec:adapt}, with the remaining proofs and auxiliary lemmas relegated to the supplement. In the sequel, by convention the summation over an empty set is set to be 0.


\subsection{A critical lemma}\label{sec:LEMMA}
Before stating the proofs of all theorems in the paper, we first present a very important lemma that characterizes the solution $\wh{\theta}(\Theta_k^{\uparrow})$ of the reduced isotonic regression (\ref{eq:ls}). Below, we use the notation $\wh{\theta}^{(k)}$ for $\wh{\theta}(\Theta_k^{\uparrow})$, and recall the set of knots of $\wh{\theta}^{(k)}=\{\wh{a}_j\}$ is denoted as $\wh{A}_k$.
\begin{lemma}\label{lem:cun-hui}
The following properties of estimator $\wh{\theta}^{(k)}=\wh{\theta}(\Theta_k^{\uparrow})$ hold.
\begin{enumerate}
\item For each $j$, $\wh{\theta}_i^{(k)}=\wb{X}_{(\wh{a}_{j-1}:\wh{a}_j]}$ for all $i\in(\wh{a}_{j-1}:\wh{a}_j]$. 
\item For each $j$, we have $\wb{X}_{(s:\wh{a}_j]}<\frac{\wh{\theta}_{\wh{a}_j}^{(k)}+\wh{\theta}^{(k)}_{\wh{a}_{j+1}}}{2}<\wb{X}_{(\wh{a}_j:t]}$ for all $0\leq s<\wh{a}_j<t\leq n$. As a consequence, $\wh{\theta}_{\wh{a}_j}^{(n)}<\frac{\wh{\theta}^{(k)}_{\wh{a}_j}+\wh{\theta}^{(k)}_{\wh{a}_{j+1}}}{2}<\wh{\theta}_{\wh{a}_j+1}^{(n)}$.
\item The set of knots satisfies $\wh{A}_k\subset\wh{A}_n$.
\end{enumerate}
\end{lemma}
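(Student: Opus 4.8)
}
Write the genuine knots of $\wh\theta^{(k)}$ as $0=\wh a_0<\wh a_1<\cdots<\wh a_m=n$ with $m\le k$, and let $\wh\mu_1<\cdots<\wh\mu_m$ be the corresponding values; the strict inequalities hold because $\wh\theta^{(k)}$ is nondecreasing and these indices are exactly where it changes. My plan is: (i) prove Part~1 by perturbing a block value; (ii) prove Part~2 by perturbing a knot and upgrading the resulting weak inequalities to strict ones via Part~1; (iii) deduce the ``consequence'' in Part~2 and all of Part~3 from Part~2 together with the classical min--max representation of isotonic regression $\wh\theta^{(n)}_i=\min_{t\ge i}\max_{s<i}\wb{X}_{(s:t]}=\max_{s<i}\min_{t\ge i}\wb{X}_{(s:t]}$ (see e.g.\ \cite{robertson1988order}). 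For Part~1 itself, I would fix a block $(\wh a_{j-1}:\wh a_j]$ and note that, because $\wh\mu_{j-1}<\wh\mu_j<\wh\mu_{j+1}$ (with an obvious one-sided modification when $j\in\{1,m\}$), replacing $\wh\mu_j$ by $\wh\mu_j+\delta$ keeps the vector in $\Theta_k^\uparrow$ for all small $|\delta|$; the squared loss is then a parabola in $\delta$ whose feasible minimum must be at $\delta=0$, which forces $\wh\mu_j=\wb{X}_{(\wh a_{j-1}:\wh a_j]}$.

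For Part~2, fix an internal knot $\wh a_j$ and set $c_j=(\wh\mu_j+\wh\mu_{j+1})/2$, so $\wh\mu_j<c_j<\wh\mu_{j+1}$. For $0\le s<\wh a_j$, I would compare $\wh\theta^{(k)}$ with the competitor $\theta'$ obtained by moving the knot $\wh a_j$ left to $s$, i.e.\ giving every index in $(s:\wh a_j]$ the value $\wh\mu_{j+1}$; since $\wh\mu_{j-1}\le\wh\mu_j<\wh\mu_{j+1}$, this $\theta'$ still lies in $\Theta_k^\uparrow$, so $\|X-\theta'\|^2\ge\|X-\wh\theta^{(k)}\|^2$. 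Expanding the difference and cancelling the positive factor $(\wh\mu_{j+1}-\wh\mu_j)(\wh a_j-s)$ gives $\wb{X}_{(s:\wh a_j]}\le c_j$. To make this strict, I would observe that equality would make $\theta'$ a minimizer too, so Part~1 applied to $\theta'$ would force its block $(s:\wh a_{j+1}]$ to carry value $\wb{X}_{(s:\wh a_{j+1}]}$; since that value is $\wh\mu_{j+1}$ by construction and $\wb{X}_{(\wh a_j:\wh a_{j+1}]}=\wh\mu_{j+1}$ by Part~1 for $\wh\theta^{(k)}$, this would force $\wb{X}_{(s:\wh a_j]}=\wh\mu_{j+1}>c_j$, a contradiction. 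The companion bound $\wb{X}_{(\wh a_j:t]}>c_j$ for $\wh a_j<t\le n$ follows by the mirror-image argument (move $\wh a_j$ right to $t$, giving $(\wh a_j:t]$ the value $\wh\mu_j$).

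For the ``consequence'' and Part~3, I would plug Part~2 into the min--max identity: taking $t=\wh a_j$ in the first form gives $\wh\theta^{(n)}_{\wh a_j}\le\max_{s<\wh a_j}\wb{X}_{(s:\wh a_j]}<c_j$, and taking $s=\wh a_j$ in the second form at index $\wh a_j+1$ gives $\wh\theta^{(n)}_{\wh a_j+1}\ge\min_{t\ge\wh a_j+1}\wb{X}_{(\wh a_j:t]}>c_j$ (both maxima/minima being over finitely many indices). This gives $\wh\theta^{(n)}_{\wh a_j}<c_j<\wh\theta^{(n)}_{\wh a_j+1}$, which is exactly the stated consequence; and the strict ordering $\wh\theta^{(n)}_{\wh a_j}<\wh\theta^{(n)}_{\wh a_j+1}$ says $\wh a_j$ is a knot of $\wh\theta^{(n)}$, so $\wh A_k\subset\wh A_n$.

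The step I expect to be the main obstacle is the strictness in Part~2: the direct one-sided perturbation only delivers ``$\le$'', and closing the gap requires the somewhat self-referential use of Part~1 on the alternative minimizer $\theta'$. Everything else is bookkeeping, but I would be careful with the degenerate cases $j\in\{1,m\}$ (only one-sided value perturbations are feasible in Part~1) and $s=\wh a_{j-1}$ or $t=\wh a_{j+1}$ (a block vanishes, which merely decreases the piece count and is still admissible), and with checking that the block $(s:\wh a_{j+1}]$ of $\theta'$ is a genuine block so that Part~1 indeed applies to it.
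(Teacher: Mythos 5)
Your Parts 1 and 3 are sound, and your Part 2 argument is correct when the interval being averaged sits inside a single block of $\wh\theta^{(k)}$, i.e.\ for $\wh a_{j-1}\le s<\wh a_j$ (and, mirrored, $\wh a_j<t\le\wh a_{j+1}$). But the lemma asserts the inequalities for \emph{all} $0\le s<\wh a_j<t\le n$, and your key algebraic step --- ``expanding the difference and cancelling the positive factor $(\wh\mu_{j+1}-\wh\mu_j)(\wh a_j-s)$'' --- is only valid when $\wh\theta^{(k)}$ is constantly equal to $\wh\mu_j$ on $(s:\wh a_j]$. When $s<\wh a_{j-1}$ the competitor $\theta'$ is still feasible (the piece count does not increase), but the loss difference decomposes blockwise as $\sum_{i'} n_{i'}\bigl(\wh\mu_{i'}-\wh\mu_{j+1}\bigr)\bigl(2\wb{X}_{B_{i'}}-\wh\mu_{i'}-\wh\mu_{j+1}\bigr)$ over the block portions $B_{i'}$ of $(s:\wh a_j]$, and this does not cancel to the desired statement $\wb{X}_{(s:\wh a_j]}\le c_j$ with $c_j=(\wh\mu_j+\wh\mu_{j+1})/2$ (the weights and the centres differ across blocks); the equality-implies-tie step you use for strictness breaks for the same reason. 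The fix is exactly the reduction the paper flags with ``it suffices to consider $\wh a_j<t<\wh a_{j+1}$'': for $s<\wh a_{j-1}$ write $\wb{X}_{(s:\wh a_j]}$ as a convex combination of $\wb{X}_{(s:\wh a_{j-1}]}$ and $\wb{X}_{(\wh a_{j-1}:\wh a_j]}=\wh\mu_j<c_j$, and note that the first term is $<(\wh\mu_{j-1}+\wh\mu_j)/2<c_j$ by the already-established claim at knot $j-1$ (induct over knots, and symmetrically for $t>\wh a_{j+1}$). With this one additional step your proof is complete.

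Otherwise your route is correct and genuinely different in execution from the paper's: for Part 1 you perturb the block value directly, whereas the paper rewrites the problem as a weighted isotonic regression and invokes its min--max formula; for the within-block case of Part 2 your single knot-moving competitor, with strictness obtained by observing that a tie would make $\theta'$ a second minimizer to which Part 1 applies on its genuine block $(s:\wh a_{j+1}]$, replaces the paper's three-case comparison; the passage from Part 2 to the ``consequence'' and to $\wh A_k\subset\wh A_n$ via the min--max representation of $\wh\theta^{(n)}$ is the same as in the paper. One small correction: at $j\in\{1,m\}$ two-sided perturbations of the block value remain feasible (the missing neighbour imposes no constraint), and you need both directions to force equality with the block mean, so your ``one-sided modification'' caveat is unnecessary and, taken literally, would not suffice.
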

These three results in Lemma \ref{lem:cun-hui} are all deterministic consequences of the optimization problem (\ref{eq:ls}). The first conclusion asserts that given the set of knots $\wh{A}_k$, the value of $\wh{\theta}^{(k)}_i$ is a simple average of $X$ in each block $(\wh{a}_{j-1}:\wh{a}_j]$. The second conclusion is due to the isotonic constraint in (\ref{eq:ls}), and is also the reason why we can apply a non-asymptotic law of iterated logarithm bound for the risk (see the proof of Theorem \ref{thm:upper2}). Finally, the last conclusion $\wh{A}_k\subset\wh{A}_n$ leads to the efficient computational strategy we discuss in Section \ref{sec:compute}. The proof of the lemma is given below.
\begin{proof}[Proof of Lemma \ref{lem:cun-hui}]
For notational simplicity, we use $\wh{\theta}$ for $\wh{\theta}^{(k)}=\wh{\theta}(\Theta_k^{\uparrow})$ in the proof.
We first show that $\wh{\theta}_i=\wb{X}_{(\wh{a}_{j-1}:\wh{a}_j]}$ for all $i\in(\wh{a}_{j-1}:\wh{a}_j]$. By the definition of $\Theta_k^{\uparrow}$, the optimization $\min_{\theta\in\Theta_k^{\uparrow}}\|X-\theta\|^2$ can be equivalently written as
\begin{align*}
&\min_{a_0\leq\cdots\leq a_k}\min_{\mu_1\leq\cdots\leq\mu_k}\sum_{j=1}^k\sum_{i=a_{j-1}+1}^{a_j}(X_i-\mu_j)^2= \\ &\min_{a_0\leq\cdots\leq a_k}\Big\{\sum_{j=1}^k\sum_{i=a_{j-1}+1}^{a_j}(X_i\!-\!\wb{X}_{(a_{j-1}:a_j]})^2+\!\!\!\!\min_{\mu_1\leq\cdots\leq\mu_k}\sum_{j=1}^k(a_j\!-\!a_{j-1})(\mu_j\!-\!\wb{X}_{(a_{j-1}:a_j]})^2\Big\}.
\end{align*}
The optimization problem 
$$\min_{\mu_1\leq\cdots\leq\mu_k}\sum_{j=1}^k(a_j-a_{j-1})(\mu_j-\wb{X}_{(a_{j-1}:a_j]})^2$$
is in the form of weighted isotonic regression. Therefore,
its solution can be represented as
\begin{equation}
\wh{\mu}_j=\min_{v\geq j}\max_{u\leq j}\wb{X}_{(a_{u-1}:a_v]}.\label{eq:w-minmax}
\end{equation}
This fact can be derived using the same proof of the minimax formula of isotonic regression (cf. Proposition 2.4.2 in \cite{silvapulle2011constrained}). Now suppose $(\wt{a}_0,...,\wt{a}_k)$ is a minimizer, then the solution has the form $\wh{\theta}_i=\min_{v\geq j}\max_{u\leq j}\wb{X}_{(\wt{a}_{u-1}:\wt{a}_v]}$ for all $i\in(\wt{a}_{j-1}:\wt{a}_j]$. Note that the values in the $k$ intervals satisfy $\wh{\mu}_1\leq\cdots\leq\wh{\mu}_k$. We can combine any two adjacent interval if $\wh{\mu}_{j-1}=\wh{\mu}_j$. Then, by the formula (\ref{eq:w-minmax}), there exist $\{\wh{a}_j\}$ such that $\wh{\theta}_i=\wb{X}_{(\wh{a}_{j-1}:\wh{a}_j]}$ for all $i\in(\wh{a}_{j-1}:\wh{a}_j]$.

Now we prove the second point. By symmetry, it is sufficient to prove $(\wh{\theta}_{\wh{a}_j}+\wh{\theta}_{\wh{a}_{j+1}})/2<\wb{X}_{(\wh{a}_j:t]}$. Moreover, as $\wh{\theta}_i$ is nondecreasing in $i$, it suffices to consider $\wh{a}_j<t<\wh{a}_{j+1}$. There are three possible cases.

\textit{Case 1.} ${\overline X}_{(t:\wh{a}_{j+1}]} \neq \wh{\theta}_{\wh{a}_{j+1}}$ and ${\overline X}_{(t:\wh{a}_{j+1}]}\geq \wh{\theta}_{\wh{a}_j}$.
By the optimality of $\wh{\theta}(\Theta_k^\uparrow)$, assigning $\wh{\theta}_{\wh{a}_j}$ to $\wh{\theta}_i$ for all 
$i\in (\wh{a}_j:t]$ does not provide a better fit, 
$$
\sum_{i = \wh{a}_j+1}^t (X_i - \wh{\theta}_{\wh{a}_{j+1}})^2
+ \sum_{i = t+1}^{\wh{a}_{j+1}} (X_i - \wh{\theta}_{\wh{a}_{j+1}})^2
\le \sum_{i = \wh{a}_j+1}^t (X_i - \wh{\theta}_{\wh{a}_j})^2 
+ \sum_{i = t+1}^{\wh{a}_{j+1}} (X_i - {\overline X}_{(t:\wh{a}_{j+1}]})^2. 
$$
It follows that 
$$
(t-\wh{a}_j)({\overline X}_{(\wh{a}_j:t]} - \wh{\theta}_{\wh{a}_{j+1}})^2
+ (\wh{a}_{j+1}-t)({\overline X}_{(t:\wh{a}_{j+1}]} - \wh{\theta}_{\wh{a}_{j+1}})^2
\le (t-\wh{a}_j)({\overline X}_{(\wh{a}_j:t]} - \wh{\theta}_{\wh{a}_j})^2. 
$$
This leads to $|{\overline X}_{(\wh{a}_j:t]} - \wh{\theta}_{\wh{a}_{j+1}}| 
< |{\overline X}_{(\wh{a}_j:t]} - \wh{\theta}_{\wh{a}_j}|$, which further implies $\wb{X}_{(\wh{a}_j:t]}>(\wh{\theta}_{\wh{a}_j}+\wh{\theta}_{\wh{a}_{j+1}})/2$. 

\textit{Case 2.} ${\overline X}_{(t:\wh{a}_{j+1}]} = \wh{\theta}_{\wh{a}_{j+1}}$. Since $\wh{\theta}_{\wh{a}_{j+1}}=\wb{X}_{(\wh{a}_j:\wh{a}_{j+1}]}$ is a weighted average of ${\overline X}_{(t:\wh{a}_{j+1}]}$ and ${\overline X}_{(\wh{a}_j:t]}$, we have ${\overline X}_{(\wh{a}_j:t]}=\wh{\theta}_{\wh{a}_{j+1}}>\wh{\theta}_{\wh{a}_{j}}$. Thus, we still have $\wb{X}_{(\wh{a}_j:t]}>(\wh{\theta}_{\wh{a}_j}+\wh{\theta}_{\wh{a}_{j+1}})/2$.

\textit{Case 3.} ${\overline X}_{(t:\wh{a}_{j+1}]}< \wh{\theta}_{\wh{a}_j}$. By the definition of $\{\wh{a}_j\}$, we have $\wh{\theta}_{\wh{a}_{j+1}}>\wh{\theta}_{\wh{a}_j}$. Moreover, since $\wh{\theta}_{\wh{a}_{j+1}}=\wb{X}_{(\wh{a}_j:\wh{a}_{j+1}]}$ is a weighted average of ${\overline X}_{(t:\wh{a}_{j+1}]}$ and ${\overline X}_{(\wh{a}_j:t]}$, we must have ${\overline X}_{(\wh{a}_j:t]}>\wh{\theta}_{\wh{a}_{j+1}}$ and ${\overline X}_{(\wh{a}_j:t]}>\wh{\theta}_{\wh{a}_{j}}$, which also leads to $\wb{X}_{(\wh{a}_j:t]}>(\wh{\theta}_{\wh{a}_j}+\wh{\theta}_{\wh{a}_{j+1}})/2$.

Finally, we have
$$\wh{\theta}_{\wh{a}_j+1}^{(n)}=\min_{b\geq \wh{a}_j+1}\max_{a\leq \wh{a}_j+1}\wb{X}_{[a:b]}\geq \min_{b>\wh{a}_j}\wb{X}_{(\wh{a}_j:b]}>(\wh{\theta}_{\wh{a}_j}+\wh{\theta}_{\wh{a}_{j+1}})/2.$$
By symmetry, we also have $\wh{\theta}_{\wh{a}_j}^{(n)}<(\wh{\theta}_{\wh{a}_j}+\wh{\theta}_{\wh{a}_{j+1}})/2$, and therefore $\wh{\theta}_{\wh{a}_j}^{(n)}<\wh{\theta}_{\wh{a}_j+1}^{(n)}$, meaning that $\wh{a}_j$ is also a change point for $\wh{\theta}^{(n)}$, which immediately implies the last conclusion $\wh{A}_k\subset\wh{A}_n$.
\end{proof}

\subsection{Proofs of upper bounds}\label{sec:pf-upper}

In this section, we state the proofs of Theorems \ref{thm:upper2}  and \ref{thm:adaptive-oracle}.

\begin{proof}[Proof of Theorem \ref{thm:upper2}]
We first introduce notations that are needed in the proof. We shorthand $\wh{\theta}(\Theta_k^{\uparrow})$ by $\wh{\theta}$. The set of knots of $\wh{\theta}$ is denoted by $\wh{A}_k=\{\wh{a}_h\}$. Define the oracle
\begin{equation}
\theta^{(k)} = \argmin_{\theta\in\Theta_k^{\uparrow}}\|\theta-\theta^*\|^2. \label{eq:oracle-k}
\end{equation}
The set of knots of $\theta^{(k)}$ is denoted by $A_k=\{a_j\}$ where we allow overlaps within $a_1,\ldots,a_k$.
For the error vector $Z=X-\theta^*$ and two integers $1\leq a\leq b\leq n$, define random variables
\begin{align}
\label{eq:def+1} \xi_+(a,b,\ell) &\!=\! 2^\ell\max\Bigl\{|\wb{Z}_{(a:t]}|^2\!:\!a+2^{\ell-1}\!\leq\! t\!\leq\! b\wedge(a+2^\ell-1)\Bigr\}, \\
\label{eq:def+2} \delta_+(a,b,\ell) &\!=\! \max_{\{h:a<\wh{a}_h\leq b\}}\ind\Big\{a+2^{\ell-1}\!\leq\! \wh{a}_h\!\leq\!b\wedge(a+2^\ell-1)\Big\}, \\
\label{eq:def-1} \xi_-(a,b,\ell) &\!=\! 2^\ell\max\Bigl\{|\wb{Z}_{(t:b]}|^2:{a}\vee(b\!+\!2\!-\!2^\ell)\!\leq\! t\!\leq\! b\!+\!1\!-\!2^{\ell-1}\Bigr\}, \\
\label{eq:def-2} \delta_-(a,b,\ell) &\!=\! \max_{\{h:a<\wh{a}_{h}\leq b\}}\ind\Big\{{a}\vee(b+2-2^\ell)\!\leq\! \wh{a}_{h}\!\leq\! b+1-2^{\ell-1}\Big\}.
\end{align}
We adopt the convention that maximum over an empty set is zero. The random variables defined above satisfy the following lemma, which will be proved in Section \ref{sec:aux} in the supplement.
\begin{lemma} \label{lem:er12}
There exists a universal constant $C>0$, such that for any integer $f\geq 0$,
\begin{align*}
\sum_{j=1}^{k}\sum_{\{\ell\geq 1:a_{j-1}+2^{\ell-1}\leq a_j\}}\mathbb{E}\delta_+(a_{j-1},a_j,\ell+f)\xi_+(a_{j-1},a_j,\ell) &\leq C\sigma^2k\log\log(16n/k),\\
\sum_{j=1}^{k}\sum_{\{\ell\geq 1:a_{j-1}\leq a_j-2^{\ell-1}\}}\mathbb{E}\delta_-(a_{j-1},a_j,\ell+f)\xi_-(a_{j-1},a_j,\ell) &\leq C\sigma^2k\log\log(16n/k).
\end{align*}
\end{lemma}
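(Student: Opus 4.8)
\textbf{Proof proposal for Lemma \ref{lem:er12}.}

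The plan is to treat the two displayed bounds symmetrically; I will focus on the first one, involving $\delta_+$ and $\xi_+$. The key structural observation is that for a fixed block $(a_{j-1}:a_j]$, the indicator $\delta_+(a_{j-1},a_j,\ell)$ is nonzero for at most $\log_2(a_j-a_{j-1})+1$ values of $\ell$, and the ranges $\{t: a_{j-1}+2^{\ell-1}\le t\le a_{j-1}+2^\ell-1\}$ over different $\ell$ are disjoint dyadic windows that tile the interval to the right of $a_{j-1}$. So the inner sum $\sum_\ell \xi_+(a_{j-1},a_j,\ell)$ is essentially a maximal-type functional of the partial sum process of $Z$ started at $a_{j-1}$, scaled dyadically — exactly the object governed by a non-asymptotic law of the iterated logarithm. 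I would first prove a one-block estimate: for each $j$,
\[
\E\Big[\sum_{\{\ell\ge 1: a_{j-1}+2^{\ell-1}\le a_j\}}\delta_+(a_{j-1},a_j,\ell)\,\xi_+(a_{j-1},a_j,\ell)\Big]\le C\sigma^2\log\log(16(a_j-a_{j-1})+16),
\]
and then sum over $j=1,\dots,k$, using concavity of $x\mapsto\log\log(16x+16)$ together with $\sum_j(a_j-a_{j-1})=n$ (Jensen) to convert $\sum_j\log\log(16(a_j-a_{j-1})+16)$ into $k\log\log(16n/k)$, up to a universal constant. The $a_j$ here are the oracle knots from \eqref{eq:oracle-k}, which are deterministic, so the only randomness is in $Z$ (through $\xi_+$) and in $\wh A_k$ (through $\delta_+$); the bound on the one-block functional must hold uniformly over the choice of $\wh a_h$'s, i.e.\ we may as well drop $\delta_+$ and bound $\E\sum_\ell 2^\ell\max\{|\wb Z_{(a_{j-1}:t]}|^2: a_{j-1}+2^{\ell-1}\le t\le\cdots\}$.

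For the one-block estimate, set $S_m=\sum_{i=a_{j-1}+1}^{a_{j-1}+m}Z_i$, so that $2^\ell|\wb Z_{(a_{j-1}:t]}|^2 = 2^\ell S_{t-a_{j-1}}^2/(t-a_{j-1})^2\le 2\, S_{t-a_{j-1}}^2/(t-a_{j-1})$ when $2^{\ell-1}\le t-a_{j-1}$. Hence the quantity to bound is at most $2\sum_{\ell\ge 1}\max\{S_m^2/m: 2^{\ell-1}\le m\le 2^\ell-1\}$, which telescopes into a weighted maximum of $S_m^2/m$ over dyadic scales. I would control this by a stratification argument: on the event that the running maximum $\max_{m\le 2^\ell}|S_m|/\sqrt{m}$ first exceeds level $\lambda\sqrt{\ell}$ at scale $2^\ell$, Doob's/Lévy's maximal inequality for the martingale $(S_m)$ (or for $\max_{m\le 2^\ell}|S_m|$) together with the moment assumption \eqref{eq:moment} gives a tail that is summable in $\ell$ after the $\log\log$ normalization — this is precisely the content invoked in Remark \ref{remark:R1} and the promised Lemma \ref{lem:simple-lil}. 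Concretely, under the i.i.d.\ assumption one uses $\E Z_1^2\log(e+Z_1^2/\sigma^2)\le C_1\sigma^2$ to get the Marcinkiewicz–Zygmund/Hartman–Wintner LIL rate $\limsup |S_m|/\sqrt{2m\sigma^2\log\log m}\le 1$ in a non-asymptotic, in-expectation form; under the non-identically-distributed assumption one uses the $(2+\epsilon)$-moment bound and a Rosenthal-type inequality at each dyadic scale. Either way the output is $\E\sum_{\ell=1}^{L}\max\{S_m^2/m: 2^{\ell-1}\le m<2^\ell\}\le C\sigma^2\log\log(16\cdot 2^L)$ with $2^L\asymp a_j-a_{j-1}$, which is what is needed, and the $\ind\{k\ge 2\}$-type correction is absorbed because for $a_j-a_{j-1}$ bounded the inner sum has $O(1)$ terms each with expectation $O(\sigma^2)$.

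I expect the main obstacle to be the non-asymptotic LIL-type bound under the \emph{weakest} moment condition — the $L\log L$ condition in the i.i.d.\ case and the bare $(2+\epsilon)$ moment in the heterogeneous case. A clean asymptotic LIL is classical (Hartman–Wintner, Kolmogorov), but here one needs a uniform-in-$n$, uniform-in-block-length bound on the expectation of a dyadic maximal functional, which requires care to keep the $\log\log$ (rather than $\log$) dependence while paying only for finitely many scales. The right tool is a layer-cake decomposition over the scale $\ell$ at which $|S_m|/\sqrt{m\ell}$ crosses a large threshold, controlling each layer by a maximal inequality at that single scale; the moment hypothesis \eqref{eq:moment} is exactly calibrated so that the resulting series in $\ell$ converges. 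Granting Lemma \ref{lem:simple-lil} (stated later in the paper), the remaining work — the dyadic telescoping, dropping $\delta_+$, and the Jensen step to pass from $\sum_j\log\log(\cdot)$ to $k\log\log(16n/k)$ — is routine and I would not belabor it.
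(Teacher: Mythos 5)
There is a genuine gap, and it is at the very first reduction: you cannot ``drop $\delta_+$.'' Since $\xi_+\geq 0$ and $\delta_+\in\{0,1\}$, dropping the indicator is of course a valid upper bound, but it destroys exactly the structure that produces the iterated logarithm. Your claimed one-block estimate, in the form $\mathbb{E}\sum_{\ell=1}^{L}\max\{S_m^2/m: 2^{\ell-1}\leq m<2^\ell\}\leq C\sigma^2\log\log(16\cdot 2^L)$, is false already for i.i.d.\ $N(0,\sigma^2)$ noise: each dyadic scale contributes at least $\mathbb{E}\,S_{2^{\ell-1}}^2/2^{\ell-1}=\sigma^2$, so the left-hand side is at least $\sigma^2 L\asymp\sigma^2\log_2$(block length). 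A sum over dyadic scales of per-scale maxima is not the single running maximum governed by the LIL (that object is what Lemma \ref{lem:simple-lil} controls); summing per-scale maxima without the indicators gives $\sigma^2\log n_j$ per block and hence only the suboptimal $\sigma^2 k\log(en/k)$ overall.

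The indicator $\delta_+(a_{j-1},a_j,\ell)$ is the whole point of the lemma: it records whether an \emph{estimated} knot $\wh a_h$ falls in the dyadic window at scale $\ell$ inside the oracle block $j$, and since $\wh\theta(\Theta_k^\uparrow)$ has at most $k$ knots, at most $\wt k=\min\{k,m\}$ of the $m=\sum_j\lceil 1\vee\log_2 n_j\rceil$ indicators are nonzero. The paper exploits this sparsity: after a two-layer truncation of the $Z_i$'s (needed because only $2+\epsilon$ moments, resp.\ $L\log L$, are assumed) and Doob/L\'evy--Ottaviani plus Bernstein to show each truncated per-scale maximum has a uniformly bounded exponential moment, the random sum is dominated by the maximum of $\sum\delta_{j\ell}\eta_{j\ell}$ over all $0$--$1$ selections with $\sum\delta_{j\ell}\leq\wt k$, and a Chernoff-plus-union bound over the $\binom{m}{\wt k}$ selections costs $\wt k\log(em/\wt k)\lesssim k\log\log(16n/k)$ — this model-selection entropy, not a per-block LIL, is where the iterated logarithm comes from (a separate geometric-weighting argument handles the anchor terms $2^\ell|\wb Z_{(a_{j-1}:a_{j-1}+2^{\ell-1}]}|^2$). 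Your uniformity heuristic (``the bound must hold uniformly over the $\wh a_h$'s'') corresponds to the selection with \emph{all} indicators active, which is precisely the configuration ruled out by the $k$-piece constraint; without using that constraint the lemma's rate is unattainable. The Jensen step you propose at the end is fine but moot, since the per-block input it needs does not hold.
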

We also need the following lemma to facilitate the proof. Its proof will also be given in Section \ref{sec:aux} in the supplement.
\begin{lemma}\label{lem:simple-lil}
There exists a universal constant $C>0$, such that
\begin{eqnarray*}
\sum_{j=1}^k\mathbb{E}\max_{a_{j-1}<a\leq a_j}(a-a_{j-1})\wb{Z}^2_{(a_{j-1}:a]} &\leq& C\sigma^2k\log\log(16n/k), \\
\sum_{j=1}^k\mathbb{E}\max_{a_{j-1}\leq a\leq a_j}(a_j-a)\wb{Z}_{(a:a_j]}^2 &\leq& C\sigma^2k\log\log(16n/k).
\end{eqnarray*}
\end{lemma}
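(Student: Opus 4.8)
The plan is to reduce both displays to a single maximal-inequality estimate over dyadic blocks, then combine them over the $k$ oracle pieces. First I would fix one index $j$ and one direction, say the first display, and restrict attention to the block $(a_{j-1}:a_j]$ of length $m_j:=a_j-a_{j-1}$. Writing $S_a=\sum_{i=a_{j-1}+1}^{a}Z_i$, I want to bound $\E\max_{1\le r\le m_j} S_{a_{j-1}+r}^2/r$. I would break the range of $r$ into dyadic scales $2^{\ell-1}\le r\le 2^{\ell}$ for $\ell=1,\dots,\ceil{\log_2 m_j}$ (plus the trivial $r=1$ term), and on each scale use $r\ge 2^{\ell-1}$ to pull out the denominator: the scale-$\ell$ contribution is at most $2^{2-\ell}\max_{r\le 2^\ell}S_{a_{j-1}+r}^2$. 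Then Doob's $L^2$ maximal inequality (or Kolmogorov's) gives $\E\max_{r\le 2^\ell}S_{a_{j-1}+r}^2\lesssim \sigma^2 2^\ell$ under the moment condition \eqref{eq:moment}, so scale $\ell$ contributes $O(\sigma^2)$, and summing over the $O(\log m_j)$ scales would only give $O(\sigma^2\log m_j)$ — which is too weak. The fix, and the real point, is to retain the supremum more carefully: one shows $\E\,\psi\bigl(\max_{r\le 2^\ell}S^2_{a_{j-1}+r}/(\sigma^2 2^\ell)\bigr)$ is bounded for a suitable convex $\psi$ growing like $t\log(e+t)$ (available from the i.i.d.\ moment hypothesis or, in the non-identical case, from the $(2+\epsilon)$-moment bound and Rosenthal/Doob), so that $\max_\ell \bigl[\max_{r\le 2^\ell}S^2_{a_{j-1}+r}/(\sigma^2 2^\ell)\bigr]$ has expectation $O(\log(\#\text{scales}))=O(\log\log m_j)$; this is exactly the non-asymptotic law of iterated logarithm referenced in Remark \ref{remark:R1}.

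Concretely, I expect to invoke (or reprove) a non-asymptotic LIL of the following shape: for independent mean-zero $Z_i$ satisfying \eqref{eq:moment}, $\E\max_{1\le r\le m}\bigl|\wb Z_{(0:r]}\bigr|^2/\rho_m(r)\le C\sigma^2$ where $\rho_m(r)=(1/r)\log\log(16m/r)$ or a comparable profile; evaluating this at the block $(a_{j-1}:a_j]$ and using $\log\log(16 m_j/r)\le \log\log(16m_j)$ yields $\E\max_{a_{j-1}<a\le a_j}(a-a_{j-1})\wb Z^2_{(a_{j-1}:a]}\le C\sigma^2\log\log(16m_j)$. The second display is the mirror image, obtained by reversing the order of the $Z_i$ inside the block and applying the same bound to the time-reversed partial sums; since the $Z_i$ are independent this reversal is legitimate.

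Finally I would sum over $j=1,\dots,k$. This needs the concavity-type inequality $\sum_{j=1}^k \log\log(16m_j)\le C\,k\log\log(16n/k)$ subject to $\sum_j m_j=n$ and $m_j\ge 1$: since $t\mapsto \log\log(16t)$ is concave on $[1,\infty)$, Jensen gives $\frac1k\sum_j\log\log(16m_j)\le \log\log\bigl(16\,\frac1k\sum_j m_j\bigr)=\log\log(16n/k)$, which is exactly what is needed. (One must be slightly careful when some $m_j$ are $0$ because of the overlaps allowed in $A_k$, but such terms contribute nothing to the maxima, so they can simply be dropped before applying Jensen.) Combining the per-block bound with this summation gives both inequalities with a single universal constant $C$.

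The main obstacle is the non-asymptotic LIL estimate itself — getting the $\log\log$ rather than $\log$ out of the dyadic decomposition of $\max_r S_r^2/r$ under only the weak moment condition \eqref{eq:moment}. Everything else (Doob's inequality, time-reversal, Jensen over the pieces) is routine; the delicate part is the Lévy/Doob martingale maximal argument that controls the maximum of the normalized dyadic-scale maxima, and handling the non-identically-distributed case where one only has a $(2+\epsilon)$-th moment rather than a uniform $L^2\log L^2$ bound.
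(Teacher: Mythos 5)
Your overall architecture is essentially the paper's: dyadic scales inside each oracle block, a maximal inequality at each scale, control of the maximum over the roughly $\log_2 m_j$ scales of a block, and then summation over the $k$ blocks using concavity of $t\mapsto\log\log(16t)$. The paper routes this through Lemma \ref{lem:er12} (one selected scale per block, then a union bound over at most $\wt{k}$ selections out of $m=\sum_j\ceil{1\vee\log_2 n_j}$ scales, which produces $\wt{k}\log(em/\wt{k})\lesssim k\log\log(16n/k)$), while you propose a per-block non-asymptotic LIL plus Jensen; these are equivalent in principle, and your time-reversal reduction of the second display and your Jensen step are fine.

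The genuine gap is in the step you yourself call the crux. You claim that a bound $\E\,\psi\bigl(\max_{r\le 2^{\ell}}S_r^2/(\sigma^2 2^{\ell})\bigr)\le C$ for a convex $\psi(t)\asymp t\log(e+t)$ forces $\E\max_{\ell\le L}\bigl[2^{-\ell}\max_{r\le 2^{\ell}}S_r^2\bigr]\lesssim\sigma^2\log L$. It does not: from $\psi(\max_\ell Y_\ell)\le\sum_\ell\psi(Y_\ell)$ and convexity one only gets $\E\max_{\ell\le L}Y_\ell\le\psi^{-1}(CL)\asymp L/\log L$, and this order cannot be improved under such Orlicz control alone (e.g.\ i.i.d.\ $Y_\ell$ with $\P(Y>t)\asymp t^{-1}(\log t)^{-3}$ satisfy $\E\,\psi(Y)<\infty$ yet $\E\max_{\ell\le L}Y_\ell\gtrsim L/(\log L)^{3}$). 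With $L\asymp\log m_j$ this gives essentially the sub-optimal $\log m_j$ rate, not $\log\log(16m_j)$; and under the non-identically-distributed half of (\ref{eq:moment}) Rosenthal/Doob give only uniform $L^{1+\epsilon/2}$ control of the normalized per-scale maxima, whence $\E\max_{\ell\le L}Y_\ell\lesssim L^{2/(2+\epsilon)}$, again polynomial in $L$. To extract a logarithm of the number of scales you need exponential-type tails for the per-scale maxima, and under (\ref{eq:moment}) these are not available directly: the paper first truncates each $Z_i$ at the scale-dependent level $\sigma^2 2^{\ell}/\ell$, applies L\'evy--Ottaviani and Bernstein to the truncated, recentered partial sums to obtain a mixed sub-Gaussian/sub-exponential tail, truncates the resulting per-scale maxima a second time at level $\sigma^2\ell$ so that the surviving variables have a uniformly bounded moment generating function, and controls the truncated-away contributions separately by summable terms of order $\sigma^2(\ell/2^{\ell})^{\epsilon/2}$ and $\sigma^2 e^{-C\ell}$ (the $L\log L$ condition in the i.i.d.\ case enters exactly to make the tail contributions summable). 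Without this two-layer truncation your key per-block estimate $\E\max_{1\le r\le m}S_r^2/r\lesssim\sigma^2\log\log(16m)$ is not established, so the proposal as written does not deliver the iterated logarithm.
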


The proof of Theorem \ref{thm:upper2} starts with the basic inequality $\|X-\wh{\theta}\|^2\leq \|X-\theta^{(k)}\|^2$, a direct consequence of the definition of $\wh{\theta}$. Since
\begin{eqnarray}
\label{eq:qexpand-1} \|X-\wh{\theta}\|^2 &=& \|X-\theta^*\|^2 + \|\theta^*-\wh{\theta}\|^2 + 2\iprod{X-\theta^*}{\theta^*-\wh{\theta}}, \\
\label{eq:qexpand-2} \|X-\theta^{(k)}\|^2 &=& \|X-\theta^*\|^2 + \|\theta^*-\theta^{(k)}\|^2 + 2\iprod{X-\theta^*}{\theta^*-\theta^{(k)}},
\end{eqnarray}
we have
\begin{equation}
\|\wh{\theta}-\theta^*\|^2\leq \|\theta^{(k)}-\theta^*\|^2 + 2\iprod{X-\theta^*}{\wh{\theta}-\theta^{(k)}}. \label{eq:basic-k-piece}
\end{equation}
For each $j$, define $h_j=\max\{h:\wh{a}_h\leq a_j\}$. It is easy to see that $\wh{a}_{h_j}\leq a_{j-1}$ if and only if $\wh{\theta}$ is a constant in the interval $(a_{j-1}:a_j]$.
Then, the inner product term above is
\begin{eqnarray}
\nonumber && 2\iprod{X-\theta^*}{\wh{\theta}-\theta^{(k)}} \\
\nonumber &=& 2\sum_{j=1}^k\ind\{\wh{a}_{h_j}\leq a_{j-1}\}\sum_{i\in(a_{j-1}:a_j]}(X_i-\theta_i^*)(\wh{\theta}_i-\theta_i^{(k)}) \\
\nonumber && + 2\sum_{j=1}^k\ind\{\wh{a}_{h_j}> a_{j-1}\}\sum_{i\in(a_{j-1}:a_j]}(X_i-\theta_i^*)(\wh{\theta}_i-\theta_i^{(k)}) \\
\nonumber &=& 2\sum_{\{j\in[k]:\wh{a}_{h_j}\leq a_{j-1}\}}(a_j-a_{j-1})\wb{Z}_{(a_{j-1}:a_j]}(\wh{\theta}_{a_j}-\theta_{a_j}^{(k)}) \\
\nonumber && + 2\sum_{\{j\in[k]:\wh{a}_{h_j}> a_{j-1}\}}(a_j-\wh{a}_{h_j})\wb{Z}_{(\wh{a}_{h_j}:a_j]}(\wh{\theta}_{a_j}-\theta_{a_j}^{(k)}) \\
\nonumber && + 2\sum_{\{j\in[k]:\wh{a}_{h_j}> a_{j-1}\}}(\wh{a}_{h_{j-1}+1}-a_{j-1})\wb{Z}_{(a_{j-1}:\wh{a}_{h_{j-1}+1}]}(\wh{\theta}_{\wh{a}_{h_{j-1}+1}}-\theta_{\wh{a}_{h_{j-1}+1}}^{(k)}) \\
\label{eq:the-hardest-4th-term} && + 2\sum_{\{j\in[k]:\wh{a}_{h_j}> a_{j-1}\}}\sum_{\{h: (\wh{a}_{h-1}:\wh{a}_h]\subset(a_{j-1}:a_j]\}}(\wh{a}_h-\wh{a}_{h-1})\wb{Z}_{(\wh{a}_{h-1}:\wh{a}_h]}(\wh{\theta}_{\wh{a}_h}-\theta_{\wh{a}_h}^{(k)}).
\end{eqnarray}
The summation over an empty set is understood as zero.
The inner product $2\iprod{X-\theta^*}{\wh{\theta}-\theta^{(k)}}$ is bounded by four terms. For the first three terms, we can use Cauchy-Schwarz and, for any $\eta\in(0,1)$, get the bound
\begin{eqnarray}
\label{eq:bound-first-three-terms} && 3\eta\|\wh{\theta}-\theta^{(k)}\|^2 + \eta^{-1}\sum_{j=1}^k(a_j-a_{j-1})\wb{Z}_{(a_{j-1}:a_j]}^2 \\
\nonumber &&  + \eta^{-1}\sum_{j=1}^k(a_j-\wh{a}_{h_j})\wb{Z}_{(\wh{a}_{h_j}:a_j]}^2  + \eta^{-1}\sum_{j=1}^k(\wh{a}_{h_{j-1}+1}-a_{j-1})\wb{Z}_{(a_{j-1}:\wh{a}_{h_{j-1}+1}]}^2.
\end{eqnarray}

Bounding the fourth term (\ref{eq:the-hardest-4th-term}) is involved. We need some extra notations. For each $h$ such that $(\wh{a}_{h-1}:\wh{a}_h]\subset(a_{j-1}:a_j]$, define
$$a_{h-1}'=\floor{\frac{a_{j-1}+\wh{a}_h}{2}},\quad b_{h-1}'=\wh{a}_{h-1}\wedge a_{h-1}',$$
$$a_h''=\ceil{\frac{\wh{a}_{h-1}+a_j}{2}},\quad b_h''=\wh{a}_h\vee a_h''.$$
Given any integers $1\leq a\leq b\leq n$, define the random variables
\begin{eqnarray*}
\wb{Z}_{(a:b]}' &=& \max_{b'\in(a:b]}\frac{b'-a}{b-a}\left|\wb{Z}_{(a:b']}\right|, \\
\wb{Z}_{(a:b]}'' &=& \max_{a'\in[a:b)}\frac{b-a'}{b-a}\left|\wb{Z}_{(a':b]}\right|.
\end{eqnarray*}
By Lemma \ref{lem:cun-hui}, we have $\wh{\theta}_{\wh{a}_h}\leq \wb{X}_{(\wh{a}_h:b_h'']}$. Since $\wb{X}_{(\wh{a}_{h-1}:b_h'']}$ is a weighted average of $\wh{\theta}_{\wh{a}_h}=\wb{X}_{(\wh{a}_{h-1}:\wh{a}_h]}$ and $\wb{X}_{(\wh{a}_h:b_h'']}$, we get $\wh{\theta}_{\wh{a}_h}\leq \wb{X}_{(\wh{a}_{h-1}:b_h'']}$. With this bound, we have
\begin{eqnarray*}
\wh{\theta}_{\wh{a}_h}-\theta_{\wh{a}_h}^{(k)} &\leq& \wb{X}_{(\wh{a}_{h-1}:b_h'']} - \wb{\theta}^*_{(\wh{a}_{h-1}:b_h'']}  + \wb{\theta}^*_{(\wh{a}_{h-1}:b_h'']} - \theta_{\wh{a}_h}^{(k)} \\
&=& \wb{Z}_{(\wh{a}_{h-1}:b_h'']}  + \wb{\theta}^*_{(\wh{a}_{h-1}:b_h'']} - \theta_{\wh{a}_h}^{(k)} \\
&=& \frac{a_j-\wh{a}_{h-1}}{b_h''-\wh{a}_{h-1}}\wb{Z}_{(\wh{a}_{h-1}:a_j]} - \frac{a_j-b_h''}{b_h''-\wh{a}_{h-1}}\wb{Z}_{(b_h'':a_j]} + \wb{\theta}^*_{(\wh{a}_{h-1}:b_h'']} - \theta_{\wh{a}_h}^{(k)} \\
&\leq& 4\wb{Z}_{(\wh{a}_{h-1}:a_j]}'' + |\wb{\theta}^*_{(\wh{a}_{h-1}:b_h'']} - \theta_{\wh{a}_h}^{(k)}|.
\end{eqnarray*}
A symmetric argument also gives
$$\wh{\theta}_{\wh{a}_h}-\theta_{\wh{a}_h}^{(k)} \geq -4\wb{Z}'_{(a_{j-1}:\wh{a}_h]}-|\wb{\theta}^*_{(b_{h-1}':\wh{a}_h]} - \theta_{\wh{a}_h}^{(k)}|.$$
Therefore, we have the inequality
\begin{equation}
|\wh{\theta}_{\wh{a}_h}-\theta_{\wh{a}_h}^{(k)}| \leq 4(\wb{Z}'_{(a_{j-1}:\wh{a}_h]}\vee\wb{Z}_{(\wh{a}_{h-1}:a_j]}'') + |\wb{\theta}^*_{(\wh{a}_{h-1}:b_h'']} - \theta_{\wh{a}_h}^{(k)}|\vee|\wb{\theta}^*_{(b_{h-1}':\wh{a}_h]} - \theta_{\wh{a}_h}^{(k)}|. \label{eq:cunhui's-magic}
\end{equation}
Since (\ref{eq:the-hardest-4th-term}) is a sum of $k$ terms, we can bound each of the term separately. 

For each $j\in[k]$, {recalling $\wb{Z}_{(\wh{a}_{h-1}:\wh{a}_h]}=\wh{\theta}_{\wh{a}_h}-\wb{\theta}^*_{(\wh{a}_{h-1}:\wh{a}_h]}$,}
\begin{eqnarray}
\nonumber && \sum_{\{h: (\wh{a}_{h-1}:\wh{a}_h]\subset(a_{j-1}:a_j]\}}(\wh{a}_h-\wh{a}_{h-1})\wb{Z}_{(\wh{a}_{h-1}:\wh{a}_h]}(\wh{\theta}_{\wh{a}_h}-\theta_{\wh{a}_h}^{(k)}) \\
\nonumber &{=}& \sum_{\{h: (\wh{a}_{h-1}:\wh{a}_h]\subset(a_{j-1}:a_j]\}}(\wh{a}_h-\wh{a}_{h-1})(\wh{\theta}_{\wh{a}_h}-\theta_{\wh{a}_h}^{(k)})^2 \\
\nonumber && + \sum_{\{h: (\wh{a}_{h-1}:\wh{a}_h]\subset(a_{j-1}:a_j]\}}(\wh{a}_h-\wh{a}_{h-1})(\theta_{\wh{a}_h}^{(k)}-\wb{\theta}^*_{(\wh{a}_{h-1}:\wh{a}_h]})(\wh{\theta}_{\wh{a}_h}-\theta_{\wh{a}_h}^{(k)}) \\
\nonumber &\leq& 32\sum_{\{h: (\wh{a}_{h-1}:\wh{a}_h]\subset(a_{j-1}:a_j]\}}(\wh{a}_h-\wh{a}_{h-1})|\wb{Z}'_{(a_{j-1}:\wh{a}_h]}|^2 \\
\nonumber && + 32\sum_{\{h: (\wh{a}_{h-1}:\wh{a}_h]\subset(a_{j-1}:a_j]\}}(\wh{a}_h-\wh{a}_{h-1})|\wb{Z}_{(\wh{a}_{h-1}:a_j]}''|^2 \\
\label{eq:hardest-one} && + 2\sum_{\{h: (\wh{a}_{h-1}:\wh{a}_h]\subset(a_{j-1}:a_j]\}}(\wh{a}_h-\wh{a}_{h-1})|\wb{\theta}^*_{(\wh{a}_{h-1}:b_h'']} - \theta_{\wh{a}_h}^{(k)}|^2 \\
\label{eq:hardest-two} && + 2\sum_{\{h: (\wh{a}_{h-1}:\wh{a}_h]\subset(a_{j-1}:a_j]\}}(\wh{a}_h-\wh{a}_{h-1})|\wb{\theta}^*_{(b_{h-1}':\wh{a}_h]} - \theta_{\wh{a}_h}^{(k)}|^2 \\
\nonumber && + \frac{\eta}{2}\|\wh{\theta}-\theta^{(k)}\|^2_{(a_{j-1}:a_j]} + \frac{1}{2\eta}\|\theta^{(k)}-\theta^*\|^2_{(a_{j-1}:a_j]}.
\end{eqnarray}
Among the terms in the above bound, we need to further analyze (\ref{eq:hardest-one}) and (\ref{eq:hardest-two}). We have
\begin{eqnarray*}
&& \sum_{\{h: (\wh{a}_{h-1}:\wh{a}_h]\subset(a_{j-1}:a_j]\}}(\wh{a}_h-\wh{a}_{h-1})|\wb{\theta}^*_{(\wh{a}_{h-1}:b_h'']} - \theta_{\wh{a}_h}^{(k)}|^2 \\
&\leq& \sum_{\{h: (\wh{a}_{h-1}:\wh{a}_h]\subset(a_{j-1}:a_j]\}}(\wh{a}_h-\wh{a}_{h-1})|\wb{\theta}^*_{(\wh{a}_{h-1}:\wh{a}_h]} - \theta_{\wh{a}_h}^{(k)}|^2 \\
&& + \sum_{\{h: (\wh{a}_{h-1}:\wh{a}_h]\subset(a_{j-1}:a_j]\}}(\wh{a}_h-\wh{a}_{h-1})|\wb{\theta}^*_{(\wh{a}_{h-1}:a_h'']} - \theta_{\wh{a}_h}^{(k)}|^2 \\
&\leq& \sum_{\{h: (\wh{a}_{h-1}:\wh{a}_h]\subset(a_{j-1}:a_j]\}}\|\theta^{(k)}-\theta^*\|^2_{(\wh{a}_{h-1}:\wh{a}_h]} \\
&& + \sum_{\{h: (\wh{a}_{h-1}:\wh{a}_h]\subset(a_{j-1}:a_j]\}}\frac{\wh{a}_h-\wh{a}_{h-1}}{a_h''-\wh{a}_{h-1}}\sum_{i\in(\wh{a}_{h-1}:a_h'']}(\theta^*_i-\theta^{(k)}_i)^2 \\
&\leq& \|\theta^{(k)}-\theta^*\|^2_{(a_{j-1}:a_j]} \\
&& + \sum_{i\in(a_{j-1}:a_j]}(\theta^*_i-\theta^{(k)}_i)^2\sum_{\{h: (\wh{a}_{h-1}:\wh{a}_h]\subset(a_{j-1}:a_j]\}}\frac{\wh{a}_h-\wh{a}_{h-1}}{a_h''-\wh{a}_{h-1}}\ind\left\{\wh{a}_{h-1}<i\leq a_h''\right\},
\end{eqnarray*}
where
\begin{eqnarray}
\nonumber && \sum_{\{h: (\wh{a}_{h-1}:\wh{a}_h]\subset(a_{j-1}:a_j]\}}\frac{\wh{a}_h-\wh{a}_{h-1}}{a_h''-\wh{a}_{h-1}}\ind\left\{\wh{a}_{h-1}<i\leq a_h''\right\} \\
\nonumber &\leq& 2\sum_{\{h: (\wh{a}_{h-1}:\wh{a}_h]\subset(a_{j-1}:a_j]\}}\frac{\wh{a}_h-\wh{a}_{h-1}}{a_j-\wh{a}_{h-1}}\ind\left\{\wh{a}_{h-1}<i\leq a_h''\right\} \\
\nonumber &\leq& 2\sum_{\{h: (\wh{a}_{h-1}:\wh{a}_h]\subset(a_{j-1}:a_j]\}}\frac{\wh{a}_h-\wh{a}_{h-1}}{a_j-i+1}\ind\left\{\wh{a}_{h-1}<i\leq a_h''\right\} \\
\label{eq:crazy-magic} &\leq& 2\sum_{\{h: (\wh{a}_{h-1}:\wh{a}_h]\subset(a_{j-1}:a_j]\}}\frac{\wh{a}_h-\wh{a}_{h-1}}{a_j-i+1}\ind\left\{a_j-\wh{a}_{h-1}\leq 2(a_j-i+1)\right\} \\
\nonumber &\leq& 2\max\left\{\frac{a_j-\wh{a}_t}{a_j-i+1}: a_j-\wh{a}_t\leq 2(a_j-i+1)\right\} \\
\nonumber &\leq& 4.
\end{eqnarray}
The inequality (\ref{eq:crazy-magic}) above is due to the fact that $i\leq a_h''\leq \frac{\wh{a}_{h-1}+a_j+1}{2}$ implies
$$2(a_j-i+1)\geq 2a_j+2-(\wh{a}_{h-1}+a_j+1)=a_j-\wh{a}_{h-1}+1.$$
Therefore, we obtain
$$ \sum_{\{h: (\wh{a}_{h-1}:\wh{a}_h]\subset(a_{j-1}:a_j]\}}(\wh{a}_h-\wh{a}_{h-1})|\wb{\theta}^*_{(\wh{a}_{h-1}:b_h'']} - \theta_{\wh{a}_h}^{(k)}|^2\leq 5\|\theta^{(k)}-\theta^*\|^2_{(a_{j-1}:a_j]},$$
which leads to a bound for (\ref{eq:hardest-one}). A symmetric argument gives
$$\sum_{\{h: (\wh{a}_{h-1}:\wh{a}_h]\subset(a_{j-1}:a_j]\}}(\wh{a}_h-\wh{a}_{h-1})|\wb{\theta}^*_{(b_{h-1}':\wh{a}_h]} - \theta_{\wh{a}_h}^{(k)}|^2\leq 5\|\theta^{(k)}-\theta^*\|^2_{(a_{j-1}:a_j]},$$
which leads to a bound for (\ref{eq:hardest-two}). Summing over $j\in[k]$, a bound for (\ref{eq:the-hardest-4th-term}) is given by
\begin{eqnarray*}
&& 64\sum_{j=1}^k\sum_{\{h: (\wh{a}_{h-1}:\wh{a}_h]\subset(a_{j-1}:a_j]\}}(\wh{a}_h-\wh{a}_{h-1})|\wb{Z}'_{(a_{j-1}:\wh{a}_h]}|^2 \\
&& + 64\sum_{j=1}^k\sum_{\{h: (\wh{a}_{h-1}:\wh{a}_h]\subset(a_{j-1}:a_j]\}}(\wh{a}_h-\wh{a}_{h-1})|\wb{Z}_{(\wh{a}_{h-1}:a_j]}''|^2 \\
&& + (40+\eta^{-1})\|\theta^{(k)}-\theta^*\|^2 + \eta\|\wh{\theta}-\theta^{(k)}\|^2.
\end{eqnarray*}
We can plug the above bound and (\ref{eq:bound-first-three-terms}) into (\ref{eq:basic-k-piece}), and we get
\begin{eqnarray*}
\|\wh{\theta}-\theta^*\|^2 &\leq& (41+\eta^{-1})\|\theta^{(k)}-\theta^*\|^2 + 4\eta\|\wh{\theta}-\theta^{(k)}\|^2 \\
&& + \eta^{-1}\sum_{j=1}^k(a_j-a_{j-1})\wb{Z}_{(a_{j-1}:a_j]}^2 + \eta^{-1}\sum_{j=1}^k(a_j-\wh{a}_{h_j})\wb{Z}_{(\wh{a}_{h_j}:a_j]}^2 \\
&&  + \eta^{-1}\sum_{j=1}^k(\wh{a}_{h_{j-1}+1}-a_{j-1})\wb{Z}_{(a_{j-1}:\wh{a}_{h_{j-1}+1}]}^2 \\
&& + 64\sum_{j=1}^k\sum_{\{h: (\wh{a}_{h-1}:\wh{a}_h]\subset(a_{j-1}:a_j]\}}(\wh{a}_h-\wh{a}_{h-1})|\wb{Z}'_{(a_{j-1}:\wh{a}_h]}|^2 \\
&& + 64\sum_{j=1}^k\sum_{\{h: (\wh{a}_{h-1}:\wh{a}_h]\subset(a_{j-1}:a_j]\}}(\wh{a}_h-\wh{a}_{h-1})|\wb{Z}_{(\wh{a}_{h-1}:a_j]}''|^2.
\end{eqnarray*}
Use the inequality $\|\wh{\theta}-\theta^{(k)}\|^2\leq 2\|\wh{\theta}-\theta^*\|^2 + 2\|\theta^{(k)}-\theta^*\|^2$, set $\eta=1/16$, and some rearrangement of the above bound gives
\begin{eqnarray*}
\|\wh{\theta}-\theta^*\|^2 &\leq& C\|\theta^{(k)}-\theta^*\|^2 + C\sum_{j=1}^k(a_j-a_{j-1})\wb{Z}_{(a_{j-1}:a_j]}^2 \\
&& + C\sum_{j=1}^k(\wh{a}_{h_{j-1}+1}-a_{j-1})\wb{Z}_{(a_{j-1}:\wh{a}_{h_{j-1}+1}]}^2 + C\sum_{j=1}^k(a_j-\wh{a}_{h_j})\wb{Z}_{(\wh{a}_{h_j}:a_j]}^2 \\
&& + C\sum_{j=1}^k\sum_{\{h: (\wh{a}_{h-1}:\wh{a}_h]\subset(a_{j-1}:a_j]\}}(\wh{a}_h-\wh{a}_{h-1})|\wb{Z}'_{(a_{j-1}:\wh{a}_h]}|^2 \\
&& + C\sum_{j=1}^k\sum_{\{h: (\wh{a}_{h-1}:\wh{a}_h]\subset(a_{j-1}:a_j]\}}(\wh{a}_h-\wh{a}_{h-1})|\wb{Z}_{(\wh{a}_{h-1}:a_j]}''|^2,
\end{eqnarray*}
where $C>0$ is some universal constant. Note that
$$\mathbb{E}\sum_{j=1}^k(a_j-a_{j-1})\wb{Z}_{(a_{j-1}:a_j]}^2=k\sigma^2\lesssim \sigma^2k\log\log(16n/k).$$
By Lemma \ref{lem:er12}, we have
\begin{align*}
& \mathbb{E}\sum_{j=1}^k\sum_{\{h: (\wh{a}_{h-1}:\wh{a}_h]\subset(a_{j-1}:a_j]\}}(\wh{a}_h-\wh{a}_{h-1})|\wb{Z}'_{(a_{j-1}:\wh{a}_h]}|^2 \\
=& \mathbb{E}\sum_{j=1}^k\sum_{\{h: (\wh{a}_{h-1}:\wh{a}_h]\subset(a_{j-1}:a_j]\}} (\wh{a}_h-\wh{a}_{h-1})\max_{b\in(a_{j-1}:\wh{a}_h]}\frac{(b-a_{j-1})^2}{(\wh{a}_h-a_{j-1})^2}|\wb{Z}_{(a_{j-1}:b]}|^2 \\
\leq& \mathbb{E}\sum_{j=1}^k\sum_{\{h: (\wh{a}_{h-1}:\wh{a}_h]\subset(a_{j-1}:a_j]\}}\sum_{\{\ell\geq 1:a_{j-1}+2^{\ell-1}\leq a_j\}}\ind\{a_{j-1}+2^{\ell-1}\leq\wh{a}_h<a_{j-1}+2^{\ell}\}\\
 & \quad\quad (\wh{a}_h-\wh{a}_{h-1})\max_{b\in(a_{j-1}:\wh{a}_h]}\frac{(b-a_{j-1})^2}{(\wh{a}_h-a_{j-1})^2}|\wb{Z}_{(a_{j-1}:b]}|^2 \\
\leq& \mathbb{E}\sum_{j=1}^k\sum_{\{h: (\wh{a}_{h-1}:\wh{a}_h]\subset(a_{j-1}:a_j]\}}\sum_{\{\ell\geq 1:a_{j-1}+2^{\ell-1}\leq a_j\}}\ind\{a_{j-1}+2^{\ell-1}\leq\wh{a}_h<a_{j-1}+2^{\ell}\}\\
& \quad\quad (\wh{a}_h-\wh{a}_{h-1})\max_{b\in(a_{j-1}:a_j\wedge(a_{j-1}+2^{\ell}-1)]}\frac{(b-a_{j-1})^2}{(2^{\ell-1})^2}|\wb{Z}_{(a_{j-1}:b]}|^2 \\
\leq& \mathbb{E}\sum_{j=1}^k\sum_{\{\ell\geq 1:a_{j-1}+2^{\ell-1}\leq a_j\}}\delta_+(a_{j-1},a_j,\ell)2^{\ell}\max_{b\in(a_{j-1}:a_j\wedge(a_{j-1}+2^{\ell}-1)]}\frac{(b-a_{j-1})^2}{(2^{\ell-1})^2}|\wb{Z}_{(a_{j-1}:b]}|^2 \\\nonumber \leq& 4\mathbb{E}\sum_{j=1}^k\sum_{\{\ell\geq 1:a_{j-1}+2^{\ell-1}\leq a_j\}}\delta_+(a_{j-1},a_j,\ell)2^{-\ell}\max_{b\in(a_{j-1}:a_j\wedge(a_{j-1}+2^{\ell}-1)]}(b-a_{j-1})^2|\wb{Z}_{(a_{j-1}:b]}|^2 \\
\leq& 4\mathbb{E}\sum_{j=1}^k\sum_{\{\ell\geq 1:a_{j-1}+2^{\ell-1}\leq a_j\}}\delta_+(a_{j-1},a_j,\ell)2^{-\ell}\sum_{f=1}^{\ell}2^{f}\xi_+(a_{j-1},a_j,f) \\
\nonumber \leq& 4\mathbb{E}\sum_{j=1}^k\sum_{f\geq 0}2^{-f}\sum_{\{\ell\geq 1:a_{j-1}+2^{\ell-1}\leq a_j\}}\delta_+(a_{j-1},a_j,\ell+f)\xi_+(a_{j-1},a_j,\ell),
\end{align*}
which leads to the conclusion
\begin{align}
  \notag &\mathbb{E}\sum_{j=1}^k\sum_{\{h: (\wh{a}_{h-1}:\wh{a}_h]\subset(a_{j-1}:a_j]\}}(\wh{a}_h-\wh{a}_{h-1})|\wb{Z}'_{(a_{j-1}:\wh{a}_h]}|^2\\
\label{eq:radiohead}=& 4\sum_{f\geq 0}2^{-f}\sum_{j=1}^k\sum_{\{\ell\geq 1:a_{j-1}+2^{\ell-1}\leq a_j\}}\mathbb{E}\delta_+(a_{j-1},a_j,\ell+f)\xi_+(a_{j-1},a_j,\ell) \\
\nonumber \lesssim& \sigma^2k\log\log(16n/k).
\end{align}
Similarly, we also have
$$\mathbb{E}\sum_{j=1}^k\sum_{\{h: (\wh{a}_{h-1}:\wh{a}_h]\subset(a_{j-1}:a_j]\}}(\wh{a}_h-\wh{a}_{h-1})|\wb{Z}_{(\wh{a}_{h-1}:a_j]}''|^2\lesssim \sigma^2k\log\log(16n/k).$$
Finally, by Lemma \ref{lem:simple-lil}, we have
$$\mathbb{E}\sum_{j=1}^k(\wh{a}_{h_{j-1}+1}-a_{j-1})\wb{Z}_{(a_{j-1}:\wh{a}_{h_{j-1}+1}]}^2 + \mathbb{E}\sum_{j=1}^k(a_j-\wh{a}_{h_j})\wb{Z}_{(\wh{a}_{h_j}:a_j]}^2\lesssim \sigma^2k\log\log(16n/k).$$
Combining the above bounds, we obtain the desired oracle inequality as long as $k\geq 2$.

To complete the proof, we also give the argument for $k=1$. In this case $\wh{\theta}_i=\wb{X}$ and $\theta^{(1)}_i=\wb{\theta}^*$ for all $i\in[n]$. Therefore, $\mathbb{E}\|\wh{\theta}-\theta^{(1)}\|^2=\sigma^2$, which leads to $\mathbb{E}\|\wh{\theta}-\theta^*\|^2\leq 2\|\theta^{(1)}-\theta^*\|^2+2\sigma^2.$
\end{proof}

\begin{proof}[Proof of Theorem \ref{thm:adaptive-oracle}]
We use the same notations in the proof of Theorem \ref{thm:upper2}, except that $\wh{\theta}$ is now for $\wh{\theta}(\Theta_{\wh{k}}^{\uparrow})$ and $\wh{A}_{\wh{k}}=\{\wh{a}_h\}$. By the definition of $\wh{\theta}$, we have
$$\|X-\wh{\theta}\|^2+\text{pen}_{\tau}(\wh{k})\leq \|X-\wh{\theta}(\Theta_k^{\uparrow})\|^2+\text{pen}_{\tau}(k)\leq \|X-\theta^{(k)}\|^2+\text{pen}_{\tau}(k).$$
By (\ref{eq:qexpand-1}) and (\ref{eq:qexpand-2}), we obtain the following inequality
\begin{equation}
\|\wh{\theta}-\theta^*\|^2+\text{pen}_{\tau}(\wh{k})\leq \|\theta^{(k)}-\theta^*\|^2 + 2\iprod{X-\theta^*}{\wh{\theta}-\theta^{(k)}}+\text{pen}_{\tau}(k). \label{eq:basic-k-piece-pen}
\end{equation}
After bounding $2\iprod{X-\theta^*}{\wh{\theta}-\theta^{(k)}}$ by the same argument in the proof of Theorem \ref{thm:upper2}, we obtain
\begin{eqnarray}
\nonumber && \|\wh{\theta}-\theta^*\|^2 + 2\text{pen}_{\tau}(\wh{k}) - 2\text{pen}_{\tau}(k) \\
\label{eq:used-in-the-next-proof} &\leq& C\|\theta^{(k)}-\theta^*\|^2 + C\sum_{j=1}^k(a_j-a_{j-1})\wb{Z}_{(a_{j-1}:a_j]}^2 \\
\label{eq:also-need-label} && + C\sum_{j=1}^k(\wh{a}_{h_{j-1}+1}-a_{j-1})\wb{Z}_{(a_{j-1}:\wh{a}_{h_{j-1}+1}]}^2 + C\sum_{j=1}^k(a_j-\wh{a}_{h_j})\wb{Z}_{(\wh{a}_{h_j}:a_j]}^2 \\
\label{eq:error-term-pen1} && + C\sum_{j=1}^k\sum_{\{h: (\wh{a}_{h-1}:\wh{a}_h]\subset(a_{j-1}:a_j]\}}(\wh{a}_h-\wh{a}_{h-1})|\wb{Z}'_{(a_{j-1}:\wh{a}_h]}|^2 \\
\label{eq:error-term-pen2} && + C\sum_{j=1}^k\sum_{\{h: (\wh{a}_{h-1}:\wh{a}_h]\subset(a_{j-1}:a_j]\}}(\wh{a}_h-\wh{a}_{h-1})|\wb{Z}_{(\wh{a}_{h-1}:a_j]}''|^2,
\end{eqnarray}
where $C>0$ is some universal constant. Take expectation on both sides of the inequality, and then the right hand side can all be bounded similarly as in the proof of Theorem \ref{thm:upper2} except for (\ref{eq:error-term-pen1}) and (\ref{eq:error-term-pen2}). In fact, (\ref{eq:error-term-pen1}) and (\ref{eq:error-term-pen2}) can be bounded by the same argument that leads to (\ref{eq:radiohead}). The only difference is that now the $\{\wh{a}_h\}$ in the definitions of $\delta_+(a_{j-1},a_j,\ell)$ and $\delta_-(a_{j-1},a_j,\ell)$ are from $\wh{A}_{\wh{k}}$ instead of $\wh{A}_k$.
Therefore, we need the following lemma, whose proof will be given in Section \ref{sec:aux} in the supplement.
\begin{lemma} \label{lem:er12-all-k}
There exists a universal constant $C>0$, such that
\begin{align*}
 &\max\Big\{\sum_{f\geq 0}2^{-f}\sum_{j=1}^{k}\sum_{\{\ell\geq 1:a_{j-1}+2^{\ell-1}\leq a_j\}}\mathbb{E}\delta_+(a_{j-1},a_j,\ell+f)\xi_+(a_{j-1},a_j,\ell), \\
&\quad\quad\quad \sum_{f\geq 0}2^{-f}\sum_{j=1}^{k}\sum_{\{\ell\geq 1:a_{j-1}\leq a_j-2^{\ell-1}\}}\mathbb{E}\delta_-(a_{j-1},a_j,\ell+f)\xi_-(a_{j-1},a_j,\ell) \Big\}\\
\leq& C\left\{\sigma^2k\log\log\left(\frac{16n}{k}\right)+\sigma^2\mathbb{E}\wh{k}\log\log\left(\frac{16n}{\wh{k}}\right)\right\},
\end{align*}
where the $\{\wh{a}_h\}$ in the definitions of $\delta_+(a_{j-1},a_j,\ell)$ and $\delta_-(a_{j-1},a_j,\ell)$ are from $\wh{A}_{\wh{k}}$ instead of $\wh{A}_k$.
\end{lemma}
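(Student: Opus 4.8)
The plan is to rerun the proof of Lemma~\ref{lem:er12} (given in Section~\ref{sec:aux} in the supplement), keeping track of the two distinct roles that the number of pieces plays there. On the left-hand side, $k$ indexes the oracle blocks $(a_{j-1},a_j]$, and this count does not change here; the second factor of $k$ in the bound $C\sigma^2 k\log\log(16n/k)$ counts the estimated knots $\{\wh{a}_h\}$, and only this count changes when $\wh{A}_k$ is replaced by $\wh{A}_{\wh{k}}$, becoming the random $\wh{k}$. This is exactly why the extra term $\sigma^2\mathbb{E}[\wh{k}\log\log(16n/\wh{k})]$ appears.

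The first step is a pathwise reduction. Fix the noise $Z$, hence $\wh{\theta}(\Theta_{\wh{k}}^{\uparrow})$, its knot set $\wh{A}_{\wh{k}}=\{\wh{a}_h\}$, and $\wh{k}$. Since $\delta_+(a_{j-1},a_j,\ell)\in\{0,1\}$, with $\delta_+(a_{j-1},a_j,\ell)=1$ only when the $\ell$-th dyadic block of $(a_{j-1},a_j]$ hosts a knot of $\wh{A}_{\wh{k}}$, and each such knot lies in exactly one such block, we have $\sum_{\ell}\delta_+(a_{j-1},a_j,\ell)\,\xi_+(a_{j-1},a_j,\ell)=\sum_{\ell\in S_j}\xi_+(a_{j-1},a_j,\ell)$, where $S_j$ is the set of scales hosting a knot of $\wh{A}_{\wh{k}}$; thus $|S_j|\le N_j$, where $N_j$ is the number of knots of $\wh{A}_{\wh{k}}$ in $(a_{j-1},a_j]$, and $\sum_{j=1}^k N_j\le\wh{k}$. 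The same holds for $\delta_-\xi_-$.

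Next I would control the per-block contribution, reusing the Levy and Doob maximal inequalities from the proof of Lemma~\ref{lem:er12}: over a single dyadic block $\mathbb{E}\,\xi_+(a_{j-1},a_j,\ell)\lesssim\sigma^2$, while over all scales of $(a_{j-1},a_j]$ the non-asymptotic law of iterated logarithm (as in the proof of Lemma~\ref{lem:simple-lil}) gives $\mathbb{E}\max_\ell\xi_+(a_{j-1},a_j,\ell)\lesssim\sigma^2\log\log(a_j-a_{j-1})$. Truncating each $\xi_+(a_{j-1},a_j,\ell)$ at level $\asymp\sigma^2\log\log(a_j-a_{j-1})$, bounding the truncated part of $\sum_{\ell\in S_j}\xi_+$ by $|S_j|\le N_j$ times that level and the excess by the sub-exponential tail of $\xi_+$ summed over the at most $\log_2(a_j-a_{j-1})$ scales, gives $\mathbb{E}\big[\sum_{\ell\in S_j}\xi_+(a_{j-1},a_j,\ell)\big]\lesssim\sigma^2\big(1+N_j\log\log(a_j-a_{j-1})\big)$; since the truncation level depends only on the deterministic block, the random $N_j$ (hence $\wh{k}$) stays inside the expectation. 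Summing over $j\in[k]$, using $\sum_j N_j\le\wh{k}$, $\sum_j(a_j-a_{j-1})=n$, and the concavity of $x\mapsto x\log\log(16n/x)$ and of $\log\log$, one bounds $\sum_{j=1}^k N_j\log\log(a_j-a_{j-1})$ pathwise by $C\{k\log\log(16n/k)+\wh{k}\log\log(16n/\wh{k})\}$; taking expectation and adding the symmetric $\delta_-\xi_-$ bound completes the proof.

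The main obstacle is that $\wh{k}$ and the knot positions are functions of the noise $Z$, so one cannot condition on them before invoking the maximal inequalities; the remedy, already in force in Lemma~\ref{lem:er12}, is to dominate each term by a noise-only maximal statistic (a maximum of a partial-sum process over all admissible endpoints within a block), which lets the count $N_j$, and therefore $\wh{k}$, survive inside the expectation. A secondary subtlety is obtaining the sharp argument $\log\log(16n/\wh{k})$ rather than a cruder $\log\log(16n)$: this needs the observation that a knot at dyadic scale $\ell$ costs only $\log\log(2^\ell)$, together with the concluding convexity estimate on $\sum_j N_j\log\log(a_j-a_{j-1})$ under the constraints $\sum_j N_j\le\wh{k}$ and $\sum_j(a_j-a_{j-1})=n$.
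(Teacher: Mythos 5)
Your overall plan (pathwise reduction $\sum_{\ell}\delta_+\xi_+=\sum_{\ell\in S_j}\xi_+$, truncation at a deterministic block-dependent level, then a pathwise counting bound) is a genuinely different route from the paper, which instead reruns the proof of Lemma \ref{lem:er12} with a union bound over all knot configurations stratified by the value of $\wt{k}=\min\{\wh{k},m\}$, obtains an exponential tail at level $\wt{k}\log(em/\wt{k})$, and finishes with a deterministic three-case comparison $\wt{k}\log(em/\wt{k})\lesssim k\log\log(16n/k)+\wh{k}\log\log(16n/\wh{k})$. However, your final step has a genuine gap: the claimed pathwise inequality $\sum_{j}N_j\log\log(16n_j)\leq C\{k\log\log(16n/k)+\wh{k}\log\log(16n/\wh{k})\}$ (with $n_j=a_j-a_{j-1}$, $\sum_j N_j\leq\wh{k}$, $\sum_j n_j=n$) is false. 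Take $k=1$, so $n_1=n$, and a realization with $\wh{k}\asymp n$, so $N_1\asymp n$: the left side is of order $n\log\log(16n)$ while the right side is of order $\log\log(16n)+n\log\log 16\asymp n$, so no universal constant $C$ works. The repair is to keep the scale-count cap that your own reduction provides but then discards: $|S_j|\leq\min\{N_j,\ceil{1\vee\log_2 n_j}\}$, since the dyadic windows at different scales are disjoint. With this cap the pathwise bound does hold (e.g.\ split the blocks according to whether $n_j\leq(16n/\wt{K})^2$ with $\wt{K}=\sum_j\min\{N_j,\ceil{1\vee\log_2 n_j}\}$, using $\wt K\log_2(n/\wt K)\leq n$ for the few large blocks), but this is an extra argument you have not supplied, and without it the proof as written fails exactly in the regime where $\wh{k}$ is large, which the lemma must cover.

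A secondary problem is your appeal to ``the sub-exponential tail of $\xi_+$'' to control the excess above the truncation level: under the paper's moment condition (\ref{eq:moment}) the errors only have $2+\epsilon$ moments, so $\xi_+$ is not sub-exponential. You do need the two-layer truncation from the proof of Lemma \ref{lem:er12} (split $Z_i$ at $\sigma^2 2^{\ell}/\ell$, control the heavy part in expectation via Doob, H\"older and Markov using the $(2+\epsilon)$-moment, and only then get the mixed sub-Gaussian/sub-exponential tail for the light part via Levy--Ottaviani and Bernstein), and the truncation level must carry a large enough constant relative to the Bernstein constant so that the per-scale excess $(\log 16n_j)^{-cA}$ remains summable over the $\asymp\log_2 n_j$ scales of each block. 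These pieces are available in the paper's toolbox and your reduction to noise-only maximal statistics is sound, so the approach is salvageable, but as stated both the tail claim and the concluding combinatorial inequality are incorrect.
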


Then, for some (possibly different) universal constant $C>0$, we have
\begin{eqnarray*}
&& \mathbb{E}\|\wh{\theta}-\theta^*\|^2 + 2\mathbb{E}\text{pen}_{\tau}(\wh{k}) \\
&\leq& C\|\theta^{(k)}-\theta^*\|^2 + 2\text{pen}_{\tau}(k) \\
&& + C\left\{\sigma^2k\log\log\left(\frac{16n}{k}\right)+\sigma^2\mathbb{E}\wh{k}\log\log\left(\frac{16n}{\wh{k}}\right)\right\}.
\end{eqnarray*}
Choosing $\tau=C_1\sigma^2$ with a sufficiently large constant $C_1>0$, we get
$$\mathbb{E}\|\wh{\theta}-\theta^*\|^2\lesssim \|\theta^{(k)}-\theta^*\|^2+\sigma^2k\log\log\left(\frac{16n}{k}\right),$$
which is the desired results for $k\geq 2$.

To complete the proof, we also need to give the analysis for $k=1$. It is easy to see that in this case the bounds in Lemma \ref{lem:simple-lil} and Lemma \ref{lem:er12-all-k} can be improved to $C\sigma^2$ and $C\sigma^2 + C\sigma^2\mathbb{E}\wh{k}\log\log\left(\frac{16n}{\wh{k}}\right)\ind\{\wh{k}\geq 2\}$, respectively. Therefore, the choice $\tau=C_1\sigma^2$ with a large constant $C_1>0$ leads to 
\begin{equation}
\mathbb{E}\|\wh{\theta}-\theta^*\|^2\lesssim \|\theta^{(1)}-\theta^*\|^2 + \sigma^2.\label{eq:pf-k=1-later}
\end{equation}
The proof is thus complete.
\end{proof}

\subsection{Proofs of lower bounds}\label{sec:proof-lower}

This section is devoted to proving the lower bounds in Section \ref{sec:upper}. 

\begin{proof}[Proof of Proposition \ref{prop:lower-LSE}]
Without loss of generality, consider the case when $n/k$ is an integer. Then, $[n]=\bigcup_{j=1}^{k}\mathcal{C}_j$, where $\mathcal{C}_j$ is the $j$th consecutive interval with cardinality $n/k$. Then, we take $\theta^*\in\Theta_{k}^{\uparrow}$ with $\theta_i^*=\mu_j$ if $i\in \mathcal{C}_j$. Use the notation $\mathcal{H}_n=\{\theta\in\mathbb{R}^n:\theta_1\leq ...\leq \theta_n\}$. Then, as long as $\mu_1,...,\mu_k$ are sufficiently separated,
$$\min_{\theta\in\mathcal{H}_n}\sum_{i=1}^n(X_i-\theta_i)^2=\sum_{j=1}^{k}\min_{\theta\in\mathcal{H}_{n/k}}\sum_{i\in\mathcal{C}_j}(X_i-\theta_i)^2,$$
with high probability. This high-probability event is denoted as $E$. We take $\mu_j=\kappa j$ for some $\kappa>0$. Then, as $\kappa\rightarrow\infty$, $\mathbb{P}(E^c)$ converges to $0$. In other words, $\mathbb{P}(E^c)$ is arbitrarily small for sufficiently large $\kappa$. We have
$$\mathbb{E}\|\hat{\theta}-\theta^*\|^2\geq \sum_{j=1}^{k}\mathbb{E}\|\hat{\theta}_{\mathcal{C}_j}-\theta^*_{\mathcal{C}_j}\|^2-\mathbb{E}\|\hat{\theta}-\theta^*\|^2\ind_{E^c}.$$
Since $\mathbb{E}\|\hat{\theta}-\theta^*\|^2\ind_{E^c}\leq \sqrt{\mathbb{E}\|\hat{\theta}-\theta^*\|^4}\sqrt{\mathbb{P}(E^c)}$ is arbitrarily small for sufficiently large $\kappa$, the term $\mathbb{E}\|\hat{\theta}-\theta^*\|^2\ind_{E^c}$ can be neglected. It is sufficient to give a lower bound for $\sum_{j=1}^{k}\mathbb{E}\|\hat{\theta}_{\mathcal{C}_j}-\theta^*_{\mathcal{C}_j}\|^2$. Note that
$$\sum_{j=1}^{k}\mathbb{E}\|\hat{\theta}_{\mathcal{C}_j}-\theta^*_{\mathcal{C}_j}\|^2=\sum_{j=1}^{k}\mathbb{E}\|\Pi_{\mathcal{H}_{n/k}}Z_{\mathcal{C}_j}\|^2,$$
where $\Pi_{\mathcal{H}_{n/k}}$ is the projection operator onto the space $\mathcal{H}_{n/k}$. By \cite{amelunxen2014living}, $\|\Pi_{\mathcal{H}_{n/k}}Z_{\mathcal{C}_j}\|^2\geq C\log(en/k)$, leading to the desired result.
\end{proof}

We continue to state the proofs of other results. The main tool we will use is Fano's lemma. For any probability measures $\mathbb{P},\mathbb{Q}$, define the Kullback-Leibler divergence to be 
\[
D(\mathbb{P}||\mathbb{Q})=\int\Bigl(\log\frac{d\mathbb{P}}{d\mathbb{Q}}\Bigr)d\mathbb{P}. 
\]
The Fano's lemma is stated as follows. See \cite{ibragimov2013statistical} and \cite{tsybakov2009introduction} for references.

\begin{proposition} \label{prop:fano}
Let $(\Theta,\rho)$ be a metric space and $\{\mathbb{P}_{\theta}:\theta\in\Theta\}$ be a collection of probability measures. For any totally bounded $T\subset\Theta$, define the Kullback-Leibler diameter by
$$d_{\text{KL}}(T)=\sup_{\theta,\theta'\in T}D(\mathbb{P}_{\theta}||\mathbb{P}_{\theta'}).$$
Then
\begin{equation}
\label{eq:fanoKL} \inf_{\wh{\theta}}\sup_{\theta\in\Theta}\mathbb{P}_{\theta}\Bigl[\rho^2\Big\{\wh{\theta}(X),\theta\Big\}\geq\frac{\epsilon^2}{4}\Bigr] \geq 1-\frac{d_{\text{KL}}(T)+\log 2}{\log\mathcal{M}(\epsilon,T,\rho)},
\end{equation}
for any $\epsilon>0$, where $\mathcal{M}(\epsilon,T,\rho)$ stands for the packing number of $T$ with radius $\epsilon$ with respect to the metric $\rho$.
\end{proposition}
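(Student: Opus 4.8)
The plan is to pass from the estimation problem to a finite hypothesis test via a maximal packing, and then to apply the information-theoretic version of Fano's inequality; this is the classical argument (cf. \cite{tsybakov2009introduction}), so I would only outline the steps.

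First I would fix a maximal $\epsilon$-packing $\{\theta_1,\dots,\theta_M\}\subset T$ with $M=\mathcal{M}(\epsilon,T,\rho)$, so that $\rho(\theta_i,\theta_j)\ge\epsilon$ whenever $i\neq j$; we may assume $M\ge 2$, since otherwise the right-hand side of \eqref{eq:fanoKL} is $-\infty$ and there is nothing to prove. Given an arbitrary estimator $\wh\theta$, I would define the induced test $\psi(X)=\argmin_{j\in[M]}\rho(\wh\theta(X),\theta_j)$. The key reduction is that, under $\mathbb{P}_{\theta_j}$, the event $\{\psi(X)\neq j\}$ forces $\rho^2(\wh\theta(X),\theta_j)\ge\epsilon^2/4$: indeed $\rho(\wh\theta(X),\theta_{\psi(X)})\le\rho(\wh\theta(X),\theta_j)$ together with $\rho(\theta_j,\theta_{\psi(X)})\ge\epsilon$ and the triangle inequality give $\rho(\wh\theta(X),\theta_j)\ge\epsilon/2$. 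Consequently
$$\sup_{\theta\in\Theta}\mathbb{P}_\theta\Bigl[\rho^2\{\wh\theta(X),\theta\}\ge\epsilon^2/4\Bigr]\ \ge\ \max_{1\le j\le M}\mathbb{P}_{\theta_j}[\psi(X)\neq j]\ \ge\ \frac{1}{M}\sum_{j=1}^M\mathbb{P}_{\theta_j}[\psi(X)\neq j],$$
so it remains to lower bound this average error over all tests $\psi$.

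Next I would place the uniform prior $J\sim\mathrm{Unif}([M])$ on the hypotheses and treat $J\to X\to\psi(X)$ as a Markov chain, writing $p_e=\mathbb{P}[\psi(X)\neq J]$ for the Bayes error (which equals the displayed average). Fano's inequality gives $H(p_e)+p_e\log(M-1)\ge H(J\mid\psi(X))\ge H(J\mid X)=\log M-I(J;X)$; using $H(p_e)\le\log 2$ and $\log(M-1)\le\log M$ this rearranges to $p_e\ge 1-(I(J;X)+\log 2)/\log M$. Finally I would bound the mutual information: since $J$ is uniform, $I(J;X)=\frac{1}{M}\sum_j D(\mathbb{P}_{\theta_j}\Vert\bar{\mathbb{P}})$ with $\bar{\mathbb{P}}=\frac{1}{M}\sum_\ell\mathbb{P}_{\theta_\ell}$, and convexity of $D(\cdot\Vert\cdot)$ in its second argument gives $D(\mathbb{P}_{\theta_j}\Vert\bar{\mathbb{P}})\le\frac{1}{M}\sum_\ell D(\mathbb{P}_{\theta_j}\Vert\mathbb{P}_{\theta_\ell})\le d_{\text{KL}}(T)$. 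Substituting $I(J;X)\le d_{\text{KL}}(T)$ and $\log M=\log\mathcal{M}(\epsilon,T,\rho)$ into the bound for $p_e$ and combining with the testing reduction yields \eqref{eq:fanoKL}. There is no genuine obstacle here; the only points requiring a little care are getting the constant $\epsilon^2/4$ correct in the packing/triangle-inequality step and invoking the entropy form of Fano's inequality together with the data-processing bound $H(J\mid\psi(X))\ge H(J\mid X)$, while the degenerate cases $M\le 1$ or $d_{\text{KL}}(T)=\infty$ make the claim vacuous.
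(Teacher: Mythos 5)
Your proof is correct: the reduction from estimation to testing via a maximal $\epsilon$-packing and the triangle inequality (giving the $\epsilon^2/4$ threshold), followed by the entropy form of Fano's inequality and the convexity bound $I(J;X)\leq d_{\text{KL}}(T)$, is exactly the classical argument. The paper itself gives no proof of Proposition~\ref{prop:fano} and simply cites \cite{ibragimov2013statistical} and \cite{tsybakov2009introduction}; your write-up is the standard proof contained in those references, so there is nothing to reconcile.
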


\begin{proof}[Proof of Theorem \ref{thm:lower}]
We only need to deal with the case when $n>C$ for a sufficiently large constant, since when $n\leq C$, the rate is a constant and the conclusion automatically holds. 

When $k=1$, the standard lower bound argument for the one-dimensional normal mean problem \citep{lehmann2006theory} applies here, and we get the desired rate.

The case $k=2$ is studied in Section \ref{sec:upper}. 
Combining (\ref{eq:fanoKL}), (\ref{eq:packing}), and (\ref{eq:KL}) gives
$$\inf_{\wh{\theta}}\sup_{\theta\in\Theta_2}\mathbb{P}\Bigl(\norm{\wh{\theta}-\theta}^2\geq \frac{\alpha\sigma^2}{80}\log\log_2n\Bigr)\geq 1-\frac{6\alpha\log\log_2n+\log 2}{\log\log_2n}\geq c.$$
with $\alpha=1/60$ and a sufficiently small value $c>0$. Thus, with an application of Markov's inequality, we obtain the desired minimax lower bound in expectation.

Now we derive the lower bound for $k\geq 3$. 

We first consider the case $n>C$, $k>C$ and $n/k>C$ for some sufficiently large constant $C>0$. Define the space $\Theta_2^{\uparrow}(\wt{n},a,b)\subset\mathbb{R}^{\wt{n}}$ to be the class of vectors of length $\wt{n}$ that have two non-decreasing pieces taking values between $a$ and $b$ respectively. Then, construct the following space
$$\wt{T}=\bigtimes_{\ell=1}^{\ceil{\frac{k}{2}}}\wt{T}_\ell.$$ 
where for $1\leq \ell\leq \ceil{\frac{k}{2}}-1$, we define
$$\wt{T}_\ell=\Theta_2^{\uparrow}\Bigl\{\floor{\frac{2n}{k}},(2\ell-2)\sqrt{2\alpha\sigma^2\log\log_2n},(2\ell-1)\sqrt{2\alpha\sigma^2\log\log_2n}\Bigr\},$$
and
\[
\wt{T}_{\ceil{\frac{k}{2}}}= \Bigl\{k\sqrt{2\alpha\sigma^2\log\log_2n}\Bigr\}^{n-\floor{\frac{2n}{k}}\bigl(\ceil{\frac{k}{2}}-1\bigr)}.
\]
Observe that $\wt{T}\subset\Theta_k^{\uparrow}$. Thus,
\begin{eqnarray}
\nonumber \inf_{\wh{\theta}}\sup_{\theta\in\Theta_k^{\uparrow}}\mathbb{E}\norm{\wh{\theta}-\theta}^2 &\geq& \inf_{\wh{\theta}}\sup_{\theta\in \wt{T}}\mathbb{E}\norm{\wh{\theta}-\theta}^2 \\
\label{eq:suff} &=& \inf_{\wh{\theta}=(\wh{\eta}_1,...,\wh{\eta}_{\ceil{k/2}})}\sum_{\ell=1}^{\ceil{\frac{k}{2}}}\sup_{\eta_\ell\in\wt{T}_\ell}\mathbb{E}\norm{\wh{\eta}_\ell-\eta_\ell}^2 \\
\nonumber &\geq& \sum_{\ell=1}^{\ceil{\frac{k}{2}}-1}\inf_{\wh{\eta}_\ell}\sup_{\eta_\ell\in\wt{T}_\ell}\mathbb{E}\norm{\wh{\eta}_\ell-\eta_\ell}^2 \\
\label{eq:useF} &\geq& c_1\Bigl(\left\lceil\frac{k}{2}\right\rceil-1\Bigr)\log\log\floor{\frac{2n}{k}} \\
\nonumber &\geq& c_2k\log\log\Bigl(\frac{16n}{k}\Bigr),
\end{eqnarray}
where the equality (\ref{eq:suff}) is by taking advantage of the separable structure and a sufficiency argument, and the inequality (\ref{eq:useF}) is by the same argument that we use to derive the lower bound for the case $k=2$. 

Secondly, we consider the rest of settings. When $n\leq C$, the rate is a constant and the result automatically holds. When $3\leq k\leq C$, the rate $\log\log 16n$ is immediately a lower bound by the fact that $\Theta_2^{\uparrow}\subset\Theta_k^{\uparrow}$. When $n/k\leq C$, we have $\Theta_{n/C}^{\uparrow}\subset\Theta_k^{\uparrow}$. Therefore, 
$$\inf_{\wh{\theta}}\sup_{\theta\in\Theta_k^{\uparrow}}\mathbb{E}\norm{\wh{\theta}-\theta}^2\geq \inf_{\wh{\theta}}\sup_{\theta\in\Theta_{n/C}^{\uparrow}}\mathbb{E}\norm{\wh{\theta}-\theta}^2\geq c_3n.$$
Hence, the proof is complete.
\end{proof}

\section*{Acknowledgement}

The research of C. Gao was supported in part by NSF grant DMS-1712957. The research of F. Han was supported in part by NSF grant DMS-1712536. The research of C.-H. Zhang was supported in part by NSF grants
{DMS-1513378,} IIS-1407939, DMS-1721495, and IIS-1741390.
The authors thank Qiyang Han for carefully reading the manuscript and many insightful suggestions and Antoine Picard for pointing out an error in the proof. The authors also thank two referees and an associate editor for their helpful feedbacks that greatly improve the paper.

\bibliographystyle{apalike}
\bibliography{PCN}

\newpage{}


\appendix

\begin{center}
{\large {Supplement to ``On Estimation of Isotonic Piecewise Constant Signals"}}
\end{center}

\noindent This supplementary material provides proofs of remaining results in Section \ref{sec:disc}, as well as some auxiliary lemmas.

\section{Proofs of remaining upper bounds}\label{sec:proof-lower}

\begin{proof}[Proofs of Theorem \ref{thm:g-adaptation}]
We adopt the notations in the proof of Theorem \ref{thm:adaptive-oracle}. 
The proof is separated to three steps. In the first step, we show that, universally for all $k\in[n-1]$,
\[
\E\|\wh{\theta}-\theta^*\|^2\leq C\left(\|\theta^{(k)}-\theta^*\|^2+\sigma^2k\log\log(16n/k)\right).
\]
In the second step, we show that, universally over $k\in[n]$, 
\[
\E\|\wh{\theta}-\theta^*\|^2\leq C\sigma^2\min\Big\{n, \log(en)+n^{1/3}(V/\sigma)^{2/3}\Big\}.
\]
In the third step, we show that, for $k=1$,
\[
\E\|\wh{\theta}-\theta^*\|^2\leq C\left(\|\theta^{(1)}-\theta^*\|^2+\sigma^2\right).
\]
Combining the above three inequalities, we get the desired bound.

{\bf{Step 1.}} Using the same argument in proving Theorem \ref{thm:adaptive-oracle}, we obtain the bounds (\ref{eq:used-in-the-next-proof})-\eqref{eq:error-term-pen2}. The two terms in (\ref{eq:also-need-label}) can be bounded by $\sigma^2k\log\log(16n/k)$ up to a constant in expectation according to Lemma \ref{lem:simple-lil}.
For (\ref{eq:error-term-pen1}) and (\ref{eq:error-term-pen2}), we bound them by the following lemma.
\begin{lemma} \label{lem:er12-all-k-2-part}
There exist two random variables $\wt{R}_1$ and $\wt{R}_2$ such that
\begin{align}
\label{eq:replica} & \max\Big\{\sum_{f\geq 0}2^{-f}\sum_{j=1}^{k}\sum_{\{\ell\geq 1:a_{j-1}+2^{\ell-1}\leq a_j\}}\mathbb{E}\delta_+(a_{j-1},a_j,\ell+f)\xi_+(a_{j-1},a_j,\ell), \\
\nonumber &\quad\quad\quad \sum_{f\geq 0}2^{-f}\sum_{j=1}^{k}\sum_{\{\ell\geq 1:a_{j-1}\leq a_j-2^{\ell-1}\}}\mathbb{E}\delta_-(a_{j-1},a_j,\ell+f)\xi_-(a_{j-1},a_j,\ell) \Big\}\\
\nonumber \leq& \wt{R}_1+\wt{R}_2,
\end{align}
where the $\{\wh{a}_h\}$ in the definitions of $\delta_+(a_{j-1},a_j,\ell)$ and $\delta_-(a_{j-1},a_j,\ell)$ are from $\wh{A}_{\wh{k}}$ instead of $\wh{A}_k$. For the two terms in the bound, there exist universal constants $C,C'>0$, such that $\mathbb{E}\wt{R}_1\leq C\sigma^2k\log\log(16n/k)$ and $\wt{R}_2$ satisfies, for any $t\geq 0$,
\begin{equation}
\mathbb{P}\left[\wt{R}_2>C\sigma^2\left(k\log\log(16n/k)+\wh{k}\log\log(16n/\wh{k})+t\right)\right] \leq \exp(-C't). \label{eq:R-2-high-prob}
\end{equation}
\end{lemma}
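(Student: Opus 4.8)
The plan is to reopen the proof of Lemma~\ref{lem:er12-all-k} and split its bound into a part governed only by the oracle knots $\{a_j\}$ (which becomes $\wt R_1$) and a part governed by the estimated knots $\wh A_{\wh k}$ (which becomes $\wt R_2$), keeping the latter as an explicit random variable with exponential tails rather than passing to expectations. By symmetry it suffices to treat the $\xi_+$ sum; the $\xi_-$ sum is identical. Since $\delta_+(a_{j-1},a_j,\ell)\in\{0,1\}$, one has
\[
\sum_{j=1}^{k}\sum_{\{\ell\ge1:\,a_{j-1}+2^{\ell-1}\le a_j\}}\delta_+(a_{j-1},a_j,\ell)\,\xi_+(a_{j-1},a_j,\ell)=\sum_{(j,\ell)\in\mathcal A}\xi_+(a_{j-1},a_j,\ell),
\]
where $\mathcal A$ is the random collection of ``active'' dyadic blocks, those containing a knot of $\wh A_{\wh k}$. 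These blocks are pairwise disjoint subsets of $[n]$, so each knot activates at most one of them and $|\mathcal A|\le|\wh A_{\wh k}|\le\wh k$. For each oracle piece $j$ let $\ell_j^{\star}$ attain $\max\{\xi_+(a_{j-1},a_j,\ell):(j,\ell)\in\mathcal A\}$, and put
\[
\wt R_1:=\sum_{j=1}^{k}\max_{\{\ell\ge1:\,a_{j-1}+2^{\ell-1}\le a_j\}}\xi_+(a_{j-1},a_j,\ell),\qquad
\wt R_2:=\sum_{j=1}^{k}\ \sum_{\{\ell:\,(j,\ell)\in\mathcal A,\ \ell\ne\ell_j^{\star}\}}\xi_+(a_{j-1},a_j,\ell),
\]
to each of which we add the corresponding $\xi_-$ expression. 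Then $\wt R_1+\wt R_2$ dominates the left-hand side of \eqref{eq:replica}, and $\wt R_2$ is a sum of at most $2\wh k$ of the quantities $\xi_\pm(a_{j-1},a_j,\ell)$.

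The term $\wt R_1$ is immediate: on the support of $\xi_+(a_{j-1},a_j,\ell)$ we have $2^{\ell}\le 2(t-a_{j-1})$, so $\xi_+(a_{j-1},a_j,\ell)\le 2\max_{a_{j-1}<t\le a_j}(t-a_{j-1})\wb Z^2_{(a_{j-1}:t]}$, and maximizing over $\ell$ changes nothing; thus $\wt R_1$ is dominated termwise in $j$ by twice the quantities bounded in Lemma~\ref{lem:simple-lil}, giving $\mathbb{E}\wt R_1\le C\sigma^2 k\log\log(16n/k)$.

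The substance is the tail bound \eqref{eq:R-2-high-prob} for $\wt R_2$. Bound $\wt R_2\le\sum_{r=1}^{2\wh k}\xi_{(r)}$, the sum of the $2\wh k$ largest among all $\xi_\pm(a_{j-1},a_j,\ell)$, and use the layer-cake bound $\sum_{r\le m}\xi_{(r)}\le\int_0^\infty\min\{m,N(u)\}\,du$ with $N(u):=\#\{(j,\ell,\pm):\xi_\pm(a_{j-1},a_j,\ell)>u\}$. Two inputs feed the integral. First, a single-block tail: from $\xi_+(a_{j-1},a_j,\ell)\le 4\cdot2^{-\ell}\max_{m\le2^{\ell}}\bigl(\sum_{i=a_{j-1}+1}^{a_{j-1}+m}Z_i\bigr)^2$, Doob's $L^2$ maximal inequality, and \eqref{eq:moment} --- truncating the $Z_i$ at level $\sigma\sqrt{C\log n}$ where an exponential rate is needed, the discarded part contributing only $O(\sigma^2)$ overall --- one gets $\mathbb{P}(\xi_\pm(a_{j-1},a_j,\ell)>\sigma^2u)\le Ce^{-cu}$, and, after a union over the $O(\log(a_j-a_{j-1}))$ scales, $\mathbb{P}(\max_\ell\xi_+(a_{j-1},a_j,\ell)>\sigma^2(\log\log(a_j-a_{j-1})+s))\le Ce^{-cs}$. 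Second, the per-piece counts $N_j(u):=\#\{(\ell,\pm):\xi_\pm(a_{j-1},a_j,\ell)>u\}$ are independent across $j$ (they depend on disjoint blocks of the $Z_i$'s), so $N(u)=\sum_j N_j(u)$ concentrates around its mean at a Bernstein rate. One then peels $\int_0^\infty\min\{m,N(u)\}\,du$ by scale: thresholds $u$ below a level of order $\sigma^2\log\log(16n/m)$ contribute at most $C\sigma^2 m\log\log(16n/m)$; larger thresholds contribute, by the Bernstein concentration of $N(u)$, the exponential single-block tails, and concavity of $t\mapsto\log\log t$ together with $\sum_j(a_j-a_{j-1})=n$ (which absorbs the piecewise baselines $\log\log(a_j-a_{j-1})$ into $\log\log(16n/k)$), at most $C\sigma^2 k$ plus a fluctuation whose exceedance probability at level $\sigma^2 t$ is $e^{-ct}$. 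Taking $m=2\wh k$ and using monotonicity of $m\mapsto m\log\log(16n/m)$ on $[1,n]$ yields \eqref{eq:R-2-high-prob}.

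The delicate point --- and the source of $\wh k\log\log(16n/\wh k)$ in place of the cruder $\wh k\log\log n$ --- is that $\wh k$ is random and correlated with the $\xi_\pm$'s through the reduced-isotonic fit, so one cannot simply condition on it: the layer-cake and peeling argument must run uniformly over all values of $\wh k\in[n]$ and over which dyadic blocks are active, with the peeling scales and union-bound losses calibrated so that exactly the term $m\log\log(16n/m)$ appears. I expect this calibration, together with checking that the truncation of the $Z_i$ is harmless under the weak moment hypothesis \eqref{eq:moment} while still producing the exponential tail in \eqref{eq:R-2-high-prob}, to be the main technical hurdles; the remaining estimates are of the kind already used for Lemmas~\ref{lem:simple-lil}, \ref{lem:er12}, and \ref{lem:er12-all-k}.
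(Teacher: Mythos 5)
Your decomposition has sound elements: putting the per-piece maxima $\sum_j\max_\ell\xi_\pm(a_{j-1},a_j,\ell)$ into $\wt R_1$ and controlling it through Lemma \ref{lem:simple-lil} (via $2^{\ell}\le 2(t-a_{j-1})$ on the support of $\xi_+$) is correct, as is the count that the disjoint dyadic windows give at most $2\wh k$ active pairs. The first genuine gap is the single-block tail you build $\wt R_2$ on. Under \eqref{eq:moment} the errors have only $2+\epsilon$ moments, so $\mathbb{P}\bigl(\xi_\pm(a_{j-1},a_j,\ell)>\sigma^2u\bigr)\le Ce^{-cu}$ is false, and your fixed truncation at $\sigma\sqrt{C\log n}$ does not repair it: the discarded second moment per coordinate is of order $\sigma^2(\log n)^{-\epsilon/2}$, so summed over the $\asymp\sum_j\log_2(a_j-a_{j-1})$ admissible $(j,\ell)$ pairs it is of order $\sigma^2k(\log n)^{1-\epsilon/2}$, not ``$O(\sigma^2)$ overall''; moreover, for small $\ell$ Bernstein with a $\sqrt{\log n}$-bounded summand only yields a tail like $\exp\bigl(-c\sqrt{u}\,2^{\ell/2}/\sqrt{\log n}\bigr)$ at the relevant levels $u\asymp\log\log$, not $e^{-cu}$. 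The paper's proof hinges on the scale-dependent two-layer truncation from Lemma \ref{lem:er12} ($|Z_i|$ truncated at $\sigma2^{\ell/2}/\sqrt{\ell}$, then the resulting maximal statistic capped at $\sigma^2\ell$, cf.\ \eqref{eq:bernstein} and \eqref{eq:useb}), which produces summands with uniformly bounded exponential moments and sends the complements into the expectation-controlled part; without this, your exceedance counts $N(u)$ have no usable tails and the layer-cake integral cannot be closed.

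The second gap is the step you explicitly defer, which is the heart of the lemma: a bound uniform in the random, data-dependent $\wh k$ that produces exactly $\wh k\log\log(16n/\wh k)$ with an $e^{-C't}$ tail. The paper handles this by a Chernoff bound combined with a union over the number $L\le\wt k=\min\{\wh k,m\}$ of active blocks and over all $\binom{m}{L}$ configurations, where the combinatorial factor is absorbed by a penalty threshold of order $\sigma^2\wt k\log(em/\wt k)$ (see \eqref{eq:yulu-r} and its analogue for \eqref{eq:indep2-r}), and then converts $\wt k\log(em/\wt k)$ into $k\log\log(16n/k)+\wh k\log\log(16n/\wh k)$ by a purely deterministic three-case argument. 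Your order-statistics route avoids the union over configurations, but it replaces it with an unproven uniform control over all possible values $M$ of $2\wh k$ with thresholds $u_M$ depending on that random $M$, plus per-piece sub-exponential bounds for the grouped excesses $\sum_\ell(\xi_\pm-u_M)_+$ needed to run Bernstein across pieces; since $k\log\log(16n/k)+\wh k\log\log(16n/\wh k)$ can be far below $\log n$, a crude union over $M\in[n]$ cannot be absorbed. As written, the proposal is a plausible blueprint whose two load-bearing steps --- the truncation yielding uniform exponential tails under \eqref{eq:moment}, and the calibration uniform in $\wh k$ --- are exactly the parts left unproved, and they are what the paper's proof actually supplies.
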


By Lemma \ref{lem:er12-all-k-2-part}, we obtain the bound
\begin{eqnarray}
\nonumber && \|\wh{\theta}-\theta^*\|^2 + 2\wt{\text{pen}}_{\tau}(\wh{k}) \\
\label{eq:the-R-bound} &\leq& C\|\theta^{(k)}-\theta^*\|^2 + R_1 + R_2 + 2\wt{\text{pen}}_{\tau}(k),
\end{eqnarray}
where $\mathbb{E}R_1\leq C\sigma^2k\log\log(16n/k)$ and $R_2=\wt{R}_2$ satisfies (\ref{eq:R-2-high-prob}).

Now we derive an alternative bound. Starting from (\ref{eq:basic-k-piece-pen}), it is sufficient to bound $2\iprod{X-\theta^*}{\wh{\theta}-\theta^{(k)}}$. Using the same argument in the proof of Theorem \ref{thm:upper2}, it can be bounded by the sum of (\ref{eq:the-hardest-4th-term}) and (\ref{eq:bound-first-three-terms}). Here, we give an alternative bound for (\ref{eq:the-hardest-4th-term}). By Cauchy-Schwarz, it can be bounded as
$$\eta\|\wh{\theta}-\theta^{(k)}\|^2+\eta^{-1}\sum_{j=1}^k\sum_{\{h:(\wh{a}_{h-1}:\wh{a}_h]\subset(a_{j-1}:a_j]\}}(\wh{a}_h-\wh{a}_{h-1})\wb{Z}_{(\wh{a}_{h-1}:\wh{a}_h]}^2.$$
Since $\wh{A}_{\wh{k}}\subset\wh{A}_n$, the second term above is bounded by $\eta^{-1}\|\wh{\theta}^{(n)}-\theta^*\|^2$, where $\wh{\theta}^{(n)}=\wh{\theta}(\Theta_n^{\uparrow})$. The risk $\mathbb{E}\|\wh{\theta}^{(n)}-\theta^*\|^2$ is bounded in the following lemma.
\begin{lemma}\label{lem:isotonic-bound}
Define $\wb{l}(m)=\min\Big\{n,3m + m\sqrt{m+1}\Big(\wb{\theta}^*_{[n-m:n-m/2)}-\wb{\theta}^*_{(1+m/2:1+m]}\Big)/\sigma\Big\}$ for all $m\leq n/3$, $\wb{l}(m)=n$ for all $m>n/3$ and $\hat{l}(m)=\min\big\{n,3m + m\sqrt{m+1}\big(\wb{X}_{[n-m:n-m/2)}$ $-\wb{X}_{(1+m/2:1+m]}\big)/\sigma\big\}$. Then, there exist constants $C_1'$ and $C_2'$, such that for any $t>0$,
$$\mathbb{P}\Big(\|\wh{\theta}^{(n)}-\theta^*\|^2>C_1'(1+t)\sigma^2\sum_{\ell\geq 0}\frac{\bar{l}(2^{\ell+1})-\bar{l}(2^{\ell})}{2^{\ell+1}}\Big)\leq C_2'\Big(\frac{1}{1+t}\Big)^{1+\epsilon/2}$$
and
$$\mathbb{P}\Big(\Big|\sum_{\ell\geq 0:2^{\ell}\leq n/3}\frac{\wh{l}(2^{\ell+1})-\wb{l}(2^{\ell+1})}{2^{\ell+1}}\Big|> C_1'(1+t)\log (en)\Big)\leq C_2'\Big(\frac{1}{1+t}\Big)^2.$$
Moreover, we also have
$$\sigma^2\sum_{\ell\geq 0}\frac{\bar{l}(2^{\ell+1})-\bar{l}(2^{\ell})}{2^{\ell+1}}\leq C_1'\sigma^2\min\Big\{n,\log(en)+n^{{1/3}}\Big(\frac{V}{\sigma}\Big)^{2/3}\Big\}.$$
\end{lemma}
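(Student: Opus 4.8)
The lemma packages three claims: (i) a high-probability upper bound for $\|\wh\theta^{(n)}-\theta^*\|^2$ by the deterministic proxy $\sigma^2\sum_{\ell\ge0}(\bar l(2^{\ell+1})-\bar l(2^\ell))/2^{\ell+1}$; (ii) a concentration estimate of the data-driven $\wh l(m)$ around its population version $\bar l(m)$; and (iii) a deterministic bound on the proxy. I would handle them in the order (iii), (i), (ii). Part (iii) is arithmetic: since $\bar l(m)\ge0$, $\sum_{\ell\ge0}(\bar l(2^{\ell+1})-\bar l(2^\ell))/2^{\ell+1}\le\sum_{\ell\ge0}\bar l(2^{\ell+1})/2^{\ell+1}$, which is $\le n$ because $\bar l(m)\le n$. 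For the sharper bound I would use $\bar l(m)\le 3m+m\sqrt{m+1}\,V/\sigma$ (valid since $\wb{\theta}^*_{[n-m:n-m/2)}-\wb{\theta}^*_{(1+m/2:1+m]}\le\theta^*_n-\theta^*_1=V$), so that $\bar l(2^{\ell+1})/2^{\ell+1}\le\min\{n/2^{\ell+1},\,3+\sqrt{2^{\ell+1}+1}\,V/\sigma\}$, and split the geometric sum at the crossover scale $2^{\ell+1}\asymp m_\star:=(n\sigma/V)^{2/3}$: the scales $2^{\ell+1}\le m_\star$ contribute $\lesssim\log(en)+\sqrt{m_\star}\,V/\sigma=\log(en)+n^{1/3}(V/\sigma)^{2/3}$ and the scales $2^{\ell+1}>m_\star$ contribute $\lesssim n/m_\star=n^{1/3}(V/\sigma)^{2/3}$; the degenerate regimes (where $V/\sigma$ is tiny or $\gtrsim n$) are covered by the competing bound $\le n$.

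\textbf{The risk bound (i).} This is a tail-strengthening of the sharp isotonic risk bound of \cite{zhang2002risk}, namely $\mathbb{E}\|\wh\theta^{(n)}-\theta^*\|^2\lesssim\sigma^2(\log(en)+n^{1/3}(V/\sigma)^{2/3})$, and I expect it to be the main obstacle. I would retrace the argument of \cite{zhang2002risk}: using the representation $\wh\theta^{(n)}_i=\max_{s\le i}\min_{t\ge i}\wb{X}_{[s:t]}$ and Theorem~2.1 of \cite{zhang2002risk} on constant stretches, one decomposes $\|\wh\theta^{(n)}-\theta^*\|^2$ into contributions organized by the dyadic increments $\bar l(2^{\ell+1})-\bar l(2^\ell)$ --- this is exactly the bookkeeping role of $\bar l$ --- where the contribution at scale $m=2^\ell$ is $\sigma^2$ times a sum of normalized maxima of partial-sum averages of $Z$ over windows of length $\asymp m$. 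Instead of taking expectations as in \cite{zhang2002risk}, I would bound the $(1+\epsilon/2)$-th moment of each such noise maximum by combining the Doob and L\'evy maximal inequalities with a Marcinkiewicz--Zygmund inequality, which under $\max_i\mathbb{E}|Z_i/\sigma|^{2+\epsilon}\le C_1$ gives a per-block tail $\mathbb{P}(\cdot>u\sigma^2)\lesssim u^{-(1+\epsilon/2)}$; aggregating over blocks and scales and a final Markov step then produces the tail $C_2'(1+t)^{-(1+\epsilon/2)}$ around the proxy. The delicate parts are reproducing the \cite{zhang2002risk} accounting faithfully and verifying that the number of active blocks at scale $2^\ell$ is controlled by $(\bar l(2^{\ell+1})-\bar l(2^\ell))/2^{\ell+1}$.

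\textbf{The concentration (ii) and conclusion.} Since $x\mapsto\min\{n,x\}$ is $1$-Lipschitz, $|\wh l(m)-\bar l(m)|\le m\sqrt{m+1}\,|\wb{Z}_{[n-m:n-m/2)}-\wb{Z}_{(1+m/2:1+m]}|/\sigma$, so $|(\wh l(2^{\ell+1})-\bar l(2^{\ell+1}))/2^{\ell+1}|\le\sqrt{2^{\ell+1}+1}\,|\wb{Z}_{[n-2^{\ell+1}:n-2^\ell)}-\wb{Z}_{(1+2^\ell:1+2^{\ell+1}]}|/\sigma$. A Marcinkiewicz--Zygmund inequality at exponent $2+\epsilon$ gives $\mathbb{E}|\wb{Z}_J/\sigma|^{2+\epsilon}\lesssim|J|^{-(1+\epsilon/2)}$ for any index window $J$, so each $\ell$-term above has $\mathbb{E}|\cdot|^{2+\epsilon}=O(1)$ uniformly in $\ell$; Minkowski's inequality over the $\lesssim\log(en)$ active scales ($2^\ell\le n/3$) gives $\big\|\sum_\ell(\wh l(2^{\ell+1})-\bar l(2^{\ell+1}))/2^{\ell+1}\big\|_{2+\epsilon}\lesssim\log(en)$, whence Markov at exponent $2+\epsilon\ge2$ yields the claimed $C_2'(1+t)^{-2}$ bound. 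Finally, combining (i)--(iii) --- using (iii) to replace the proxy in (i) by $\sigma^2(\log(en)+n^{1/3}(V/\sigma)^{2/3})$ --- gives the lemma.
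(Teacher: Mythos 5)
Your parts (ii) and (iii) are fine and essentially reproduce the paper's own argument: for (ii) the paper uses the same Lipschitz reduction $|\wh l(m)-\wb l(m)|\le m\sqrt{m+1}\,|\wb Z_{[n-m:n-m/2)}-\wb Z_{(1+m/2:1+m]}|/\sigma$ followed by a Chebyshev bound over the $O(\log(en))$ dyadic scales (second moments suffice for the stated $(1+t)^{-2}$ tail), and for (iii) it splits $\wb l\le \wb l_1+\wb l_2$ with $\wb l_1(m)=\min\{n,3m\}$, $\wb l_2(m)=\min\{n,m\sqrt{m+1}V/\sigma\}$ and truncates each dyadic sum where the piece saturates at $n$, which is the same crossover computation you describe.

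The gap is in part (i), and it is exactly the step you label ``delicate'' and defer. The probabilistic ingredients you list (Doob/L\'evy maximal inequalities, a Marcinkiewicz--Zygmund moment bound, Markov at exponent $1+\epsilon/2$) do match the paper's, but in the paper's proof they only yield a tail bound around the proxy $\sum_{j=1}^n\sigma^2/(m_j+1)$, where $m_j=\max\{m\ge0:\wb\theta^*_{[j:j+m]}-\theta^*_j\le v(m),\ j+m\le n\}$ with $v(m)=\sigma/\sqrt{m+1}$, coming from the decomposition $\sum_j(\wh\theta^{(n)}_j-\theta^*_j)_+^2\le 2\sum_j v(m_j)^2+2\sum_j\bigl(\min_{j\le l\le j+m_j}\max_{k\le j}\wb Z_{[k:l]}\bigr)_+^2$ (and symmetrically for the negative part). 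The bridge from this proxy to the quantity appearing in the lemma is a purely deterministic counting argument that your sketch presupposes rather than proves: setting $l(m)=|\{j:m_j<m\}|$, one has $\sum_j\sigma^2/(m_j+1)\le\sigma^2\sum_{\ell\ge0}(l(2^{\ell+1})-l(2^\ell))/(2^\ell+1)$, and then the key inequality $l(m)\le 3m+\sum_{j=m+1}^{n-2m-1}\bigl(\wb\theta^*_{[j:j+m+1]}-\theta^*_j\bigr)/v(m)\le 3m+m\sqrt{m+1}\,(\theta^*_{n-m}-\theta^*_{1+m})/\sigma\le \wb l(m)$, where the last step uses monotonicity to replace endpoint values by the trimmed half-window averages. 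This chain (Markov-type counting of indices violating the $v(m)$ threshold, telescoping of $\theta^*_{j+m+1}-\theta^*_j$, and the passage to $\wb\theta^*_{[n-m:n-m/2)}-\wb\theta^*_{(1+m/2:1+m]}$, which is what makes $\wh l$ in part (ii) estimable) is the substantive content of the first display and is what explains the specific $3m+m\sqrt{m+1}(\cdot)/\sigma$ form of $\wb l$; asserting that the decomposition is ``organized by the dyadic increments $\wb l(2^{\ell+1})-\wb l(2^\ell)$'' assumes this conclusion. Without supplying that counting bound, claim (i) of the lemma is not established by your proposal.
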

To summarize, we have
\begin{eqnarray}\label{eq:the-L-bound}
  \|\wh{\theta}-\theta^*\|^2 + 2\wt{\text{pen}}_{\tau}(\wh{k}) 
 \leq C\|\theta^{(k)}-\theta^*\|^2 + L_1 + L_2 + 2\wt{\text{pen}}_{\tau}(k),
\end{eqnarray}
where $L_1$ corresponds to the last three terms in \eqref{eq:bound-first-three-terms} satisfying $\mathbb{E}L_1\leq C\sigma^2k\log\log(16n/k)$ and $L_2=C\|\wh{\theta}-\theta^*\|^2$ is bounded by Lemma \ref{lem:isotonic-bound}.

Combining the two bounds (\ref{eq:the-R-bound}) and (\ref{eq:the-L-bound}), we get
\begin{eqnarray*}
&& \|\wh{\theta}-\theta^*\|^2 + 2\wt{\text{pen}}_{\tau}(\wh{k}) \\
&\leq& C\|\theta^{(k)}-\theta^*\|^2 + \min\left\{L_1 + L_2, R_1+R_2\right\} + 2\wt{\text{pen}}_{\tau}(k) \\
&\leq& C\|\theta^{(k)}-\theta^*\|^2 + L_1+R_1 + \min\left\{L_2, R_2\right\} + 2\wt{\text{pen}}_{\tau}(k).
\end{eqnarray*}
Since $\mathbb{E}(L_1+R_1)\lesssim \sigma^2k\log\log(16n/k)$, it is sufficient to give a bound for $\mathbb{E}\min\{L_2,R_2\}$. For this, we have 
\begin{align*}
& \mathbb{P}\Big(\min\{L_2,R_2\}> (1+t)\min\Big\{C_1'\sigma^2\sum_{\ell\geq 0}\frac{\bar{l}(2^{\ell+1})-\bar{l}(2^{\ell})}{2^{\ell+1}},R_2\Big\}\Big) \\
\leq& \mathbb{P}\Big(\min\{L_2,R_2\}> \min\Big\{C_1'(1+t)\sigma^2\sum_{\ell\geq 0}\frac{\bar{l}(2^{\ell+1})-\bar{l}(2^{\ell})}{2^{\ell+1}},R_2\Big\}\Big) \\
\leq& \mathbb{P}\Big(L_2>C_1'(1+t)\sigma^2\sum_{\ell\geq 0}\frac{\bar{l}(2^{\ell+1})-\bar{l}(2^{\ell})}{2^{\ell+1}}\Big) \\
\leq& C_2'\Big(\frac{1}{1+t}\Big)^{1+\epsilon/2},
\end{align*}
where the second inequality is by separately studying the cases $L_2\geq R_2$ and $L_2<R_2$, and the last inequality is by Lemma  \ref{lem:isotonic-bound}. Integrating the probability tail over $t>0$, we have
$$\mathbb{E}\min\{L_2,R_2\}\lesssim \mathbb{E}\min\Big\{C_1'\sigma^2\sum_{\ell\geq 0}\frac{\bar{l}(2^{\ell+1})-\bar{l}(2^{\ell})}{2^{\ell+1}},R_2\Big\}.$$
Now using Lemma \ref{lem:er12-all-k-2-part}, we get
\begin{align*}
& \mathbb{P}\Big(\min\Big\{C_1'\sigma^2\sum_{\ell\geq 0}\frac{\bar{l}(2^{\ell+1})-\bar{l}(2^{\ell})}{2^{\ell+1}},R_2\Big\}>  \min\Big\{C_1'\sigma^2\sum_{\ell\geq 0}\frac{\bar{l}(2^{\ell+1})-\bar{l}(2^{\ell})}{2^{\ell+1}},\\
&~~~~~C_3'\sigma^2\Big(k\log\log(16n/k) + \wh{k}\log\log(16n/\wh{k})\Big)\Big\}+C_3'\sigma^2 t\Big) \\
\leq& \mathbb{P}\Big(\min\Big\{C_1'\sigma^2\sum_{\ell\geq 0}\frac{\bar{l}(2^{\ell+1})-\bar{l}(2^{\ell})}{2^{\ell+1}},R_2\Big\}> \min\Big\{C_1'\sigma^2\sum_{\ell\geq 0}\frac{\bar{l}(2^{\ell+1})-\bar{l}(2^{\ell})}{2^{\ell+1}},\\
&~~~~~C_3'\sigma^2\Big(k\log\log(16n/k) + \wh{k}\log\log(16n/\wh{k})+t\Big)\Big\}\Big) \\
\leq& \mathbb{P}\Big\{R_2> C_3'\sigma^2\Big(k\log\log(16n/k) + \wh{k}\log\log(16n/\wh{k})+t\Big)\Big\} \\
\leq& \exp\Big(-C_4't\Big).
\end{align*}
Again, integrating the above probability tail bound over $t>0$, we have
\begin{align*}
& \mathbb{E}\min\Big\{C_1'\sigma^2\sum_{\ell\geq 0}\frac{\bar{l}(2^{\ell+1})-\bar{l}(2^{\ell})}{2^{\ell+1}},R_2\Big\} \\
\lesssim& \mathbb{E}\min\Big\{C_1'\sigma^2\sum_{\ell\geq 0}\frac{\bar{l}(2^{\ell+1})-\bar{l}(2^{\ell})}{2^{\ell+1}},C_3'\sigma^2\Big(k\log\log(16n/k) \!+\! \wh{k}\log\log(16n/\wh{k})\Big)\Big\} \\
\notag&+ \sigma^2 k.
\end{align*}
By noticing that 
$\sum_{\ell\geq0:2^\ell>n/3}\frac{\bar{l}(2^{\ell+1})-\bar{l}(2^{\ell})}{2^{\ell+1}}=0,$
we have the bound
\begin{align*}
&\mathbb{E}\min\left\{L_1 + L_2, R_1+R_2\right\} \\
\lesssim& \sigma^2k\log\log(16n/k) \!+\! \mathbb{E}\min\Big\{\sigma^2\!\!\!\!\!\!\sum_{\ell\geq 0:2^{\ell}\leq n/3}\!\!\!\frac{\bar{l}(2^{\ell+1})-\bar{l}(2^{\ell})}{2^{\ell+1}},\sigma^2\wh{k}\log\log(16n/\wh{k})\Big\} \\
\leq& \sigma^2k\log\log(16n/k) \!+\! \mathbb{E}\min\Big\{\sigma^2\!\!\!\!\!\!\sum_{\ell\geq 0:2^{\ell}\leq n/3}\!\!\!\frac{\wh{l}(2^{\ell+1})-\wh{l}(2^{\ell})}{2^{\ell+1}},\sigma^2\wh{k}\log\log(16n/\wh{k})\Big\} \\
& + \mathbb{E}\min\Big\{\sigma^2\left|\sum_{\ell\geq 0:2^{\ell}\leq n/3}\frac{\wh{l}(2^{\ell+1})-\wb{l}(2^{\ell+1})}{2^{\ell+1}}\right|,\sigma^2\wh{k}\log\log(16n/\wh{k})\Big\}.
\end{align*}
By Lemma \ref{lem:isotonic-bound}, we have
\begin{eqnarray*}
&& \mathbb{P}\Big(\min\Big\{\sigma^2\left|\sum_{\ell\geq 0:2^{\ell}\leq n/3}\frac{\wh{l}(2^{\ell+1})-\wb{l}(2^{\ell+1})}{2^{\ell+1}}\right|,\sigma^2\wh{k}\log\log(16n/\wh{k})\Big\} > \\
&&\quad\quad  (1+t)\min\Big\{C_1'\sigma^2\log (en),\sigma^2\wh{k}\log\log(16n/\wh{k})\Big\}\Big) \\
&\leq& \mathbb{P}\Big(\min\Big\{\sigma^2\left|\sum_{\ell\geq 0:2^{\ell}\leq n/3}\frac{\wh{l}(2^{\ell+1})-\wb{l}(2^{\ell+1})}{2^{\ell+1}}\right|,\sigma^2\wh{k}\log\log(16n/\wh{k})\Big\} > \\
&&\quad\quad  \min\Big\{C_1'(1+t)\sigma^2\log (en),\sigma^2\wh{k}\log\log(16n/\wh{k})\Big\}\Big) \\
&\leq& \mathbb{P}\Big(\left|\sum_{\ell\geq 0:2^{\ell}\leq n/3}\frac{\wh{l}(2^{\ell+1})-\wb{l}(2^{\ell+1})}{2^{\ell+1}}\right|> C_1'(1+t)\log (en)\Big) \\
&\leq& C_2'\Big(\frac{1}{1+t}\Big)^2.
\end{eqnarray*}
Integrating the probability tail bound over $t>0$, we have
\begin{align*}
&\mathbb{E}\min\Big\{\sigma^2\Big|\sum_{\ell\geq 0:2^{\ell}\leq n/3}\frac{\wh{l}(2^{\ell+1})-\wb{l}(2^{\ell+1})}{2^{\ell+1}}\Big|,\sigma^2\wh{k}\log\log(16n/\wh{k})\Big\}\\
\lesssim& \mathbb{E}\min\Big\{\sigma^2\log(en),\sigma^2\wh{k}\log\log(16n/\wh{k})\Big\}.
\end{align*}
Therefore, we obtain the bound
\begin{align*}
&\mathbb{E}\min\left\{L_1 + L_2, R_1+R_2\right\}\lesssim \sigma^2k\log\log(16n/k)+\\
&\quad\quad\quad\quad\mathbb{E}\min\Big\{\sigma^2\sum_{\ell\geq 0:2^{\ell}\leq n/3}\frac{\wh{l}(2^{\ell+1})-\wh{l}(2^{\ell})}{2^{\ell+1}}+\sigma^2\log(en),\sigma^2\wh{k}\log\log(16n/\wh{k})\Big\},
\end{align*}
which is bounded by $\sigma^2+\wt{\text{pen}}_{\tau}(k)+\wt{\text{pen}}_{\tau}(\wh{k})$ up to a constant if we choose $\tau=C_1\sigma^2$ for some large constant $C_1>0$. Therefore, for some (possibly different) universal constant $C>3$,
we have
$$\mathbb{E}\|\wh{\theta}-\theta^*\|^2+2\mathbb{E}\wt{\text{pen}}_{\tau}(\wh{k}) \leq C\|\theta^{(k)}-\theta^*\|^2+2\wt{\text{pen}}_{\tau}(k)+C\left(\mathbb{E}\wt{\text{pen}}_{\tau}(\wh{k})+\wt{\text{pen}}_{\tau}(k)\right),$$
which implies $\mathbb{E}\|\wh{\theta}-\theta^*\|^2\lesssim \|\theta^{(k)}-\theta^*\|^2+\wt{\text{pen}}_{\tau}(k)$, the desired conclusion for all $2\leq k\leq n-1$.

{{\bf Step 2}.} For $k\in [n]$, we observe that (\ref{eq:wh-k-def}) is equivalent to $\wh{k}=\argmin_k\{\|\wh{\theta}(\Theta_k^{\uparrow})-\wh{\theta}^{(n)}\|^2+\wt{\text{pen}}_{\tau}(k)\}$, which implies $\|\wh{\theta}-\wh{\theta}^{(n)}\|^2\leq \wt{\text{pen}}_{\tau}(n)$. Therefore, $$\mathbb{E}\|\wh{\theta}-\theta^*\|^2\leq 2\mathbb{E}\wt{\text{pen}}_{\tau}(n)+2\mathbb{E}\|\wh{\theta}^{(n)}-\theta^*\|^2.$$ The desired bound is thus implied by Lemma \ref{lem:isotonic-bound}.

{\bf Step 3}.  This is similar to the argument that leads to (\ref{eq:pf-k=1-later}) in the proof of Theorem \ref{thm:adaptive-oracle}. So we omit the details here.
\end{proof}

\begin{proof}[Proof of Theorem \ref{thm:upper1} (upper bound)]
Consider the estimator $\wh{\theta}=\argmin_{\theta\in\Theta_k}\|X-\theta\|^2$. The observation $X$ follows $N(\theta^*,\sigma^2I_n)$ with some $\theta^*\in\Theta_{k}$. The conclusion for $k=1$ is obvious. When $k\geq 3$, the risk bound $\sigma^2k\log(en/k)$ has been derived in the literature \citep{birge2001gaussian,boysen2009consistencies,li2016fdr}.
The bound $\sigma^2\log\log(16n)$ for $k=2$ follows the same argument in proving Theorem \ref{thm:upper2} because $\theta^*$ is monotone in this case.
\end{proof}

\begin{proof}[Proof of Theorem \ref{thm:unimodal} (upper bound)]
Since $\Theta_k^{\wedge}=\Theta_k$ for $k=1,2$, we only need to prove the upper bound for $k\geq 3$.
We construct an estimator using the aggregation strategy in \cite{leung2006information}. Using $X\sim N(\theta^*,\sigma^2I_n)$,  we construct two i.i.d. vectors $U=X+W$ and $V=X-W$, where $X\sim N(0,\sigma^2I_n)$ is independent of $X$. Then, it is easy to see that $U,V\sim N(\theta^*,2\sigma^2I_n)$ and are independent from each other.

We first use $U$ to construct some estimators. For any $k\geq 3$, define
$$\Omega_k=\Big\{(u,v,\ell)\in\mathbb{Z}^3:0\leq u,v,\ell\leq n, u+v=k, u\leq \ell, v\leq n-\ell\Big\}.$$
For any $(u,v,\ell)\in\Omega_k$, we construct an estimator that is unimodal with the mode at $\ell$ and has at most $u$ and $v$ steps to the left and to the right of $\ell$, respectively. We use $\Theta_{(k,m)}^{\uparrow}$ and $\Theta_{(k,m)}^{\downarrow}$ to denote non-decreasing and non-increasing vectors in $\mathbb{R}^{m}$ that have at most $k$ steps. In particular, the space $\Theta_k^{\uparrow}$ can be written as $\Theta_{(k,n)}^{\uparrow}$. We define $\wh{\theta}_{(u,v,\ell)}$ to be the concatenation of vectors $\argmin_{\eta\in\Theta_{(u,\ell)}^{\uparrow}}\|U_{(0:\ell]}-\eta\|^2$ and $\argmin_{\eta\in\Theta_{(v,n-\ell)}^{\downarrow}}\|U_{(\ell:n]}-\eta\|^2$. Then, using the results of Theorem \ref{thm:upper2}, we have
\begin{eqnarray*}
\mathbb{E}\|\wh{\theta}_{(u,v,\ell)}-\theta^*\|^2 &\leq& C\sigma^2\Big(u\log\log(16\ell/u)+v\log\log(16(n-\ell)/v)\Big) \\
&\leq& C\sigma^2\Big(u\log\log(16n/u)+(k-u)\log\log(16n/(k-u))\Big) \\
&\leq& 2C\sigma^2 k\log\log(16n/k),
\end{eqnarray*}
uniformly over $\Theta_{(u,v,\ell)}$. The space $\Theta_{(u,v,\ell)}$ is defined to be the class of all $\theta$'s in $\Theta_k^{\wedge}$ such that the mode of $\theta$ is at $\ell$ and it has at most $u$ and $v$ steps to the left and to the right of $\ell$, respectively. It is easy to see that $\Theta_k^{\wedge}=\cup_{(u,v,\ell)\in\Omega_k}\Theta_{(u,v,\ell)}$.

We then use $V$ to aggregate all $\{\wh{\theta}_{(u,v,\ell)}\}$. Define the probability simplex on $\Omega_k$ by $\Lambda^{\Omega_k}=\Big\{\{\lambda_{\omega}\}_{\omega\in\Omega_k}:\lambda_{\omega}\geq 0, \sum_{\omega\in\Omega_k}=1\Big\}$. The vector $\pi\in\Lambda^{\Omega_k}$ is defined as $\pi_{\omega}=|\Omega_k|^{-1}$ for all $\omega\in\Omega_k$. Define
$$\wh{\lambda}^V=\argmin_{\lambda\in\Lambda^{\Omega_k}}\Big\{\sum_{\omega\in\Omega_k}\lambda_{\omega}\|V-\wh{\theta}_{\omega}^U\|^2+8\sigma^2D(\lambda\|\pi)\Big\}.$$
Our final aggregated estimator is $\wh{\theta}=\sum_{\omega\in\Omega_k}\wh{\lambda}^V_{\omega}\wh{\theta}^U_{\omega}$. The result of \cite{leung2006information} states that
$$
\mathbb{E}\norm{\wh{\theta}-\theta^*}^2\leq \min_{\omega\in\Omega_k}\Bigl\{\mathbb{E}\norm{\wh{\theta}_{\omega}^U-\theta^*}^2+8\sigma^2\log(1/\pi_{\omega})\Bigr\}.
$$
Therefore,
\begin{eqnarray*}
\sup_{\theta^*\in\Theta_k^{\wedge}}\mathbb{E}\|\wh{\theta}-\theta^*\|^2 &=& \max_{(u,v,\ell)\in\Omega_k}\sup_{\theta\in\Theta_{u,v,\ell}}\mathbb{E}\|\wh{\theta}-\theta^*\|^2 \\
&\leq& \max_{(u,v,\ell)\in\Omega_k}\sup_{\theta\in\Theta_{u,v,\ell}}\mathbb{E}\|\wh{\theta}_{(u,v,\ell)}^U-\theta^*\|^2 + 8\sigma^2\log|\Omega_k| \\
&\leq& 2C\sigma^2 k\log\log(16n/k) + 16\sigma^2\log(n+1),
\end{eqnarray*}
where the last inequality is by $|\Omega_k|\leq (n+1)^2$. Therefore, we obtain the desired upper bound for $k\geq 3$, and the proof is complete.
\end{proof}

\begin{proof}[Proof of Proposition \ref{prop:lp-risk} (upper bound)]
The upper bound is a direct implication of Theorem 2.1 in \cite{zhang2002risk}. 
\end{proof}

\begin{proof}[Proof of Proposition \ref{prop:lp-reduced} (upper bound)]
Let's denote $\hat\theta=\hat\theta(\Theta_k^\uparrow)$. We then have
\[
\E\norm{\hat\theta-\theta^*}_p^p=\sum_{i=1}^n\E|\hat\theta_i-\theta^*_i|^p\leq \sum_{i=1}^n(\E|\hat\theta_i-\theta^*_i|^2)^{p/2}\leq n^{1-p/2}\Big\{\sum_{i=1}^n(\E|\hat\theta_i-\theta^*_i|^2)\Big\}^{p/2}.
\]
Using the previous bound on $\sup_{\theta^*\in\Theta_k^\uparrow}\E\norm{\hat\theta-\theta^*}^2$ finishes the proof.
\end{proof}

\section{Proofs of remaining lower bounds}

\begin{proof}[Proof of Theorem \ref{thm:upper1} (lower bound)]
When $k=1$, the lower bound is trivial. When $k=2$, we have
$$\inf_{\wh{\theta}}\sup_{\theta\in\Theta_2}\mathbb{E}\norm{\wh{\theta}-\theta}^2\geq \inf_{\wh{\theta}}\sup_{\theta\in\Theta_2^{\uparrow}}\mathbb{E}\norm{\wh{\theta}-\theta}^2\geq c\log\log 16n.
$$
When $k\geq 3$, the problem is reduced to finding the minimax lower bound for a sparse normal mean estimation problem. Define the space of sparse vectors
\begin{equation}
S_\ell=\Bigl\{\theta\in\mathbb{R}^n: \sum_{i=1}^n\ind\{\theta_i\neq 0\}\leq \ell\Bigr\}.\label{eq:sparse-class}
\end{equation}
Then, we observe that $S_{\floor{\frac{k-1}{2}}}\subset\Theta_k$. This leads to the argument
\begin{align*}
\inf_{\wh{\theta}}\sup_{\theta\in\Theta_k}\norm{\wh{\theta}-\theta}^2\geq \inf_{\wh{\theta}}\sup_{\theta\in S_{\floor{\frac{k-1}{2}}}}\norm{\wh{\theta}-\theta}^2\geq C_1k\log(en/k),
\end{align*}
where the last inequality above is given by \cite{donoho1994minimax}. The proof is complete.
\end{proof}

\begin{proof}[Proof of Theorem \ref{thm:unimodal} (lower bound)]
When $k\leq 2$, $\Theta_{k}^{\wedge}=\Theta_k$. Thus, the results are the same as those for $\Theta_k$. For $k\geq 3$, we have
$$\inf_{\wh{\theta}}\sup_{\theta\in\Theta_k^{\wedge}}\mathbb{E}\|\wh{\theta}-\theta\|^2\geq \inf_{\wh{\theta}}\sup_{\theta\in\Theta_k^{\uparrow}}\mathbb{E}\|\wh{\theta}-\theta\|^2\geq c\sigma^2k\log\log(16n/k),$$
and
$$\inf_{\wh{\theta}}\sup_{\theta\in\Theta_k^{\wedge}}\mathbb{E}\|\wh{\theta}-\theta\|^2\geq \inf_{\wh{\theta}}\sup_{\theta\in\Theta_3^{\wedge}}\mathbb{E}\|\wh{\theta}-\theta\|^2\geq \inf_{\wh{\theta}}\sup_{\theta\in S_1}\mathbb{E}\|\wh{\theta}-\theta\|^2\geq c\sigma^2\log(en),$$
where $S_1$ is defined in (\ref{eq:sparse-class}). Therefore,
$$\inf_{\wh{\theta}}\sup_{\theta\in\Theta_k^{\wedge}}\mathbb{E}\|\wh{\theta}-\theta\|^2\geq c\sigma^2\max\Big\{k\log\log(16n/k),\log(en)\Big\},$$
which leads to the desired results for $k\geq 3$.
\end{proof}

We then give the proof of Proposition \ref{prop:imp}. This requires the following result to bound the Kullback-Leibler divergence.

\begin{lemma}\label{lem:KL-gamma}
Consider the density function $p_{\gamma,a}(x)\propto\exp\bigl(-|x-a|^{\gamma}\bigr)$ for some $\gamma\in(0,2]$ and $a\in\mathbb{R}$. Then, there exists some universal constant $C>0$, such that
$$D(p_{\gamma,a}||p_{\gamma,b})\leq\begin{cases}
C|a-b|^{\gamma}, & \gamma\in(0,1],\\
C\bigl(|a-b|+|a-b|^{\gamma}\bigr), &\gamma\in(1,2].
\end{cases}$$
\end{lemma}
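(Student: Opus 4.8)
The plan is to reduce the statement to two elementary pointwise inequalities for the map $x\mapsto|x|^\gamma$ after computing $D(p_{\gamma,a}\|p_{\gamma,b})$ in closed form. Since the normalizing constant of $p_{\gamma,a}(x)\propto\exp(-|x-a|^\gamma)$ does not depend on the location $a$, it cancels in the log-ratio, so
\[
D(p_{\gamma,a}\|p_{\gamma,b})=\int p_{\gamma,a}(x)\bigl(|x-b|^\gamma-|x-a|^\gamma\bigr)\,dx.
\]
Substituting $y=x-a$ and writing $t=b-a$, this equals $g(t):=\E_Y\bigl[|Y-t|^\gamma-|Y|^\gamma\bigr]$, where $Y$ has density $p_{\gamma,0}$. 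Thus it suffices to bound $g(t)$ by $C|t|^\gamma$ for $\gamma\in(0,1]$ and by $C(|t|+|t|^\gamma)$ for $\gamma\in(1,2]$. Here we use that $\E|Y|^s<\infty$ for every $s\ge 0$, since $p_{\gamma,0}$ is a probability density with tails decaying like $e^{-|y|^\gamma}$.

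For $\gamma\in(0,1]$ the map $x\mapsto|x|^\gamma$ is subadditive, so $|y-t|^\gamma\le|y|^\gamma+|t|^\gamma$, i.e. $|y-t|^\gamma-|y|^\gamma\le|t|^\gamma$ for every $y$; integrating against $p_{\gamma,0}$ yields $g(t)\le|t|^\gamma$, which is the claim with $C=1$.

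For $\gamma\in(1,2]$ the function $\phi(x)=|x|^\gamma$ is convex and differentiable with $\phi'(x)=\gamma|x|^{\gamma-1}\mathrm{sgn}(x)$. The supporting-line inequality $\phi(u)-\phi(v)\le\phi'(u)(u-v)$ applied with $u=y-t$, $v=y$ gives
\[
|y-t|^\gamma-|y|^\gamma\le -t\,\gamma|y-t|^{\gamma-1}\mathrm{sgn}(y-t)\le \gamma|t|\,|y-t|^{\gamma-1}.
\]
Since now $\gamma-1\in(0,1]$, subadditivity of $x\mapsto|x|^{\gamma-1}$ gives $|y-t|^{\gamma-1}\le|y|^{\gamma-1}+|t|^{\gamma-1}$, so integrating against $p_{\gamma,0}$ produces $g(t)\le\gamma\,\E|Y|^{\gamma-1}\,|t|+\gamma|t|^\gamma$, which is of the stated form.

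The only point needing a little care — and the closest thing to an obstacle — is that $C$ should be universal rather than $\gamma$-dependent. This is clear because the moment $m_\gamma:=\E_{Y\sim p_{\gamma,0}}|Y|^{\gamma-1}$ is a continuous function of $\gamma$ on $(1,2]$ with $m_\gamma\to 1$ as $\gamma\downarrow 1$, hence bounded on $(1,2]$; one may take $C=2\sup_{1<\gamma\le 2}(\gamma\vee\gamma m_\gamma)$, or simply express $m_\gamma$ via Gamma functions and bound it directly. No delicate analytic estimate enters anywhere: the whole argument rests on the two pointwise inequalities above together with finiteness of low-order moments of $p_{\gamma,0}$.
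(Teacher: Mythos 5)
Your proof is correct and follows essentially the same route as the paper: for $\gamma\in(0,1]$ you use subadditivity of $x\mapsto|x|^{\gamma}$, and for $\gamma\in(1,2]$ you obtain the same pointwise bound $\gamma|t|\bigl(|y|^{\gamma-1}+|t|^{\gamma-1}\bigr)$ that the paper gets via the mean value theorem, merely replacing it with the convex supporting-line inequality. Your explicit check that $\E|Y|^{\gamma-1}$ is bounded uniformly over $\gamma\in(1,2]$ (so that $C$ is universal) is a welcome detail that the paper leaves implicit, but it does not change the argument.
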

The proof of Lemma \ref{lem:KL-gamma} is given in Section \ref{sec:aux}.

\begin{proof}[Proof of Proposition \ref{prop:imp}]
Let $e_j$ be the $j$th canonical vector of $\mathbb{R}^n$. That is, the entries of $e_j$ are all $0$ except that the $j$th entry is $1$. Again, we only consider the $n$ that is large enough. Construct the space $T=\{\alpha(\log n)^{1/\gamma}e_j\}_{j=1}^n$. It is easy to see that $T\subset\Theta_3$. For any $\theta,\theta'\in T$, we have $\norm{\theta-\theta'}^2=2\alpha(\log n)^{2/\gamma}$. Therefore,
$$\log\mathcal{M}\Bigl(\sqrt{2}\alpha(\log n)^{1/\gamma},T,\norm{\cdot}\Bigr)\geq\log n.$$
Moreover, using Lemma \ref{lem:KL-gamma}, we have
$$\max_{\theta,\theta'\in T}D(\mathbb{P}_{\theta}||\mathbb{P}_{\theta'})\leq C_1(\alpha+\alpha^{\gamma})\log n.$$
Using Fano's inequality (\ref{eq:KL}), we have
$$\inf_{\wh{\theta}}\sup_{\theta\in\Theta_3}\mathbb{P}\Bigl\{\norm{\wh{\theta}-\theta}^2\geq 2\alpha^2(\log n)^{2/\gamma}\Bigr\}\geq 1-\frac{C_1(\alpha+\alpha^{\gamma})\log n+\log 2}{\log n}\geq c,$$
as long as we choose a small enough $\alpha$. Thus, with an application of Markov's inequality, the proof is complete.
\end{proof}

\begin{proof}[Proof of Proposition \ref{cor:test}]
Recall the notation $\mathbb{E}_{\theta}$ that stands for the expectation associated with the probability measure $\mathbb{P}_{\theta}=N(\theta,\sigma^2I_n)$. We only consider the case when $n$ is large enough. 
We consider the alternative set of parameters $\cF(\rho)$ that contains vectors $\{\theta_\ell\in\Theta_2; \ell\in[\floor{\log_2 n}]\}$ that fill the last $\ceil{n2^{-\ell}}$ entries with $\rho\sigma\sqrt{2^{\ell}\log\log_2n/n}$ and the rest 0.  Let $\mu_\rho$ be the uniform measure on $\cF(\rho)$ and $\rho$ be some sufficiently small constant. We use the notation $\mathbb{P}_{\mu_{\rho}}=\int\mathbb{P}_{\theta}d\mu_{\rho}$ and $\mathbb{E}_{\mu_{\rho}}$ for its expectation. Using Le Cam's method \citep{yu1997assouad}, we have 
\begin{eqnarray*}
&& \inf_{0\leq \phi\leq 1}\Big\{\sup_{\theta\in\Theta_1}\mathbb{E}_\theta\phi+\sup_{\theta\in\Theta_2(c)}\mathbb{E}_{\theta}(1-\phi)\Big\} \\
&\geq& \inf_{0\leq \phi\leq 1}\Big\{\mathbb{E}_0\phi+\mathbb{E}_{\mu_{\rho}}(1-\phi)\Big\} \\
&\geq& 1-\frac{1}{2}\Big\{\E_{0} L^2_{\mu_{\rho}}(Y)-1\Big\}^{1/2},
\end{eqnarray*}
where we set $L_{\mu_\rho}(y):= \frac{d\P_{\mu_\rho}}{d\P_0}(y).$ 
The rest of this proof shows $\E_{0} L^2_{\mu_{\rho}}(Y)=1+o(1)$ as $n\to 0$. To this end, we calculate
\[
L_{\mu_\rho}(y)=\frac{1}{\floor{\log_2 n}}\sum_{\theta\in\cF(\rho)}\exp\Big( \frac{2\theta^\T y-\norm{\theta}^2}{2\sigma^2} \Big),
\]
yielding
\begin{align*}
\E_{0} L_{\mu_{\rho}}^2(Y)=&\frac{1}{\floor{\log_2 n}^2}\sum_{\theta_1,\theta_2\in\cF(\rho)}\exp\Big(\frac{\theta_1^\T\theta_2}{2\sigma^2}\Big)\\
=&\frac{1}{\floor{\log_2 n}^2} \sum_{j=1}^{\floor{\log_2 n}}\sum_{k=1}^{\floor{\log_2 n}}\exp\Big( \rho^22^{(j+k)/2-1}\log\log_2n/n\cdot \ceil{n2^{-\max(j,k)}}\Big)\\
=& \frac{1}{q^2}\sum_{j=1}^q\sum_{k=1}^q\big(q^{\rho^2}\big)^{2^{-|j-k|/2-1}}(1+o(1)),
\end{align*}
where $q:=\floor{\log_2n}$. We then truncate the array $\{(j,k):1\leq j,k\leq q\}$ to two parts: $T_1:=\{(j,k): |j-k|\leq 2\log_2q\}$ and $T_2:=\{(j,k): |j-k|> 2\log_2q\}$. It is immediate that $|T_1|\asymp (\log_2q)^2$ and $|T_2|= q^2(1+o(1))$. Then
\[
\frac{1}{q^2}\sum_{j=1}^q\sum_{k=1}^q\big(q^{\rho^2}\big)^{2^{-|j-k|/2-1}}= \underbrace{\frac{1}{q^2}\sum_{(j,k)\in T_1}\big(q^{\rho^2}\big)^{2^{-|j-k|/2-1}}}_{A_1}+\underbrace{\frac{1}{q^2}\sum_{(j,k)\in T_2}\big(q^{\rho^2}\big)^{2^{-|j-k|/2-1}}}_{A_2},
\]
with
\[
A_1\leq C\frac{(\log_2 q)^2}{q^2}\cdot q^{\rho^2/2}=o(1),
\]
and each element in $A_2$ satisfying
\[
1\leq \big(q^{\rho^2}\big)^{2^{-|j-k|/2-1}}\leq (q^{\rho^2})^{1/2q}=1+o(1).
\]
This yields $\E_{0} L_{\mu_{\rho}}^2(Y)=1+o(1)$ and hence completes the proof.
\end{proof}

\begin{proof}[Proof of Proposition \ref{prop:lp-risk} (lower bound)]
The lower bound is classic (see, for example, Example 4.2.2 in \cite{lehmann2006theory}).
\end{proof}

\begin{proof}[Proof of Proposition \ref{prop:lp-reduced} (lower bound)]
Without loss of generality, assume $\sigma^2=1$ and $n$ is even. Following the same logic as in the proof of Proposition \ref{prop:lower-LSE}, we only need to study large enough $n$ and can focus on the following specific model of only one change point:
\[
\theta^*_1=\cdots=\theta^*_{n/2}=0~~~{\rm and}~~~\theta^*_{n/2+1}=\cdots=\theta^*_n=\eta:=\sqrt{c\log\log n/n},
\]
for some small enough universal constant $c$. 

We first argue that for the estimated change point $\hat a$ of $\hat\theta$ such that $\hat\theta_{\hat a}\ne \hat\theta_{\hat a+1}$, either $\{\hat a<n/3\}$ or $\{\hat a>2n/3\}$ will have a nonvanishing probability. To this end, notice that $\hat a$ is the one that maximizes
\begin{align*}
\Lambda_a&:=a\bar X_{(0:a]}^2+(n-a)\bar X_{(a:n]}^2\\
&=\begin{cases} 
a\bar\epsilon_{(0:a]}^2+(n-a)\bar\epsilon_{(a:n]}^2+(n-2a)\eta\bar\epsilon_{(a:n]}+\frac{(n-2a)^2}{4(n-a)}\eta^2, & a\leq n/2,
\cr a\bar\epsilon_{(0:a]}^2+(n-a)\bar\epsilon_{(a:n]}^2+(2a-n)\eta\bar\epsilon_{(0:a]}+\frac{(2a-n)^2}{4a}\eta^2, & a>n/2.
\end{cases}
\end{align*}
By Theorem 1.1.2 and Theorem A.3.4 in \cite{csorgo1997limit} (or more explicitly, by combining Equation (1.4.5) and the proof of Theorem 1.6.1 in \cite{csorgo1997limit}), we have
\[
\max_{n/\log n\leq a\leq n-n/\log n}\frac{a\bar\epsilon_{(0:a]}^2+(n-a)\bar\epsilon_{(a:n]}^2}{2\log\log\log n} \stackrel{\P}{\longrightarrow} 1,
\]
which immediately yields
\[
\max_{n/\log n\leq a\leq n-n/\log n}\frac{c\Lambda_a}{4\log\log n} \stackrel{\P}{\longrightarrow} 1
\]
On the other hand, by Theorem 1.3.1 in \cite{csorgo1997limit}, we have
\[
\max_{a\in[n]}\frac{a\bar\epsilon_{(0:a]}^2+(n-a)\bar\epsilon_{(a:n]}^2}{2\log\log n} \stackrel{\P}{\longrightarrow} 1
\]
Accordingly, by forcing $c$ small enough, we have
\[
\lim_{n\to\infty}\Big\{\P(\hat a<n/\log n) + \P(n-\hat a<n/\log n)\Big\} =1.
\]
This proves the assertion.

Following that, without loss of generality we assume the event $\lim_{n\to\infty}\P(\hat a<n/3)>0$. With the convention that summation over an empty set is zero, we then have 
\begin{align*}
\E\norm{\hat\theta-\theta^*}_p^p&\geq \E\sum_{i\in(\hat a,n/2]}|\hat\theta_i-\theta^*_i|^p=\E (n/2-\hat a)_+\Big|\bar\epsilon_{(\hat a,n]}+\sqrt{cn\log\log n/4}/(n-\hat a)\Big|^p\\
&\geq 2^{1-p}c_1(\log\log n/n)^{p/2}\E(n/2-\hat a)_+-\E(n-\hat a)|\bar\epsilon_{(\hat a,n]}|^p\\
&\geq c_2n^{1-p/2}(\log\log n)^{p/2}-\E\sup_{a\in(0,n]}a|\bar\epsilon_{a}^2|\\
&\geq c_3n^{1-p/2}(\log\log n)^{p/2}.
\end{align*}
Here the third inequality uses the result in Step 1 and the last inequality uses Lemma \ref{lem:lp1} given below, whose proof will be given in Section \ref{sec:aux}. The case for a general $k$ follows the same argument used in the proof of Proposition \ref{prop:lower-LSE}.
\end{proof}

\begin{lemma}\label{lem:lp1}
Let $X_1,\ldots,X_n$ be i.i.d. of mean zero, variance one, and denote $S_k=\sum_{i=1}^k X_i$. We then have, for any $1\leq p< 2$. 
\[
\E\Big\{\max_{k\in [n]} k\Big(\frac{|S_k|}{k}\Big)^p\Big\}\lesssim n^{1-p/2}.
\]
\end{lemma}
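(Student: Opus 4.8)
The plan is to reduce the claim, after a dyadic decomposition of the range of $k$, to Doob's $L^2$ maximal inequality together with Jensen's inequality. First I would rewrite the summand as $k(|S_k|/k)^p=|S_k|^p k^{1-p}$, so that the goal becomes $\E\max_{k\in[n]}|S_k|^p k^{1-p}\lesssim n^{1-p/2}$. Since $1-p\le 0$, on each dyadic block $k\in(2^{j-1},2^j]$ the weight $k^{1-p}$ is at most $2^{(1-p)(j-1)}$, whence
\[
\E\max_{2^{j-1}<k\le 2^j}|S_k|^p k^{1-p}\le 2^{(1-p)(j-1)}\,\E\max_{k\le 2^j}|S_k|^p .
\]

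Next I would bound $\E\max_{k\le m}|S_k|^p$ for $m=2^j$. Because $p<2$, the map $x\mapsto x^{p/2}$ is concave, so Jensen's inequality gives $\E\max_{k\le m}|S_k|^p=\E\bigl(\max_{k\le m}|S_k|^2\bigr)^{p/2}\le\bigl(\E\max_{k\le m}|S_k|^2\bigr)^{p/2}$. Since $(S_k)$ is a mean-zero martingale (the $X_i$ being i.i.d., mean zero, variance one), Doob's $L^2$ maximal inequality yields $\E\max_{k\le m}|S_k|^2\le 4\,\E S_m^2=4m$. Hence $\E\max_{k\le m}|S_k|^p\le(4m)^{p/2}$, and the $j$-th block contributes at most $4^{p/2}2^{p-1}\,2^{j(1-p/2)}$ (the exponent of $2$ being $(1-p)(j-1)+jp/2 = j(1-p/2)+(p-1)$).

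Finally, summing over the blocks $j=0,1,\dots,\ceil{\log_2 n}$ is a geometric series with ratio $2^{1-p/2}>1$, so it is dominated by a constant multiple of its last term, giving $\E\max_{k\in[n]}|S_k|^p k^{1-p}\le C_p\,2^{(1-p/2)\ceil{\log_2 n}}\le C_p(2n)^{1-p/2}\lesssim n^{1-p/2}$, which is the asserted bound. There is no genuine obstacle here; the only points to keep in mind are that the implied constant necessarily degenerates as $p\uparrow 2$ (consistent with the $\log\log$ behaviour predicted by the law of the iterated logarithm at $p=2$), which is harmless because the lemma is invoked only for a fixed $p<2$, and the bookkeeping of the dyadic exponents, which must stay in the convergent-geometric regime $1-p/2>0$.
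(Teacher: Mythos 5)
Your proof is correct and follows essentially the same route as the paper's: a dyadic decomposition in $k$, Doob's $L^2$ maximal inequality on each block, Jensen's inequality (concavity of $x\mapsto x^{p/2}$) to pass from the $p$-th to the second moment, and a geometric series dominated by its last term. The only cosmetic difference is that the paper applies Jensen once globally to $\E\max_k k^{2(1-p)/p}S_k^2$ before decomposing, whereas you apply it blockwise, which changes nothing in the rate or the argument.
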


\section{Proofs of auxiliary results} \label{sec:aux}

This section collects the proofs of Lemma \ref{lem:er12}, Lemma \ref{lem:simple-lil}, Lemma \ref{lem:er12-all-k}, Lemma \ref{lem:er12-all-k-2-part}, Lemma \ref{lem:KL-gamma}, and Lemma \ref{lem:lp1}.

To prove Lemma \ref{lem:er12}, we need the following two famous maximal inequalities. The versions we present here are Corollary II.1.6 in \cite{revuz1999continuous} and Proposition 1.1.2 in \cite{de2012decoupling}.

\begin{lemma}[Doob's maximal inequality]\label{lem:doob}
Given a martingale $\{M_i, i=1,2,\ldots\}$ and a scalar $p>1$, we have for any $n\geq 1$,
$$\Bigl\{\mathbb{E}\Bigl(\max_{1\leq i\leq n}|M_i|^p\Bigr)\Bigr\}^{1/p}\leq \frac{p}{p-1}\Bigl(\mathbb{E}|M_n|^p\Bigr)^{1/p}.$$
\end{lemma}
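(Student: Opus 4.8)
The plan is the standard two-step route for the $L^p$ maximal inequality: first derive the weak-type (distributional) bound via optional stopping, then upgrade it by a layer-cake integration and H\"older. Write $M^*_n=\max_{1\le i\le n}|M_i|$, and note there is nothing to prove unless $\mathbb{E}|M_n|^p<\infty$, which I assume henceforth. For the first step, observe that $\{|M_i|\}_{i\le n}$ is a nonnegative submartingale (convexity of $|\cdot|$), fix $\lambda>0$, and let $\tau=\min\{i\le n:|M_i|\ge\lambda\}$ with $\tau=n$ when the set is empty, so that $\{M^*_n\ge\lambda\}=\{|M_\tau|\ge\lambda\}\in\mathcal{F}_\tau$. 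Optional sampling for the submartingale $\{|M_i|\}$ gives $\mathbb{E}[|M_n|\mid\mathcal{F}_\tau]\ge|M_\tau|$, and integrating this over the event $\{|M_\tau|\ge\lambda\}$ produces Doob's weak maximal inequality
$$\lambda\,\mathbb{P}(M^*_n\ge\lambda)\ \le\ \mathbb{E}\bigl[|M_\tau|\,\ind\{M^*_n\ge\lambda\}\bigr]\ \le\ \mathbb{E}\bigl[|M_n|\,\ind\{M^*_n\ge\lambda\}\bigr].$$

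For the second step I would use the layer-cake formula together with Tonelli's theorem:
\begin{align*}
\mathbb{E}(M^*_n)^p &= \int_0^\infty p\lambda^{p-1}\,\mathbb{P}(M^*_n\ge\lambda)\,d\lambda\\
&\le \int_0^\infty p\lambda^{p-2}\,\mathbb{E}\bigl[|M_n|\,\ind\{M^*_n\ge\lambda\}\bigr]\,d\lambda\\
&= \frac{p}{p-1}\,\mathbb{E}\bigl[|M_n|\,(M^*_n)^{p-1}\bigr],
\end{align*}
and then bound the last expectation by H\"older's inequality with exponents $p$ and $p/(p-1)$, obtaining $\tfrac{p}{p-1}(\mathbb{E}|M_n|^p)^{1/p}(\mathbb{E}(M^*_n)^p)^{(p-1)/p}$. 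Dividing both sides by $(\mathbb{E}(M^*_n)^p)^{(p-1)/p}$ and raising to the power $1/p$ then yields the claimed inequality.

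The one delicate point, and the step I would be most careful about, is justifying that last division: it requires $\mathbb{E}(M^*_n)^p<\infty$ to be known beforehand, and if that factor is zero the bound is trivial. I would get finiteness for free from the crude estimate $(M^*_n)^p\le\sum_{i=1}^n|M_i|^p$ together with the fact that $|M_i|^p$ is a submartingale, so $\mathbb{E}|M_i|^p\le\mathbb{E}|M_n|^p<\infty$. An equivalent safeguard is to run the whole of the second step with $M^*_n$ replaced by the bounded variable $M^*_n\wedge N$, then send $N\to\infty$ by monotone convergence. Everything else---optional sampling, Tonelli, H\"older---is entirely routine, so this is the only place where care is needed.
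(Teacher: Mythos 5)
Your proof is correct and complete: the weak-type bound via optional stopping for the submartingale $\{|M_i|\}$, the layer-cake/Tonelli upgrade, H\"older with exponents $p$ and $p/(p-1)$, and—importantly—the a priori finiteness of $\mathbb{E}(M^*_n)^p$ (via $(M^*_n)^p\le\sum_{i\le n}|M_i|^p$ and conditional Jensen, or the truncation $M^*_n\wedge N$) before dividing, which is exactly the one point where a careless write-up would be incomplete. The paper itself gives no proof of this lemma; it simply imports it as Corollary II.1.6 of \cite{revuz1999continuous} (see also \cite{de2012decoupling}), and your argument is the standard textbook proof behind that citation, so there is nothing further to reconcile.
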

\begin{lemma}[Levy-Ottaviani inequality] \label{lem:levy}
Given $n$ independent random variables $X_1,...,X_n$, we have for any $x>0$,
$$\mathbb{P}\Bigl(\max_{1\leq k\leq n}\Bigl|\sum_{i=1}^kX_i\Bigr|>x\Bigr)\leq 3\max_{1\leq k\leq n}\mathbb{P}\Bigl(\Bigl|\sum_{i=1}^kX_i\Bigr|>x/3\Bigr).$$
\end{lemma}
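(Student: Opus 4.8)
The plan is to deduce this maximal inequality from a first‑passage (stopping time) argument that exploits the independence of the $X_i$. Write $S_k=\sum_{i=1}^k X_i$ and set $p=\max_{1\le k\le n}\mathbb{P}(|S_k|>x/3)$. The claimed bound is trivial when $3p\ge 1$, since its left‑hand side is a probability, so assume henceforth $p<1/3$. Define the first‑passage time $\tau=\inf\{k\in[n]:|S_k|>x\}$, with $\tau=\infty$ when $|S_k|\le x$ for every $k\le n$. Then $\{\max_{1\le k\le n}|S_k|>x\}=\{\tau\le n\}=\bigcup_{k=1}^n\{\tau=k\}$, a disjoint union, and it suffices to bound $\sum_{k=1}^n\mathbb{P}(\tau=k)$.

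The heart of the argument is to split each event $\{\tau=k\}$ according to the size of the increment $S_n-S_k$. First, on $\{\tau=k\}\cap\{|S_n-S_k|\le 2x/3\}$ one has $|S_n|\ge|S_k|-|S_n-S_k|>x-2x/3=x/3$, so this event is contained in $\{\tau=k\}\cap\{|S_n|>x/3\}$. Second, $\{\tau=k\}$ is measurable with respect to $\sigma(X_1,\dots,X_k)$, whereas $S_n-S_k$ depends only on $X_{k+1},\dots,X_n$; by independence, $\mathbb{P}(\{\tau=k\}\cap\{|S_n-S_k|>2x/3\})=\mathbb{P}(\tau=k)\,\mathbb{P}(|S_n-S_k|>2x/3)$. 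Third, since $|S_n-S_k|\le|S_n|+|S_k|$, we have $\{|S_n-S_k|>2x/3\}\subseteq\{|S_n|>x/3\}\cup\{|S_k|>x/3\}$, hence $\mathbb{P}(|S_n-S_k|>2x/3)\le 2p$. Combining the three observations yields $\mathbb{P}(\tau=k)\le\mathbb{P}(\{\tau=k\}\cap\{|S_n|>x/3\})+2p\,\mathbb{P}(\tau=k)$, i.e.\ $(1-2p)\mathbb{P}(\tau=k)\le\mathbb{P}(\{\tau=k\}\cap\{|S_n|>x/3\})$.

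Summing over $k=1,\dots,n$ and using disjointness of the events $\{\tau=k\}$, we get $(1-2p)\mathbb{P}(\tau\le n)\le\mathbb{P}(|S_n|>x/3)\le p$. Since $p<1/3$ forces $1-2p>1/3$, this rearranges to $\mathbb{P}(\max_{1\le k\le n}|S_k|>x)=\mathbb{P}(\tau\le n)\le p/(1-2p)\le 3p$, which is the asserted inequality. I do not anticipate a genuine obstacle: the only points requiring care are the measurability/independence bookkeeping in the middle step and the degenerate case $k=n$, where $S_n-S_k=0$ and the first inclusion already gives $\{\tau=n\}\subseteq\{|S_n|>x/3\}$, so the estimate goes through unchanged.
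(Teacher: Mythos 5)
Your argument is correct. Note that the paper does not prove this lemma at all: it is imported as a known result (Proposition 1.1.2 in the cited decoupling monograph, Corollary II.1.6 in Revuz--Yor for the Doob inequality), so there is no internal proof to compare against. What you give is the classical first-passage proof of the Ottaviani--L\'evy inequality, and every step checks out: the decomposition of $\{\tau=k\}$ by the size of $S_n-S_k$, the independence of $\{\tau=k\}\in\sigma(X_1,\dots,X_k)$ from $S_n-S_k$, the union bound $\mathbb{P}(|S_n-S_k|>2x/3)\leq 2p$, and the final rearrangement $(1-2p)\mathbb{P}(\tau\leq n)\leq p$ with $p<1/3$ giving the factor $3$; the trivial case $3p\geq 1$ and the boundary case $k=n$ are also handled. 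This is essentially the same proof one finds in the cited reference, so your write-up simply makes the paper's black-box citation self-contained.
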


\begin{proof}[Proof of Lemma \ref{lem:er12}]
Since the proofs of the two inequalities in the lemma are the same, we only state the proof of the first one that involves $\delta_+(a_{j-1},a_j,\ell)$ and $\xi_+(a_{j-1},a_j,\ell)$. We first consider the case $f=0$. For $a+2^{\ell-1}\leq t\wedge b$, we observe that
$$|\bar{Z}_{(a:t\wedge b]}|^2\leq 2|\bar{Z}_{(a:(a+2^{\ell-1})]}|^2 + 2\Big(\frac{1}{2^{\ell-1}}\Big)^2(t\wedge b-a-2^{\ell-1})^2|\bar{Z}_{((a+2^{\ell-1}):t\wedge b]}|^2.$$
This leads to the inequality
$$\xi_+(a,b,\ell)\leq 2^{\ell+1}|\bar{Z}_{(a:(a+2^{\ell-1})]}|^2 + {8}\bar{\xi}_+(a,b,\ell),$$
where
\begin{align*}
\bar{\xi}_+(a,b,\ell)&:=2^{-\ell}\max\Big\{(t\wedge b-a-2^{\ell-1})^2|\bar{Z}_{((a+2^{\ell-1}):t\wedge b]}|^2:a+2^{\ell-1}< t\leq b\wedge (a+2^{\ell}-1)\Big\}\\
&\leq {\max\Big\{(t\wedge b-a-2^{\ell-1})|\bar{Z}_{((a+2^{\ell-1}):t\wedge b]}|^2:a+2^{\ell-1}< t\leq b\wedge (a+2^{\ell}-1)\Big\}}.
\end{align*}
Therefore, it is sufficient to bound the sum of
\begin{equation}
{8}\sum_{j=1}^{k}\sum_{\{\ell\geq 1:a_{j-1}+2^{\ell-1}\leq a_j\}}\mathbb{E}\delta_+(a_{j-1},a_j,\ell)\bar{\xi}_+(a_{j-1},a_j,\ell)\label{eq:indep1}
\end{equation}
and
\begin{equation}
2\sum_{j=1}^{k}\sum_{\{\ell\geq 1:a_{j-1}+2^{\ell-1}\leq a_j\}}\mathbb{E}\delta_+(a_{j-1},a_j,\ell)2^{\ell}|\bar{Z}_{(a_{j-1}:(a_{j-1}+2^{\ell-1})\wedge a_j]}|^2.\label{eq:indep2}
\end{equation}

\paragraph{Bounding (\ref{eq:indep1}).}
The proof consists of a two-layer truncation argument. We first split each $Z_i$ into two parts. That is,
$$Z_{i\ell}'=Z_i\ind\Big\{Z_i^2\leq \sigma^22^\ell/\ell\Big\},\quad \text{and}\quad Z_{i\ell}''=Z_i\ind\Big\{Z_i^2> \sigma^22^\ell/\ell\Big\}.$$
We also define
$$Y_{i\ell}'=Z_{i\ell}'-\mathbb{E}Z_{i\ell}',\quad\text{and}\quad Y_{i\ell}''=Z_{i\ell}''-\mathbb{E}Z_{i\ell}''.$$
Then, it is easy to see that $Z_i=Y_{i\ell}'+Y_{i\ell}''$ given that $\E Z_i=\mathbb{E}Z_{i\ell}'+\mathbb{E}Z_{i\ell}''=0$.
Using the notation
$$\mathcal{C}(a,b,\ell)=\big\{t:a+2^{\ell-1}< t\leq b\wedge(a+2^\ell-1)\big\},$$
we have the bound
\begin{eqnarray}
\label{eq:t'} \mathbb{E}\delta_+(a,b,\ell)\bar{\xi}_+(a,b,\ell) &\leq& 2\mathbb{E}\Bigl(\delta_+(a,b,\ell)2^{-\ell}\max_{t\in\mathcal{C}(a,b,\ell)}\Bigl|\sum_{i=a+2^{\ell-1}+1}^tY_{i\ell}'\Bigr|^2\Bigr) \\
\label{eq:t''} && + 2\mathbb{E}\Bigl(\delta_+(a,b,\ell)2^{-\ell}\max_{t\in\mathcal{C}(a,b,\ell)}\Bigl|\sum_{i=a+2^{\ell-1}+1}^tY_{i\ell}''\Bigr|^2\Bigr),
\end{eqnarray}
and we will bound the two terms \eqref{eq:t'} and \eqref{eq:t''} separately. We first give a bound for (\ref{eq:t''}).
\begin{eqnarray}
\nonumber && \mathbb{E}\Bigl(\delta_+(a,b,\ell)2^{-\ell}\max_{t\in\mathcal{C}(a,b,\ell)}\Bigl|\sum_{i=a+2^{\ell-1}+1}^tY_{i\ell}''\Bigr|^2\Bigr) \\
\nonumber &\leq& \mathbb{E}\Bigl(2^{-\ell}\max_{t\in\mathcal{C}(a,b,\ell)}\Bigl|\sum_{i=a+2^{\ell-1}+1}^tY_{i\ell}''\Bigr|^2\Bigr) \\
\label{eq:doob} &\leq& 4\mathbb{E}\Bigl(2^{-\ell}\Bigl|\sum_{i=a+2^{\ell-1}+1}^{a+2^{\ell}-1}Y_{i\ell}''\Bigr|^2\Bigr) \\
\label{eq:sim} &=& 4\times 2^{-\ell}\sum_{i=a+2^{\ell-1}+1}^{a+2^\ell-1}\mathbb{E}(Y_{i\ell}'')^2 \\
\nonumber &\leq& 4\times 2^{-\ell}\sum_{i=a+2^{\ell-1}+1}^{a+2^\ell-1}\mathbb{E}(Z_{i\ell}'')^2 \\
\label{eq:holder} &\leq& 4\times 2^{-\ell}\sum_{i=a+2^{\ell-1}+1}^{a+2^\ell-1}\Bigl(\mathbb{E}|Z_i|^{2+\epsilon}\Bigr)^{\frac{2}{2+\epsilon}}\mathbb{P}\Bigl(Z_i^2> \sigma^22^\ell/\ell\Bigr)^{\frac{\epsilon}{2+\epsilon}} \\
\label{eq:markov} &\leq& 4\times 2^{-\ell}\sum_{i=a+2^{\ell-1}+1}^{a+2^\ell-1}\mathbb{E}|Z_i|^{2+\epsilon}\Bigl(\frac{\ell}{\sigma^22^\ell}\Bigr)^{\epsilon/2} \\
\nonumber &\leq& C_1\sigma^2\Bigl(\frac{\ell}{2^\ell}\Bigr)^{\epsilon/2}.
\end{eqnarray}
We have used Doob's maximal inequality (Lemma \ref{lem:doob}) to derive (\ref{eq:doob}). The equality \eqref{eq:sim} is because of the fact $\mathbb{E}Y_{i\ell}''=0$. Finally, we have used H\"{o}lder's inequality and Markov's inequality to derive (\ref{eq:holder}) and (\ref{eq:markov}), respectively.  
When $Z_i$'s are identically distributed, we have
\begin{eqnarray}\label{iid-case}
&& \sum_{\ell\ge 1} 4\times 2^{-\ell}\sum_{i=a+2^{\ell-1}+1}^{a+2^\ell-1}\mathbb{E}(Z_{i\ell}'')^2 \\
\nonumber &\le & 2 \sum_{\ell=1}^\infty \mathbb{E}|Z_1|^2\mathbb{I}\{Z_1^2> \sigma^22^\ell/\ell\}\\
\nonumber &\le & 2 \mathbb{E}|Z_1|^2 \sum_{\ell=1}^\infty \mathbb{I}\{(C_1/2) \log(e+Z_1^2/\sigma^2) \ge \ell\}\\
\nonumber &\lesssim& \mathbb{E}|Z_1|^2\log(e+Z_1^2/\sigma^2). 
\end{eqnarray}

Next, we are going to derive a bound for (\ref{eq:t'}). For simplicity, we use the notation
\[
\eta(a,b,\ell)=2^{-\ell}\max_{t\in\mathcal{C}(a,b,\ell)}\Bigl|\sum_{i=a+2^{\ell-1}+1}^tY_{i\ell}'\Bigr|^2.
\]
Notice that $\{\eta(a_{j-1},a_j,\ell)\}_{j,\ell}$ are independent across all $j$ and $\ell$. We first show that $\sqrt{\eta(a,b,\ell)}$ has a mixed-type sub-Gaussian and sub-exponential tail. For any $x>0$, we have
\begin{eqnarray}
\nonumber \mathbb{P}\Bigl\{\sqrt{\eta(a,b,\ell)}>\sigma x\Bigr\} &\leq& \mathbb{P}\Bigl(2^{-\ell/2}\max_{t\in\mathcal{C}(a,b,\ell)}\Bigl|\sum_{i=a+2^{\ell-1}+1}^tY_{i\ell}'\Bigr|>\sigma x\Bigr) \\
\label{eq:levy} &\leq& 3\max_{t\in\mathcal{C}(a,b,\ell)}\mathbb{P}\Bigl(2^{-\ell/2}\Bigl|\sum_{i=a+2^{\ell-1}+1}^{t}Y_{i\ell}'\Bigr|>\sigma x/3\Bigr) \\
\label{eq:bernstein} &\leq& 6\exp\Bigl(-C_2\min\{x^2,\sqrt{\ell}x\}\Bigr),
\end{eqnarray}
where we have used Levy's maximal inequality (Lemma \ref{lem:levy}) and Bernstein's inequality to derive (\ref{eq:levy}) and (\ref{eq:bernstein}), respectively. This motivates another truncation argument on $\eta(a,b,\ell)$. That is, we consider the split $\eta(a,b,\ell)=\eta'(a,b,\ell)+\eta''(a,b,\ell)$,
where
$$\eta'(a,b,\ell)=\eta(a,b,\ell)\ind\Bigl\{\eta(a,b,\ell)\leq \sigma^2\ell\Bigr\}\quad\text{and}\quad\eta''(a,b,\ell)=\eta(a,b,\ell)\ind\Bigl\{\eta(a,b,\ell)> \sigma^2\ell\Bigr\}.$$
We first give a bound for $\mathbb{E}\Bigl\{\delta_+(a,b,\ell)\eta''(a,b,\ell)\Bigr\}$:
\begin{eqnarray*}
\mathbb{E}\Bigl\{\delta_+(a,b,\ell)\eta''(a,b,\ell)\Bigr\} &\leq& \mathbb{E}\eta''(a,b,\ell) \\
&\leq& \Bigl\{\mathbb{E}\eta^2(a,b,\ell)\Bigr\}^{1/2}\mathbb{P}\Bigl\{\eta(a,b,\ell)> \sigma^2\ell\Bigr\}^{1/2} \\
&\leq& C_3\sigma^2\exp\Bigl(-C_2\ell\Bigr),
\end{eqnarray*}
where the last inequality above is obtained by integrating the tail
$$\mathbb{E}\eta^2(a,b,\ell)=\sigma^4\int_0^{\infty}\mathbb{P}\Bigl\{\sqrt{\eta(a,b,\ell)}>\sigma u^{1/4}\Bigr\}du$$
using the tail bound (\ref{eq:bernstein}). The term $\mathbb{E}\Bigl\{\delta_+(a,b,\ell)\eta'(a,b,\ell)\Bigr\}$ will be analyzed in the end. Combining all the bounds above, we have
\begin{equation}
\mathbb{E}\delta_+(a,b,\ell)\bar{\xi}_+(a,b,\ell) \leq 4C_1\sigma^2\Bigl(\frac{\ell}{2^\ell}\Bigr)^{\epsilon/2}+4C_3\sigma^2\exp\Bigl(-C_2\ell\Bigr)+4\mathbb{E}\Bigl\{\delta_+(a,b,\ell)\eta'(a,b,\ell)\Bigr\}. \label{eq:marked}
\end{equation}
Replacing $a$ and $b$ in (\ref{eq:marked}) by $a_{j-1}$ and $a_j$, and summing up over $\ell$ and $j$, we have
\begin{eqnarray}
\nonumber && \sum_{j=1}^{k}\sum_{\{\ell\geq 1:a_{j-1}+2^{\ell-1}\leq a_j\}}\mathbb{E}\delta_+(a_{j-1},a_j,\ell)\bar{\xi}_+(a_{j-1},a_j,\ell) \\
\nonumber &\leq& 2C_1k\sigma^2\sum_\ell\Bigl(\frac{\ell}{2^\ell}\Bigr)^{\epsilon/2} + 2C_3k\sigma^2\sum_\ell\exp(-C_2\ell) \\
\nonumber && + 2\sum_{j=1}^{k}\sum_{\{\ell\geq 1:a_{j-1}+2^{\ell-1}\leq a_j\}}\mathbb{E}\Bigl\{\delta_+(a_{j-1},a_j,\ell)\eta'(a_{j-1},a_j,\ell)\Bigr\} \\
\label{eq:last-one} &\leq& C_4k\sigma^2 + 2\sum_{j=1}^{k}\sum_{\{\ell\geq 1:a_{j-1}+2^{\ell-1}\leq a_j\}}\mathbb{E}\Bigl\{\delta_+(a_{j-1},a_j,\ell)\eta'(a_{j-1},a_j,\ell)\Bigr\}.
\end{eqnarray}
When $Z_i$'s are identically distributed, we are allowed to replace the term 
$\sum_\ell (\ell/2^\ell)^{\epsilon/2}$ in the above inequality by $C_1$ in view of 
(\ref{iid-case}). 
We omit the proof for identically distributed $Z_i$'s in the sequel as its difference 
only involves another application of the above argument.

Finally, it suffices to give a bound for the second term in (\ref{eq:last-one}). We shorthand $\eta'(a_{j-1},a_j,\ell)$ by $\eta_{j\ell}$. Observe that
\begin{eqnarray}\label{eq:han-add1}
&& \sum_{j=1}^{k}\sum_{\{\ell\geq 1:a_{j-1}+2^{\ell-1}\leq a_j\}}\delta_+(a_{j-1},a_j,\ell)\eta'(a_{j-1},a_j,\ell) \notag \\
&\leq& \max\Bigl\{\sum_{j=1}^{k}\sum_{\ell=1}^{\ceil{1\vee\log_2n_j}}\delta_{j\ell}\eta_{j\ell}:\delta_{j\ell}\in\{0,1\}, \sum_{j,\ell}\delta_{j\ell}\leq\wt{k}\Bigr\},
\end{eqnarray}
where $n_j=a_j-a_{j-1}$ and $\wt{k}=\min\{k,m\}$ with $m=\sum_{j=1}^{k}\ceil{1\vee\log_2n_j}$. Equation \eqref{eq:han-add1} leads to a union bound argument. That is, for any $x>0$, we have
\begin{eqnarray}
\nonumber && \mathbb{P}\Bigl\{\sum_{j=1}^{k}\sum_{\{\ell\geq 1:a_{j-1}+2^{\ell-1}\leq a_j\}}\delta_+(a_{j-1},a_j,\ell)\eta'(a_{j-1},a_j,\ell)>\frac{8\sigma^2}{C_2}\wt{k}\log\frac{em}{\wt{k}}+x\sigma^2\Bigr\} \\
\nonumber &\leq& \sum_{\{\{\delta_{j\ell}\}:\delta_{j\ell}\in\{0,1\}, \sum_{j,\ell}\delta_{j\ell}\leq \wt{k}\}}\mathbb{P}\Bigl(\sum_{j,\ell}\delta_{j\ell}\eta_{j\ell}>\frac{8\sigma^2}{C_2}\wt{k}\log\frac{em}{\wt{k}}+x\sigma^2\Bigr) \\
\nonumber &\leq&\sum_{\{\{\delta_{j\ell}\}:\delta_{j\ell}\in\{0,1\}, \sum_{j,\ell}\delta_{j\ell}\leq \wt{k}\}} \exp\Bigl(-4L\log\frac{em}{L}-C_2x/2\Bigr)\prod_{j,\ell}\mathbb{E}\exp\Bigl(\frac{C_2\delta_{j\ell}\eta_{j\ell}}{2\sigma^2}\Bigr) \\
\label{eq:yulu} &\leq& {m\choose \wt{k}}\exp\Bigl(-4\wt{k}\log\frac{em}{\wt{k}}-C_2x/2+\wt{k}\log 5\Bigr)\\
\nonumber &\leq& \exp\Big(-C_2x/2\Big).
\end{eqnarray}
To derive (\ref{eq:yulu}), note that for $\delta_{j\ell}=1$, we have
\begin{eqnarray}
\nonumber \mathbb{E}\exp\Bigl(\frac{C_2\eta_{j\ell}}{2\sigma^2}\Bigr) &=& \int_0^{\infty}\mathbb{P}\Bigl\{\exp\Bigl(\frac{C_2\eta_{j\ell}}{2\sigma^2}\Bigr)>u\Bigr\}du \\
\nonumber &\leq& 1+\int_1^{\infty}\mathbb{P}\Bigl(\sqrt{\eta_{j\ell}}>\sigma\sqrt{\frac{2\log u}{C_2}}\Bigr)du \\
\nonumber &=& 1+\int_1^{e^{C_2\ell/2}}\mathbb{P}\Bigl\{\sqrt{\eta(a_{j-1},a_j,\ell)}>\sigma\sqrt{\frac{2\log u}{C_2}}\Bigr\}du \\
\label{eq:useb} &\leq& 1+4\int_1^{e^{C_2\ell/2}}\exp\Bigl[-C_2\min\Bigl\{\frac{2\log u}{C_2},\sqrt{\frac{2\ell\log u}{C_2}}\Bigr\}\Bigr]du \\
\nonumber &\leq& 1+4\int_1^{\infty}u^{-2}du = 5,
\end{eqnarray}
where (\ref{eq:useb}) is an application of the tail bound (\ref{eq:bernstein}). The tail bound (\ref{eq:useb}) allows us to integrate out the tail and bound the expectation. That is,
$$\sum_{j=1}^{k}\sum_{\{\ell\geq 1:a_{j-1}+2^{\ell-1}\leq a_j\}}\mathbb{E}\Bigl\{\delta_+(a_{j-1},a_j,\ell)\eta'(a_{j-1},a_j,\ell)\Bigr\}\leq C_5\sigma^2\wt{k}\log(em/\wt{k}).$$
In view of (\ref{eq:last-one}), we have
$$\sum_{j=1}^{k}\sum_{\{\ell\geq 1:a_{j-1}+2^{\ell-1}\leq a_j\}}\mathbb{E}\delta_+(a_{j-1},a_j,\ell)\bar{\xi}_+(a_{j-1},a_j,\ell)\leq C_4\sigma^2k+C_5\sigma^2\wt{k}\log(em/\wt{k}).$$
This gives the desired bound for \eqref{eq:indep1} by realizing that $\wt{k}\log(em/\wt{k})\lesssim k\log\log(16n/k)$.

\paragraph{Bounding (\ref{eq:indep2}).}
For any $\ell\geq 1$ such that $a_{j-1}+2^{\ell-1}\leq a_j$, we have
\begin{align*}
& 2^{\ell}|\bar{Z}_{(a_{j-1}:(a_{j-1}+2^{\ell-1})]}|^2 \\
\leq& 2^{\ell}\Big(\frac{3}{8}|\bar{Z}_{(a_{j-1}:(a_{j-1}+2^{\ell-2})]}|^2+\frac{3}{4}|\bar{Z}_{((a_{j-1}+2^{\ell-2}):(a_{j-1}+2^{\ell-1})]}|^2\Big) \\
\leq& 2^{\ell}\Big(\frac{9}{64}|\bar{Z}_{(a_{j-1}:(a_{j-1}+2^{\ell-3})]}|^2+\frac{9}{32}|\bar{Z}_{((a_{j-1}+2^{\ell-3}):(a_{j-1}+2^{\ell-2})]}|^2+\frac{3}{4}|\bar{Z}_{((a_{j-1}+2^{\ell-2}):(a_{j-1}+2^{\ell-1})]}|^2\Big) \\
\leq& 2^{\ell}\frac{3}{4}\sum_{h=0}^{\ell-1}\Big(\frac{3}{8}\Big)^{\ell-1-h}|\bar{Z}_{((a_{j-1}+2^{h-1}):(a_{j-1}+2^{h})]}|^2.
\end{align*}
We introduce the notation
$$u_{jh}=2^{h-1}|\bar{Z}_{((a_{j-1}+2^{h-1}):(a_{j-1}+2^{h})]}|^2.$$
Notice that $\{u_{jh}\}_{j,h}$ are independent across all $j$ and $h$. Then, we have
$$2^{\ell}|\bar{Z}_{(a_{j-1}:(a_{j-1}+2^{\ell-1})]}|^2\leq 4\sum_{h=0}^{\ell-1}\Big(\frac{3}{4}\Big)^{\ell-h}u_{jh}.$$
Therefore, (\ref{eq:indep2}) can be bounded by
\begin{equation}
8\sum_{j=1}^{k}\sum_{\{\ell\geq 1: a_{j-1}+2^{\ell-1}\leq a_j\}}\mathbb{E}\delta_+(a_{j-1},a_j,\ell)\sum_{h=0}^{\ell-1}\Big(\frac{3}{4}\Big)^{\ell-h}u_{jh}.\label{eq:roy-duke}
\end{equation}
A similar double truncation argument that is used to drive (\ref{eq:marked}) also gives
$$\mathbb{E}\delta_+(a_{j-1},a_j,\ell)u_{jh}\leq C_6\sigma^2\Big(\frac{h}{2^h}\Big)^{\epsilon/2}+C_7\sigma^2\exp\Big(-C_8h\Big)+4\mathbb{E}\Big(\delta_+(a_{j-1},a_j,\ell)u_{jh}'\Big),$$
where the random variable $u_{jh}'$ satisfies $\mathbb{E}\exp\Big(\frac{tu_{jh}'}{\sigma^2}\Big)\leq e^{c't}$ for all $0<t<c$ for some small constant $c>0$, and is independent across $j$ and $h$. Summing up the first two terms over $j,\ell,h$, we get
$$8\sum_{j=1}^{k}\sum_{\{\ell\geq 1: a_{j-1}+2^{\ell-1}\leq a_j\}}\sum_{h=0}^{\ell-1}\Big(\frac{3}{4}\Big)^{\ell-h}\Big(C_6\sigma^2\Big(\frac{h}{2^h}\Big)^{\epsilon/2}+C_7\sigma^2\exp\Big(-C_8h\Big)\Big)\leq C_9\sigma^2k.$$
Summing up the third term over $j,\ell,h$, we get
$$32\sum_{j=1}^{k}\sum_{\{\ell\geq 1: a_{j-1}+2^{\ell-1}\leq a_j\}}\mathbb{E}\delta_+(a_{j-1},a_j,\ell)\sum_{h=0}^{\ell-1}\Big(\frac{3}{4}\Big)^{\ell-h}u_{jh}'.$$
We use a union bound argument to bound its value. For any $x>0$, we have
\begin{align*}
& \mathbb{P}\Big(\sum_{j=1}^{k}\sum_{\{\ell\geq 1: a_{j-1}+2^{\ell-1}\leq a_j\}}\delta_+(a_{j-1},a_j,\ell)\sum_{h=0}^{\ell-1}\Big(\frac{3}{4}\Big)^{\ell-h}u_{jh}'>x\sigma^2+\wt{C}\sigma^2\wt{k}\log\frac{em}{\wt{k}}\Big) \\
\leq& \sum_{\{\{\delta_{jl}\}:\delta_{jl}\in\{0,1\},\sum_{j,l}\delta_{jl}\leq \wt{k}\}}\mathbb{P}\Big(\sum_{j=1}^{k}\sum_{\{\ell\geq 1: a_{j-1}+2^{\ell-1}\leq a_j\}}\sum_{h=0}^{\ell-1}\Big(\frac{3}{4}\Big)^{\ell-h}\delta_{j\ell}u_{jh}'>x\sigma^2+\wt{C}\sigma^2\wt{k}\log\frac{em}{\wt{k}}\Big) \\
\leq& \sum_{\{\{\delta_{jl}\}:\delta_{jl}\in\{0,1\},\sum_{j,l}\delta_{jl}\leq \wt{k}\}}e^{-\lambda x-\lambda\wt{C}\wt{k}\log\frac{em}{\wt{k}}}\mathbb{E}\exp\Big(\lambda\sum_{j,\ell,h}\Big(\frac{3}{4}\Big)^{\ell-h}\delta_{j\ell}u_{jh}'/\sigma^2\Big) \\
=& \sum_{\{\{\delta_{jl}\}:\delta_{jl}\in\{0,1\},\sum_{j,l}\delta_{jl}\leq \wt{k}\}}e^{-\lambda x-\lambda\wt{C}\wt{k}\log\frac{em}{\wt{k}}}\prod_{j,h}\mathbb{E}\exp\Big(\lambda\sum_{\ell}\Big(\frac{3}{4}\Big)^{\ell-h}\delta_{j\ell}u_{jh}'/\sigma^2\Big) \\
\leq& \sum_{\{\{\delta_{jl}\}:\delta_{jl}\in\{0,1\},\sum_{j,l}\delta_{jl}\leq \wt{k}\}}e^{-\lambda x-\lambda\wt{C}\wt{k}\log\frac{em}{\wt{k}}}\exp\Big(c'\lambda\sum_{j,\ell,h}\Big(\frac{3}{4}\Big)^{\ell-h}\delta_{j\ell}\Big) \\
\leq& \sum_{\{\{\delta_{jl}\}:\delta_{jl}\in\{0,1\},\sum_{j,l}\delta_{jl}\leq \wt{k}\}}e^{-\lambda x-\lambda\wt{C}\wt{k}\log\frac{em}{\wt{k}}}\exp\Big(c_1\lambda\sum_{j,\ell}\delta_{j\ell}\Big) \\
\leq& {m\choose \wt{k}}\exp\Big(-\lambda x + c_1\lambda \wt{k} -\lambda\wt{C}\wt{k}\log\frac{em}{\wt{k}}\Big) \\
\leq& \exp\Big(-\lambda x\Big),
\end{align*}
where we take a sufficiently large $\wt{C}$ and $\lambda$ is chosen to be a constant so that $\lambda\sum_{\ell}\Big(\frac{3}{4}\Big)^{\ell-h}<c$ for all $h$.
Therefore, by integrating up the tail, we get
\begin{align*}
& 32\sum_{j=1}^{k}\sum_{\{\ell\geq 1: a_{j-1}+2^{\ell-1}\leq a_j\}}\mathbb{E}\delta_+(a_{j-1},a_j,\ell)\sum_{h=0}^{\ell-1}\Big(\frac{3}{4}\Big)^{\ell-h}u_{jh}'\\
\leq& C''\sigma^2\wt{k}\log(em/\wt{k}).
\end{align*}
Combining the bounds, we obtain $C'_1\sigma^2k+C'_2\sigma^2\wt{k}\log(em/\wt{k})\lesssim \sigma^2k\log\log(16n/k)$ as an upper bound for (\ref{eq:indep2}).

The proof for $f\geq 1$ is the same, because the proof only depends on the constraint that $\sum_{j,\ell}\delta_{j\ell}\leq\wt{k}$, which is not affected by the value of $f$.
\end{proof}

\begin{proof}[Proof of Lemma \ref{lem:simple-lil}]
Note that
$$\sum_{j=1}^k\mathbb{E}\max_{a_{j-1}<a\leq a_j}(a-a_{j-1})\wb{Z}_{(a_{j-1}:a]}^2\leq \sum_{j=1}^k\sum_{\{\ell\geq 1: a_{j-1}+2^{\ell-1}\leq a_j\}}\mathbb{E}\delta^-_{j\ell}\xi_-(a_{j-1},a_j,\ell),$$
where $\{\delta_{j\ell}^-\}$ are binary random variables that satisfy $\sum_{j\ell}\delta_{j\ell}^-\leq \wt{k}$. Therefore, we obtain the bound $\sigma^2k\log\log(16n/k)$ by the same argument in the proof of Lemma \ref{lem:er12}. The second term can be bounded in the same way, and thus the proof is complete.
\end{proof}

\begin{proof}[Proofs of Lemma \ref{lem:er12-all-k} and Lemma \ref{lem:er12-all-k-2-part}]
It is sufficient to prove Lemma \ref{lem:er12-all-k-2-part} because the conclusion of Lemma \ref{lem:er12-all-k} can be obtained by integrating out the tail bound given by Lemma \ref{lem:er12-all-k-2-part}.
The analysis is very similar to the proof of Lemma \ref{lem:er12}. The only difference is that Lemma \ref{lem:er12} is for a fixed $k$, while here we need to analyze a random $\wh{k}$. The quantities $\delta_+(a_{j-1},a_j,\ell)$, $\delta_-(a_{j-1},a_j,\ell)$, $\xi_+(a_{j-1},a_j,\ell)$, and $\xi_-(a_{j-1},a_j,\ell)$ are defined with this random $\wh{k}$ instead of a fixed one. With slight abuse of notation, we still use $\wt{k}=\min\{\wh{k},m\}$, where $m=\sum_{j=1}^{k}\ceil{1\vee\log_2n_j}$. Note that here $\wt{k}$ is random as well.

We only state the analysis of the first term in (\ref{eq:replica}) that involves $\delta_+(a_{j-1},a_j,\ell)$ and $\xi_+(a_{j-1},a_j,\ell)$. The analysis for the second term uses the same argument, and is thus omitted. We first consider $f=0$. Use the same argument in the proof of Lemma \ref{lem:er12}, and it is sufficient to bound the sum of
\begin{equation}
\sum_{j=1}^{k}\sum_{\{\ell\geq 1:a_{j-1}+2^{\ell-1}\leq a_j\}}\delta_+(a_{j-1},a_j,\ell)\bar{\xi}_+(a_{j-1},a_j,\ell) \label{eq:indep1-r}
\end{equation}
and
\begin{equation}
\sum_{j=1}^{k}\sum_{\{\ell\geq 1:a_{j-1}+2^{\ell-1}\leq a_j\}}\delta_+(a_{j-1},a_j,\ell)2^{\ell}|\bar{Z}_{(a_{j-1}:(a_{j-1}+2^{\ell-1})\wedge a_j]}|^2. \label{eq:indep2-r}
\end{equation}

\paragraph{Bounding (\ref{eq:indep1-r}).}
This step is very similar to the corresponding step in bounding (\ref{eq:indep1}).  Following the arguments that lead to (\ref{eq:last-one}), it can be shown that (\ref{eq:indep1-r}) can be bounded by the sum of two random variables. The first one has bound $O(k\sigma^2)$ in expectation as in the first term in (\ref{eq:last-one}), and for the second term, we need to bound
$$\sum_{j=1}^{k}\sum_{\{\ell\geq 1:a_{j-1}+2^{\ell-1}\leq a_j\}}\Bigl\{\delta_+(a_{j-1},a_j,\ell)\eta'(a_{j-1},a_j,\ell)\Bigr\}.$$
We then shorthand $\eta'(a_{j-1},a_j,\ell)$ by $\eta_{j\ell}$. Observe that
\begin{eqnarray*}
&& \sum_{j=1}^{k}\sum_{\{\ell\geq 1:a_{j-1}+2^{\ell-1}\leq a_j\}}\delta_+(a_{j-1},a_j,\ell)\eta'(a_{j-1},a_j,\ell) \\
&\leq& \max\Bigl\{\sum_{j=1}^{k}\sum_{\ell=1}^{\ceil{1\vee\log_2n_j}}\delta_{j\ell}\eta_{j\ell}:\delta_{j\ell}\in\{0,1\}, \sum_{j,\ell}\delta_{j\ell}\leq\wt{k}\Bigr\},
\end{eqnarray*}
where $n_j=a_j-a_{j-1}$ and $\wt{k}=\min\{\hat{k},m\}$ with $m=\sum_{j=1}^{k}\ceil{1\vee\log_2n_j}$. This then leads to a union bound argument. That is, for any $x>0$, we have
\begin{eqnarray}
\nonumber && \mathbb{P}\Bigl\{\sum_{j=1}^{k}\sum_{\{\ell\geq 1:a_{j-1}+2^{\ell-1}\leq a_j\}}\delta_+(a_{j-1},a_j,\ell)\eta'(a_{j-1},a_j,\ell)>\frac{8\sigma^2}{C_2}\wt{k}\log\frac{em}{\wt{k}}+x\sigma^2\Bigr\} \\
\nonumber &\leq& \sum_{L=1}^m\mathbb{P}\Bigl\{\sum_{j=1}^{k}\sum_{\{\ell\geq 1:a_{j-1}+2^{\ell-1}\leq a_j\}}\delta_+(a_{j-1},a_j,\ell)\eta'(a_{j-1},a_j,\ell)>\frac{8\sigma^2}{C_2}\wt{k}\log\frac{em}{\wt{k}}+x\sigma^2, \wt{k}=L\Bigr\} \\
\nonumber &\leq& \sum_{L=1}^m\sum_{\{\{\delta_{j\ell}\}:\delta_{j\ell}\in\{0,1\}, \sum_{j,\ell}\delta_{j\ell}\leq L\}}\mathbb{P}\Bigl(\sum_{j,\ell}\delta_{j\ell}\eta_{j\ell}>\frac{8\sigma^2}{C_2}L\log\frac{em}{L}+x\sigma^2\Bigr) \\
\nonumber &\leq& \sum_{L=1}^m\sum_{\{\{\delta_{j\ell}\}:\delta_{j\ell}\in\{0,1\}, \sum_{j,\ell}\delta_{j\ell}\leq L\}} \exp\Bigl(-4L\log\frac{em}{L}-C_2x/2\Bigr)\prod_{j,\ell}\mathbb{E}\exp\Bigl(\frac{C_2\delta_{j\ell}\eta_{j\ell}}{2\sigma^2}\Bigr) \\
\label{eq:yulu-r} &\leq& \sum_{L=1}^m{m\choose L}\exp\Bigl(-4L\log\frac{em}{L}-C_2x/2+L\log 5\Bigr)\\
\nonumber &\leq& \exp\Big(-C_2x/2\Big).
\end{eqnarray}
The inequality (\ref{eq:yulu-r}) can be derived in the same way as (\ref{eq:yulu}).

\paragraph{Bounding (\ref{eq:indep2-r}).}
Similar to the corresponding step in bounding (\ref{eq:indep2}), (\ref{eq:indep2-r}) can also be bounded by two terms. The first term has a bound $O(\sigma^2 k)$ in expectation. For the second term, we need to bound
$$\sum_{j=1}^{k}\sum_{\{\ell\geq 1: a_{j-1}+2^{\ell-1}\leq a_j\}}\delta_+(a_{j-1},a_j,\ell)\sum_{h=0}^{\ell-1}\Big(\frac{3}{4}\Big)^{\ell-h}u_{jh}',$$
where we use the same notation as in (\ref{eq:roy-duke}).
We use a union bound argument to bound its value. For any $x>0$, we have
\begin{align}
& \mathbb{P}\Big(\sum_{j=1}^{k}\sum_{\{\ell\geq 1: a_{j-1}+2^{\ell-1}\leq a_j\}}\delta_+(a_{j-1},a_j,\ell)\sum_{h=0}^{\ell-1}\Big(\frac{3}{4}\Big)^{\ell-h}u_{jh}'>x\sigma^2+\wt{C}\sigma^2\wt{k}\log\frac{em}{\wt{k}}\Big) \notag\\
\leq& \sum_{L=1}^m \mathbb{P}\Big(\sum_{j=1}^{k}\sum_{\{\ell\geq 1: a_{j-1}+2^{\ell-1}\leq a_j\}}\delta_+(a_{j-1},a_j,\ell)\sum_{h=0}^{\ell-1}\Big(\frac{3}{4}\Big)^{\ell-h}u_{jh}'>x\sigma^2+\wt{C}\sigma^2\wt{k}\log\frac{em}{\wt{k}},\wt{k}=L\Big) \notag\\
\leq& \sum_{L=1}^m\sum_{\{\{\delta_{jl}\}:\delta_{jl}\in\{0,1\},\sum_{j,l}\delta_{jl}\leq L\}}\mathbb{P}\Big(\sum_{j=1}^{k}\sum_{\{\ell\geq 1: a_{j-1}+2^{\ell-1}\leq a_j\}}\sum_{h=0}^{\ell-1}\Big(\frac{3}{4}\Big)^{\ell-h}\delta_{j\ell}u_{jh}'>x\sigma^2+\wt{C}\sigma^2L\log\frac{em}{L}\Big) \notag\\
\leq& \sum_{L=1}^m\sum_{\{\{\delta_{jl}\}:\delta_{jl}\in\{0,1\},\sum_{j,l}\delta_{jl}\leq L\}}e^{-\lambda x-\lambda\wt{C}L\log\frac{em}{L}}\mathbb{E}\exp\Big(\lambda\sum_{j,\ell,h}\Big(\frac{3}{4}\Big)^{\ell-h}\delta_{j\ell}u_{jh}'/\sigma^2\Big) \notag\\
=& \sum_{L=1}^m\sum_{\{\{\delta_{jl}\}:\delta_{jl}\in\{0,1\},\sum_{j,l}\delta_{jl}\leq L\}}e^{-\lambda x-\lambda\wt{C}L\log\frac{em}{L}}\prod_{j,h}\mathbb{E}\exp\Big(\lambda\sum_{\ell}\Big(\frac{3}{4}\Big)^{\ell-h}\delta_{j\ell}u_{jh}'/\sigma^2\Big) \notag\\
\leq& \sum_{L=1}^m\sum_{\{\{\delta_{jl}\}:\delta_{jl}\in\{0,1\},\sum_{j,l}\delta_{jl}\leq L\}}e^{-\lambda x-\lambda\wt{C}L\log\frac{em}{L}}\exp\Big(c'\lambda\sum_{j,\ell,h}\Big(\frac{3}{4}\Big)^{\ell-h}\delta_{j\ell}\Big) \label{eq:han-add2}\\
\leq& \sum_{L=1}^m\sum_{\{\{\delta_{jl}\}:\delta_{jl}\in\{0,1\},\sum_{j,l}\delta_{jl}\leq L\}}e^{-\lambda x-\lambda\wt{C}L\log\frac{em}{L}}\exp\Big(c_1\lambda\sum_{j,\ell}\delta_{j\ell}\Big) \notag\\
\leq& \sum_{L=1}^m{m\choose L}\exp\Big(-\lambda x + c_1\lambda L -\lambda\wt{C}L\log\frac{em}{L}\Big) \notag\\
\leq& \exp\Big(-\lambda x\Big),\notag
\end{align}
where we take a sufficiently large $\wt{C}$ and $\lambda$ is chosen to be a constant so that $\lambda\sum_{\ell}\Big(\frac{3}{4}\Big)^{\ell-h}<c$ for all $h$, which facilitates \eqref{eq:han-add2} by observing that $\mathbb{E}\exp\Big(\frac{tu_{jh}'}{\sigma^2}\Big)\leq e^{c't}$ for all $0<t<c$.

In fact, the above analysis holds for any $f\geq 0$, since the change from $\delta_+(a_{j-1},a_j,\ell)$ to $\delta_+(a_{j-1},a_j,\ell+f)$ does not affect the argument.
To conclude, we just derived the bound $\wt{R}_1+\wt{R}_2=\sum_{f\geq 0}2^{-f}\wt{R}_1(f)+\sum_{f\geq 0}2^{-f}\wt{R}_2(f)$ with $\mathbb{E}\wt{R}_1=\mathbb{E}\sum_{f\geq 0}2^{-f}\wt{R}_1(f)\lesssim k\sigma^2$. The term $\wt{R}_2(f)$ enjoys the tail bound that for any $t>0$,
$$\mathbb{P}\Big(\wt{R}_2(f)>C_3\sigma^2\Big(\wt{k}\log(em/\wt{k})+t\Big)\Big)\leq\exp\Big(-C_4t\Big).$$
Therefore, we have for $\wt{R}_2=\sum_{f\geq 0}2^{-f}\wt{R}_2(f)$,
\begin{eqnarray*}
&& \mathbb{P}\Big(\wt{R}_2>C_5\sigma^2\Big(\wt{k}\log(em/\wt{k})+t\Big)\Big) 
\leq \exp\Big(-C_6t\Big)
\end{eqnarray*}
by a standard argument on the sum of possibly dependent subexponential random variables.

To finish the proof, we study the order of $\wt{k}\log(em/\wt{k})$ and the goal is to prove that the following inequality
\[
\wt{k}\log(em/\wt{k})\lesssim k\log\log(16n/k) + \wh{k}\log\log(16n/\wh{k})
\]
universally holds. Recall that $\wt{k}=\min\{\wh k, m\}$, and we separate the problem to three cases. First, if $\wh k\geq m$, we have $\wt{k}\log(em/\wt{k})=m\leq \wh k\leq \wh{k}\log\log(16n/\wh{k})$. Secondly, if $\wh k\leq k$, since the function $x\log (em/x)$ is strictly increasing over the range $x\in(0,m]$, we have $\wt{k}\log(em/\wt{k})\leq k\log(em/k)\lesssim k\log\log(16n/k)$. Thirdly, if $\wh k\in (k,m)$, we have 
\[
\wt{k}\log(em/\wt{k})= \wh k\log \Big( \frac{e\sum_{j=1}^{k}[1\vee \log_2 n_j]}{\wh k}\Big)\leq \wh k\log \Big( \frac{e\sum_{j=1}^{k}[1\vee \log_2 n_j]}{k}\Big)\lesssim \wh k\log\log(16n/k).
\]
Since $\wh k\in (k,m)$ implies $k<\wh k<k\log(en/k)$ and $\log(16n/k)-\log(16n/\wh k)=\log (\wh k)-\log (k)<\log\log(16n/k)$ within this regime, we have, for $\wh k\in (k,m)$,
\[
\wt{k}\log(em/\wt{k})\leq\wh k\log\log(16n/k)\lesssim \wh{k}\log\log(16n/\wh{k}).
\]
Combining the above three cases yields $\wt{k}\log(em/\wt{k})\lesssim k\log\log(16n/k) + \wh{k}\log\log(16n/\wh{k})$,  and the proof is hence complete.
\end{proof}

\begin{proof}[Proof of Lemma \ref{lem:isotonic-bound}]
We omit the superscript and use $\hat{\theta}$ for $\hat{\theta}^{(n)}$ and $\theta$ for $\theta^*$. We decompose $\|\hat{\theta}-\theta\|^2$ as the sum of $\sum_{j=1}^n(\hat{\theta}_j-\theta_j)^2_+$ and $\sum_{j=1}^n(\hat{\theta}_j-\theta_j)^2_-$. Following \cite{zhang2002risk}, we define
$$m_j=\max\Big\{m\geq 0: \wb{\theta}_{[j:j+m]}-\theta_j\leq v(m), j+m\leq n\Big\},$$
where the value $v(m)$ will be determined later. We then have
$$\sum_{j=1}^n(\hat{\theta}_j-\theta_j)^2_+\leq 2\sum_{j=1}^nv(m_j)^2 + 2\sum_{j=1}^n\Big(\min_{j\leq l\leq j+m_j}\max_{k\leq j}\wb{Z}_{[k:l]}\Big)_+^2.$$
Note that
\begin{eqnarray*}
&& \mathbb{P}\Big(\sum_{j=1}^n\Big(\min_{j\leq l\leq j+m_j}\max_{k\leq j}\wb{Z}_{[k:l]}\Big)_+^2 > t\Big)  \\
&\leq& \frac{1}{t^{1+\epsilon/2}}\mathbb{E}\Big(\sum_{j=1}^n\Big(\min_{j\leq l\leq j+m_j}\max_{k\leq j}\wb{Z}_{[k:l]}\Big)_+^2\Big)^{1+\epsilon/2} \\
&\leq& \frac{1}{t^{1+\epsilon/2}}\mathbb{E}\Big(\sum_{j=1}^n\Big(\max_{k\leq j}\wb{Z}_{[k:j+m_j]}\Big)_+^2\Big)^{1+\epsilon/2} \\
&\leq& \frac{1}{t^{1+\epsilon/2}}\Big(\sum_{j=1}^n\Big(\mathbb{E}\Big(\max_{k\leq j}\wb{Z}_{[k:j+m_j]}\Big)_+^{2+\epsilon}\Big)^{\frac{1}{1+\epsilon/2}}\Big)^{1+\epsilon/2} \\
&\leq& C\Big(\frac{1}{t}\sum_{j=1}^n\frac{\sigma^2}{m_j+1}\Big)^{1+\epsilon/2},
\end{eqnarray*}
where the third inequality is due to triangle inequality and the last inequality is by noticing $\max_{1\leq i\leq n}\mathbb{E}\Bigl|Z_i/\sigma\Bigr|^{2+\epsilon}\leq C_1$ and employing Doob's maximal inequality for reverse submartingales as used in the end of Page 534 in \cite{zhang2002risk}. The same analysis is also applied to $\sum_{j=1}^n(\hat{\theta}_j-\theta_j)^2_-$. We take $v(m_j)=\sqrt{\frac{\sigma^2}{m_j+1}}$, and then $\sum_{j=1}^nv(m_j)^2=\sum_{j=1}^n\frac{\sigma^2}{m_j+1}$. This implies that there exist constants $C_1$ and $C_2$, such that
$$\mathbb{P}\Big(\|\hat{\theta}-\theta\|^2> C_1(1+t)\sum_{j=1}^n\frac{\sigma^2}{m_j+1}\Big)\leq C_2\Big(\frac{1}{1+t}\Big)^{1+\epsilon/2}.$$
Now it is sufficient to give an upper bound for $\sum_{j=1}^n\frac{\sigma^2}{m_j+1}$. Define $l(m)=|\{j:m_j<m\}|$. Then,
$$\sigma^2\sum_{j=1}^n\frac{1}{m_j+1}\leq \sigma^2\sum_{\ell\geq 0} \frac{1}{2^{\ell}+1}\Big(l(2^{\ell+1})-l(2^{\ell})\Big).$$
Now we derive an upper bound for $l(m)$. By the definition of $m_j$, we have
\begin{eqnarray*}
l(m) &\leq& 3m + |\{m<j\leq n-2m-1: m_j<m\}| \\
&\leq& 3m + \sum_{j=m+1}^{n-2m-1}\frac{\wb{\theta}_{[j:j+m+1]}-\theta_j}{v(m)} \\
&\leq& 3m + \sum_{j=m+1}^{n-2m-1}\frac{\theta_{j+m+1}-\theta_j}{v(m)} \\
&\leq& 3m + m\frac{\theta_{n-m}-\theta_{1+m}}{v(m)} \\
&\leq& 3m + m\sqrt{m+1}\Big(\wb{\theta}_{[n-m:n-m/2)}-\wb{\theta}_{(1+m/2:1+m]}\Big)/\sigma.
\end{eqnarray*}
Recall that $\wb{l}(m)=\min\Big\{n,3m + m\sqrt{m+1}\Big(\wb{\theta}_{[n-m:n-m/2)}-\wb{\theta}_{(1+m/2:1+m]}\Big)/\sigma\Big\}$ for $m\leq n/3$ and $\wb{l}(m)=n$ for $m>n/3$, and then we have $l(m)\leq\wb{l}(m)$. We also define
\begin{eqnarray*}
\wb{l}_1(m) &=& \min\Big\{n,3m\Big\}, \\
\wb{l}_2(m) &=& \min\Big\{n,m\sqrt{m+1}V/\sigma\Big\},
\end{eqnarray*}
so that $\wb{l}(m)\leq \wb{l}_1(m)+\wb{l}_2(m)$.
This leads to the bound
\begin{eqnarray*}
&& \sigma^2\sum_{\ell\geq 0} \frac{1}{2^{\ell}+1}\Big(l(2^{\ell+1})-l(2^{\ell})\Big) \\
&\leq& 2\sigma^2\sum_{\ell\geq 0}\frac{\wb{l}(2^{\ell+1})-\wb{l}(2^{\ell})}{2^{\ell+1}} \\
&\leq& 2\sigma^2\sum_{\ell\geq 0}\frac{\wb{l}_1(2^{\ell+1})-\wb{l}_1(2^{\ell})}{2^{\ell+1}} + 2\sigma^2\sum_{\ell\geq 0}\frac{\wb{l}_2(2^{\ell+1})-\wb{l}_2(2^{\ell})}{2^{\ell+1}} \\
&\leq& 2\sigma^2\Big( \sum_{\ell\geq 0: \wb{l}_1(2^{\ell})\leq n} \frac{\min\{n,3 \times 2^{\ell+1}\}}{2^{\ell+1}} + \sum_{\ell\geq 0: \wb{l}_2(2^{\ell})\leq n} \frac{\min\Big\{n, 2^{\ell+1}\sqrt{2^{\ell+1}+1}V/\sigma\Big\}}{2^{\ell+1}} \Big)\\
&\leq& C\sigma^2\min\Big\{n, \log (en) + n^{1/3}\Big(\frac{V}{\sigma}\Big)^{2/3}\Big\}.
\end{eqnarray*}
The above argument leads to the first and the third inequalities in Lemma \ref{lem:isotonic-bound}.

To prove the second inequality, recall that
$$\hat{l}(m)=\min\Big\{n,3m + m\sqrt{m+1}\Big(\wb{X}_{[n-m:n-m/2)}-\wb{X}_{(1+m/2:1+m]}\Big)/\sigma\Big\}.$$
Then, we have
$$\left|\sum_{\ell\geq 0:2^{\ell}\leq n/3}\frac{\wh{l}(2^{\ell+1})-\wb{l}(2^{\ell+1})}{2^{\ell+1}}\right|\leq \sum_{\ell\geq 0: 2^{\ell}\leq n/3}\frac{\sqrt{2^{\ell+1}+1}\left|\wb{Z}_{[n-2^{\ell+1}:n-2^{\ell})}-\wb{Z}_{(1+2^{\ell}:1+2^{\ell+1}]}\right|}{\sigma}.$$
By Chebyshev's inequality, we have for any $t>0$,
$$\mathbb{P}\Big(\left|\sum_{\ell\geq 0:2^{\ell}\leq n/3}\frac{\wh{l}(2^{\ell+1})-\wb{l}(2^{\ell+1})}{2^{\ell+1}}\right|> C_3(1+t)\log (en)\Big)\leq C_4\Big(\frac{1}{1+t}\Big)^2,$$
with some constants $C_3,C_4$. The proof is thus complete.
\end{proof}

\begin{proof}[Proof of Lemma \ref{lem:KL-gamma}]
For $\gamma\in(0,1]$, we have
$$D(p_{\gamma,a}||p_{\gamma,b})\leq\int p_{\gamma,a}(x)\Bigl||x-a|^{\gamma}-|x-b|^{\gamma}\Bigr|dx\leq |a-b|^{\gamma}\int p_{\gamma,a}(x)dx=|a-b|^{\gamma},$$
where we have used the inequality $|x+y|^{\gamma}\leq |x|^{\gamma}+|y|^{\gamma}$ for $\gamma\in(0,1]$. For $\gamma\in(1,2]$, we write $\beta=\gamma-1\in(0,1]$. For the function $f(\Delta)=|x+\Delta|^{\gamma}$, its absolute derivative is $|f'(\Delta)|=\gamma|x+\Delta|^{\beta}$. Then, $f(\Delta)=f(0)+f'(\xi)\Delta$, where $\xi$ is a scalar between $0$ and $\Delta$. This leads to the inequality
\begin{equation}
\Bigl||x+\Delta|^{\gamma}-|x|^{\gamma}\Bigr| \leq \gamma|\Delta||x+\xi|^{\beta}\leq \gamma|\Delta|(|x|^{\beta}+|\xi|^{\beta})\leq \gamma|\Delta||x|^{\beta}+\gamma|\Delta|^{\gamma}.\label{eq:taylor}
\end{equation}
Using (\ref{eq:taylor}), with $\Delta=a-b$, we have
\begin{eqnarray*}
D(p_{\gamma,a}||p_{\gamma,b}) &\leq& \int p_{\gamma,a}(x)\Bigl||x-a|^{\gamma}-|x-b|^{\gamma}\Bigr|dx \\
&=& \int p_{\gamma,0}(x)\Bigl||x|^{\gamma}-|x+a-b|^{\gamma}\Bigr|dx \\
&\leq& \gamma\int p_{\gamma,0}(x)|x|^{\beta}dx |a-b| + \gamma\int p_{\gamma,0}(x)dx |a-b|^{\gamma} \\
&\leq& C\Bigl(|a-b|+|a-b|^{\gamma}\Bigr).
\end{eqnarray*}
Hence, the proof is complete.
\end{proof}

\begin{proof}[Proof of Lemma \ref{lem:lp1}]
When $1\leq p<2$, we have 
\begin{align*}
\E\Big\{\max_{k\in [n]} k\Big(\frac{|S_k|}{k}\Big)^p\Big\}\leq \Big(\E\max_{k\in[n]}k^{\frac{2(1-p)}{p}}S_k^2\Big)^{p/2}.
\end{align*}
In addition, denoting $\alpha=2(p-1)/p\in [0,1)$, we have
\begin{align*}
\E\max_{k\in[n]}k^{\frac{2(1-p)}{p}}S_k^2&\leq \E\sum_{\ell=0}^{\lceil\log_2 n\rceil}\max_{2^{\ell-1}<k\leq 2^{\ell}}k^{-\alpha}S_k^2= \sum_{\ell=0}^{\lceil\log_2 n\rceil}\E\max_{2^{\ell-1}<k\leq 2^{\ell}}k^{-\alpha}S_k^2\\
&\leq \sum_{\ell=0}^{\lceil\log_2 n\rceil}2^{-\alpha(\ell-1)}\E\max_{2^{\ell-1}<k\leq 2^{\ell}}S_k^2\leq  4\sum_{\ell=0}^{\lceil\log_2 n\rceil}2^{-\alpha(\ell-1)} \cdot 2^\ell\\
&=8\sum_{\ell=0}^{\lceil\log_2 n\rceil} (2^{1-\alpha})^{\ell-1}\leq C_1 2^{(1-\alpha)\log_2 n}=C_1n^{(2-p)/p},
\end{align*}
and hence $\E\Big\{\max_{k\in [n]} k\Big(\frac{|S_k|}{k}\Big)^p\Big\}$ is of order $n^{1-p/2}$.
\end{proof}


\end{document}